\newcommand{\acli}[1]{\emph{\acl{#1}}}		
\newcommand{\acdefp}[1]{\emph{\aclp{#1}} \textup{(\acsp{#1})}\acused{#1}}	
\newcommand{\Acli}[1]{\emph{\Acl{#1}}}		
\colorlet{MyRed}{FireBrick!50!Crimson}
\colorlet{MyBlue}{DodgerBlue!75!black}
\colorlet{MyGreen}{DarkGreen!85!black}
\colorlet{MyViolet}{DarkMagenta!80!Blue}
\colorlet{MyLightBlue}{DodgerBlue!20}
\colorlet{MyLightGreen}{MyGreen!20}
\colorlet{PrimalColor}{MyBlue}
\colorlet{PrimalFill}{MyLightBlue}
\colorlet{DualColor}{MyRed}
\colorlet{RevColor}{MyViolet}
\colorlet{MacroColor}{DarkMagenta}
\colorlet{LinkColor}{MediumBlue}
\newcommand{\afterhead}{}		
\newcommand{\para}[1]{\medskip\paragraph{\textbf{#1\afterhead}}}
\crefname{algo}{Algorithm}{Algorithms}
\crefname{assumption}{Assumption}{Assumptions}
\crefname{case}{Case}{Cases}
\crefname{section}{Section}{Sections}
\crefname{subsection}{Section}{Sections}
\theoremstyle{plain}
\newtheorem{theorem}{Theorem}	
\newtheorem{corollary}{Corollary}	
\newtheorem{lemma}{Lemma}	
\newtheorem{proposition}{Proposition}	
\newtheorem*{theorem*}{Theorem}	
\newtheorem*{corollary*}{Corollary}	
\theoremstyle{definition}
\newtheorem{definition}{Definition}		
\newtheorem{assumption}{Assumption}		
\newtheorem{example}{Example}		
\newtheorem*{definition*}{Definition}		
\newtheorem*{assumption*}{Assumptions}		
\newtheorem*{example*}{Example}		
\theoremstyle{remark}
\newtheorem{remark}{Remark}	
\newtheorem*{remark*}{Remark}	
\newtheorem*{notation*}{Notation}	
\def\endenv{\ding{166}\medskip}	
\newcounter{proofstep}
\newcommand{\draft}[1]{#1}		
\newcommand{\revise}[1]{#1}		
\def\beginrev{}		
\newcommand{\newmacro}[2]{\newcommand{#1}{\draft{#2}}}		
\newcommand{\newop}[2]{\DeclareMathOperator{#1}{\draft{#2}}}		
\DeclarePairedDelimiter{\braces}{\{}{\}}		
\DeclarePairedDelimiter{\bracks}{[}{]}		
\DeclarePairedDelimiter{\parens}{(}{)}		
\DeclarePairedDelimiter{\abs}{\lvert}{\rvert}		
\DeclarePairedDelimiter{\pospart}{[}{]_{+}}		
\DeclarePairedDelimiterX{\setdef}[2]{\{}{\}}{#1:#2}		
\DeclarePairedDelimiterXPP{\exclude}[1]{\mathopen{}\setminus}{\{}{\}}{}{#1}
\newcommand{\R}{\mathbb{R}}		
\DeclareMathOperator*{\argmin}{arg\,min}		
\DeclareMathOperator{\aff}{aff}		
\DeclareMathOperator{\lspan}{span}		
\DeclareMathOperator{\bigoh}{\mathcal{O}}		
\DeclarePairedDelimiterXPP{\bigof}[1]{\mathcal{O}}{(}{)}{}{#1}		
\DeclareMathOperator{\dist}{dist}		
\DeclareMathOperator{\dom}{dom}		
\DeclareMathOperator{\grad}{\nabla}		
\DeclareMathOperator{\intr}{int}		
\DeclareMathOperator{\one}{\mathds{1}}		
\DeclareMathOperator{\relint}{ri}		
\newcommand{\cf}{cf.\xspace}		
\newcommand{\eg}{e.g.,\xspace}		
\newcommand{\ie}{i.e.,\xspace}		
\newcommand{\viz}{viz.\xspace}		
\newcommand{\textpar}[1]{\textup(#1\textup)}		
\newcommand{\alt}[1]{#1'}		
\newcommand{\altalt}[1]{#1''}		
\newmacro{\dd}{\:d}		
\newcommand{\eps}{\varepsilon}		
\newmacro{\const}{c}		
\newmacro{\Const}{\rho}		
\newmacro{\coefalt}{\mu}		
\NewDocumentCommand{\coef}{O{\lambda}}{\draft{#1}}
\newmacro{\param}{\theta}		
\newmacro{\params}{\Theta}		
\newmacro{\pexp}{p}		
\newmacro{\qexp}{q}		
\newmacro{\rexp}{r}		
\newmacro{\radius}{r}
\newmacro{\beforestart}{0}		
\newmacro{\start}{1}		
\newmacro{\afterstart}{2}		
\newmacro{\running}{\start,\afterstart,\dotsc}		
\newmacro{\halfrunning}{1,3/2,2\dotsc}		
\newmacro{\run}{t}		
\newmacro{\runalt}{s}		
\newmacro{\runaltalt}{\tau}		
\newmacro{\nRuns}{T}		
\newmacro{\runs}{\mathcal{\nRuns}}		
\newmacro{\state}{x}		
\newmacro{\statealt}{y}		
\newmacro{\statealtalt}{z}		
\newcommand{\new}[1][\point]{#1^{+}}		
\newcommand{\beforeinit}[1][\state]{\draft{#1}_{\beforestart}}		
\newcommand{\init}[1][\state]{\draft{#1}_{\start}}		
\newcommand{\afterinit}[1][\state]{\draft{#1}_{\afterstart}}		
\newcommand{\iter}[1][\state]{\draft{#1}_{\runalt}}		
\newcommand{\iterlead}[1][\state]{\draft{#1}_{\runalt+1/2}}		
\newcommand{\prev}[1][\state]{\draft{#1}_{\run-1}}		
\newcommand{\curr}[1][\state]{\draft{#1}_{\run}}		
\renewcommand{\next}[1][\state]{\draft{#1}_{\run+1}}		
\newcommand{\beforelead}[1][\state]{\draft{#1}_{\run-1/2}}		
\newcommand{\lead}[1][\state]{\draft{#1}_{\run+1/2}}		
\newmacro{\tstart}{0}		
\newmacro{\timealt}{s}		
\newmacro{\horizon}{T}		
\newmacro{\traj}{x}		
\newmacro{\trajalt}{y}		
\newmacro{\trajaltalt}{z}		
\newmacro{\flowmap}{\Theta}		
\DeclarePairedDelimiterXPP{\flowof}[2]{\flowmap_{#1}}{(}{)}{}{#2}		
\newop{\Nash}{NE}		
\newop{\CE}{CE}		
\newop{\CCE}{CCE}		
\newop{\NI}{NI}		
\newop{\brep}{br}		
\newop{\reg}{Reg}		
\newop{\preg}{\overline{Reg}}		
\newop{\val}{val}		
\newcommand{\strats}{\points}		
\newcommand{\eq}{\sol}		
\newmacro{\play}{i}		
\newmacro{\playalt}{j}		
\newmacro{\playaltlalt}{k}		
\newmacro{\nPlayers}{N}		
\newmacro{\players}{\mathcal{\nPlayers}}		
\newmacro{\pure}{\alpha}		
\newmacro{\purealt}{\beta}		
\newmacro{\purealtalt}{\gamma}		
\newmacro{\nPures}{A}		
\newmacro{\pures}{\mathcal{\nPures}}		
\newmacro{\loss}{\ell}		
\newmacro{\pay}{u}		
\newmacro{\payv}{v}		
\newmacro{\pot}{f}		
\newmacro{\game}{\mathcal{G}}		
\newmacro{\gamefull}{\game(\players,\points,\pay)}		
\newmacro{\fingame}{\Gamma}		
\newmacro{\fingamefull}{\Gamma(\players,\pures,\pay)}		
\newmacro{\gmat}{g}		
\newmacro{\gdist}{\dist_{\gmat}}
\newmacro{\mfld}{M}		
\newmacro{\form}{\omega}		
\newmacro{\tvec}{z}		
\newmacro{\uvec}{u}		
\newmacro{\ball}{\basin}		
\newmacro{\sphere}{\mathbb{S}}		
\newmacro{\graph}{\mathcal{G}}
\newmacro{\vertices}{\mathcal{V}}
\newmacro{\edges}{\mathcal{E}}
\newmacro{\mat}{A}		
\newmacro{\matalt}{c}		
\newmacro{\hmat}{H}		
\newop{\row}{row}		
\newop{\col}{col}		
\newmacro{\ones}{\mathbf{1}}		
\newmacro{\eye}{I}		
\newmacro{\zer}{\mathbf{0}}		
\DeclarePairedDelimiter{\norm}{\lVert}{\rVert}		
\DeclarePairedDelimiterXPP{\dnorm}[1]{}{\lVert}{\rVert}{_{\ast}}{#1}		
\DeclarePairedDelimiterXPP{\onenorm}[1]{}{\lVert}{\rVert}{_{1}}{#1}		
\DeclarePairedDelimiterXPP{\twonorm}[1]{}{\lVert}{\rVert}{_{2}}{#1}		
\DeclarePairedDelimiterXPP{\supnorm}[1]{}{\lVert}{\rVert}{_{\infty}}{#1}		
\DeclarePairedDelimiterX{\braket}[2]{\langle}{\rangle}{#1,#2}		
\newmacro{\vecspace}{\mathcal{V}}		
\newmacro{\subspace}{\mathcal{W}}		
\newmacro{\coord}{i}		
\newmacro{\coordalt}{j}		
\newmacro{\coordaltalt}{k}		
\newmacro{\nCoords}{n}		
\newmacro{\dims}{\nCoords}		
\newmacro{\vdim}{\nCoords}		
\newmacro{\pvec}{z}		
\newmacro{\pvecalt}{r}		
\newmacro{\bvec}{e}		
\newmacro{\bvecs}{\mathcal{E}}		
\newmacro{\cvec}{b}     
\newmacro{\cvecalt}{d}     
\newmacro{\pspace}{\mathcal{V}}		
\newmacro{\dspace}{\pspace^{\ast}}		
\newmacro{\dvec}{\dpoint}		
\newmacro{\dbvec}{\eps}		
\newmacro{\dpoint}{y}		
\newmacro{\dpointalt}{\alt\dpoint}		
\newmacro{\dpointaltalt}{\altalt\dpoint}		
\newmacro{\dpoints}{\mathcal{Y}}		
\newmacro{\dstate}{Y}		
\newmacro{\dbase}{v}		
\newcommand{\defeq}{\coloneqq}		
\newcommand{\from}{\colon}		
\newop{\Opt}{Opt}		
\newop{\Sol}{Sol}		
\newop{\gap}{Gap}		
\newop{\orcl}{Or}		
\newmacro{\tfun}{f}		
\newmacro{\obj}{f}		
\newmacro{\objalt}{g}		
\newmacro{\sobj}{F}		
\newmacro{\oper}{A}		
\newmacro{\vecfield}{g}		
\newcommand{\sol}[1][\point]{#1^{\ast}}		
\newmacro{\solvec}{\vecfield(\sol)}		
\newmacro{\solpay}{\eq[\payv]}		
\newmacro{\signal}{g}		
\newmacro{\step}{\gamma}		
\newmacro{\learn}{\eta}		
\newmacro{\vbound}{G}		
\newmacro{\lips}{L}		
\newmacro{\strong}{\mu}		
\newmacro{\smooth}{\beta}		
\newop{\cone}{cone}
\newop{\tspace}{T}		
\newop{\tcone}{TC}		
\newop{\dcone}{DC}		
\newop{\ncone}{NC}		
\newop{\pcone}{PC}		
\newop{\hull}{\Delta}		
\newmacro{\cvx}{\mathcal{C}}		
\newmacro{\subd}{\partial}		
\newcommand{\subsel}{\nabla}		
\newmacro{\minmax}{\mathcal{L}}		
\newmacro{\minvar}{\point_{1}}		
\newmacro{\minvaralt}{\alt\minvar}		
\newmacro{\minvars}{\points_{1}}		
\newmacro{\minsol}{\sol[\minvar]}		
\newmacro{\maxvar}{\point_{2}}		
\newmacro{\maxvaaltr}{\alt\maxvar}		
\newmacro{\maxvars}{\points_{2}}		
\newmacro{\maxsol}{\sol[\maxvar]}		
\newop{\Eucl}{\Pi}		
\newop{\logit}{\Lambda}		
\newop{\dkl}{KL}		
\newmacro{\hreg}{h}		
\newmacro{\hconj}{\hreg^{\ast}}		
\newmacro{\breg}{D}		
\newmacro{\mprox}{P}		
\newmacro{\mirror}{Q}		
\newmacro{\fench}{F}		
\newmacro{\hstr}{\revise{\kappa}}		
\newmacro{\depth}{H}		
\newmacro{\proxdom}{\points_{\hreg}}		
\newmacro{\zone}{\mathcal{D}}		
\newmacro{\hker}{\theta} 
\DeclarePairedDelimiterXPP{\proxof}[2]{\mprox_{#1}}{(}{)}{}{#2}		
\newmacro{\point}{x}		
\newmacro{\pointalt}{\alt\point}		
\newmacro{\pointaltalt}{\altalt\point}		
\newmacro{\points}{\mathcal{X}}		
\newmacro{\intpoints}{\relint\points}		
\newmacro{\base}{p}		
\newmacro{\basealt}{q}		
\newmacro{\basealtalt}{u}		
\newmacro{\open}{\mathcal{U}}		
\newmacro{\closed}{\mathcal{C}}		
\newmacro{\cpt}{\mathcal{K}}		
\newmacro{\nhd}{\mathcal{U}}		
\newmacro{\nhdalt}{\nhd}		
\newop{\ex}{\mathbb{E}}		
\newop{\prob}{\mathbb{P}}		
\newop{\Var}{Var}		
\newop{\simplex}{\hull}		
\DeclarePairedDelimiterXPP{\exof}[1]{\ex}{[}{]}{}{
 #1}
\DeclarePairedDelimiterXPP{\probof}[1]{\prob}{(}{)}{}{
 #1}
\DeclarePairedDelimiterXPP{\oneof}[1]{\one}{\{}{\}}{}{
 #1}
\newmacro{\sample}{\omega}		
\newmacro{\samples}{\Omega}		
\newmacro{\filter}{\mathcal{F}}		
\newmacro{\probspace}{(\samples,\filter,\prob)}		
\newmacro{\event}{E}       
\newmacro{\eventalt}{H}       
\newmacro{\mean}{\mu}		
\newmacro{\sdev}{\sigma}		
\newmacro{\variance}{\sdev^{2}}		
\newmacro{\proper}{\tau}		
\newmacro{\error}{Z}		
\newmacro{\noise}{U}		
\newmacro{\bias}{b}		
\newmacro{\brown}{W}		
\newmacro{\serror}{\theta}		
\newmacro{\snoise}{\xi}		
\newmacro{\sbias}{\psi}		
\newmacro{\sbound}{M}		
\newmacro{\bbound}{B}		
\newmacro{\noisepar}{\sdev}		
\newmacro{\noisevar}{\variance}		
\newmacro{\leg}{\alpha}		
\newcommand{\expleg}[1][{\sol[\legexp]}]{\frac{#1}{1-{#1}}}
\newmacro{\legexp}{\beta}		
\newmacro{\legsol}{\sol[\legexp]}		
\newmacro{\legconst}{\revise{K}}		
\newmacro{\legnhd}{\mathcal{U}}		
\DeclarePairedDelimiterXPP{\legof}[1]{\legexp_{\hreg}}{(}{)}{}{#1}
\newmacro{\bregexp}{\alpha} 
\newmacro{\bregcst}{M} 
\newmacro{\kernelcst}{C}
\newmacro{\kernelexp}{p}
\newmacro{\bregbdedcst}{R}
\newmacro{\constr}{j}		
\newmacro{\nConstr}{m}		
\newmacro{\slack}{\nu}		
\newmacro{\actcoords}{\mathcal{A}}
\newmacro{\flatcoords}{\actcoords_{\flat}}
\newmacro{\sharpcoords}{\actcoords_{\sharp}}
\newmacro{\drift}{\slack^{\ast}}
\newmacro{\drifteff}{\slack_{\mathrm{eff}}}
\newcommand{\sharps}{\sharpcoords}		
\newcommand{\flats}{\flatcoords}		
\newmacro{\coords}{\mathcal{I}}
\newmacro{\polycst}{P}
\newmacro{\intcst}{\sol[\coef]}
\newmacro{\intcstalt}{\widetilde{\coef}}
\newmacro{\expstep}{\eta}
\newmacro{\cst}{q} 
\newmacro{\cstalt}{q'} 
\newmacro{\seq}{u} 
\newmacro{\seqalt}{b}
\newcommand{\WA}{\WAmargincomment}
\newcommand{\half}[1][1]{{\frac{#1}{2}}}
\newcommand{\refinapp}[2]{\revise{\cref{#1} in \cref{#2}}}
\newcommand{\refapp}[2][in]{\revise{#1 \cref{#2}}}
\newcommand{\PM}{\PMmargincomment}
\newmacro{\region}{\mathcal{R}}
\newmacro{\fn}{f}		
\newmacro{\fixmap}{F}		
\newmacro{\fixmapalt}{G}		
\newmacro{\gold}{\varphi}		
\newmacro{\silver}{\Phi}		
\newmacro{\energy}{E}		
\newmacro{\Lyap}{L}		
\newmacro{\diff}{\alpha} 
\newmacro{\thres}{\delta}		
\newmacro{\basin}{\mathcal{B}}		
\newmacro{\inhd}{\init[\nhd]}
\newmacro{\seed}{\theta}		
\newmacro{\seeds}{\Theta}		
\newmacro{\pdist}{P}		
\newmacro{\history}{\mathcal{H}}		
\begin{document}


\title
[The rate of convergence of Bregman proximal methods]
{The rate of convergence of Bregman proximal methods:\\
Local geometry vs. regularity vs. sharpness}

\author
[W.~Azizian]
{Waïss Azizian$^{c,\ast}$}
\address{$^{c}$\,%
Corresponding author.}
\address{$^{\ast}$\,%
Univ. Grenoble Alpes, CNRS, Inria, Grenoble INP, LJK, 38000 Grenoble, France.}
\email[Corresponding author]{waiss.azizian@univ-grenoble-alpes.fr}
\author
[F.~Iutzeler]
{Franck Iutzeler$^{\sharp}$}
\address{$^{\sharp}$\,%
Institut de Mathématiques de Toulouse,  Université de Toulouse,  CNRS, UPS, 31062, Toulouse, France.}
\email{franck.iutzeler@math.univ-toulouse.fr}
\author
[J.~Malick]
{\\Jérôme Malick$^{\ast}$}
\email{jerome.malick@cnrs.fr}
\author
[P.~Mertikopoulos]
{Panayotis Mertikopoulos$^{\diamond}$}
\address{$^{\diamond}$\,%
Univ. Grenoble Alpes, CNRS, Inria, Grenoble INP, LIG, 38000 Grenoble, France.}
\email{panayotis.mertikopoulos@imag.fr}
\thanks{The authors are grateful to J.~Bolte for many fruitful discussions.}

\subjclass[2020]{%
Primary 65K15, 90C33;
secondary 68Q25, 68Q32.}

\keywords{%
Bregman proximal methods;
convergence speed;
Legendre exponent;
variational inequalities}

\newacro{LHS}{left-hand side}
\newacro{RHS}{right-hand side}
\newacro{iid}[i.i.d.]{independent and identically distributed}
\newacro{lsc}[l.s.c.]{lower semi-continuous}
\newacro{NE}{Nash equilibrium}
\newacroplural{NE}[NE]{Nash equilibria}

\newacro{ABP}{abstract Bregman proximal}
\newacro{BP}{Bregman proximal}
\newacro{BPM}{Bregman proximal method}

\newacro{DGF}{distance-generating function}
\newacro{EG}{extra-gradient}
\newacro{MP}{mirror-prox}
\newacro{MD}{mirror descent}
\newacro{OMD}{optimistic mirror descent}
\newacro{OMWU}{optimistic multiplicative weights update}
\newacro{PMP}{past mirror-prox}
\newacro{AMP}{abstract mirror-prox}
\newacro{MPT}{mirror-prox template}

\newacro{VI}{variational inequality}
\newacroplural{VI}[VIs]{variational inequalities}
\newacro{VIP}{variational inequality problem}
\newacro{KKT}{Karush\textendash Kuhn\textendash Tucker}
\newacro{FOS}{first-order stationary}
\newacro{SOO}{second-order optimality}
\newacro{SOS}{second-order sufficient}
\newacro{DGF}{distance-generating function}
\newacro{SFO}{stochastic first-order oracle}

\maketitle
\begin{abstract}
%
%
We examine the last-iterate convergence rate of \aclp{BPM} \textendash\ from \acl{MD} to \acl{MP} and its optimistic variants \textendash\ as a function of the local geometry induced by the prox-mapping defining the method.
For generality, we focus on local solutions of constrained, non-monotone \aclp{VI}, and we show that the convergence rate of a given method depends sharply on its associated \emph{Legendre exponent}, a notion that measures the growth rate of the underlying Bregman function (Euclidean, entropic, or other) near a solution.
In particular, we show that boundary solutions exhibit a stark separation of regimes between methods with a zero and non-zero Legendre exponent:
the former converge at a linear rate, while the latter converge, in general, sublinearly.
This dichotomy becomes even more pronounced in linearly constrained problems where methods with entropic regularization achieve a linear convergence rate along sharp directions, compared to convergence in a finite number of steps under Euclidean regularization.
\end{abstract}

\acresetall		

\section{Introduction}
\label{sec:introduction}

\Acp{BPM} have a long and rich history in optimization, going back at least to the introduction of \acl{MD} by Nemirovski \& Yudin \citep{NY83}.
In plain terms, \acp{BPM} are first-order (constrained) optimization algorithms that forego Euclidean projections in favor of a more sophisticated ``prox-mapping'' that minimizes a certain distance-like functional known as the Bregman divergence \citep{NY83,CT93,Bre67,Kiw97}.
When this Bregman divergence is the Euclidean distance squared, one recovers the standard projection-based methods;
other than that, depending on the problem's feasible region, different Bregman setups lead to a diverse collection of algorithms,
from exponentiated gradient descent on the simplex \citep{NY83,BecTeb03,ACBFS02},
to matrix multiplicative weights on the positive-semidefinite cone \cite{TRW05,KSST12},
variants of Karmarkar's affine scaling algorithm for linear programs \cite{VMF86},
etc.

One of the most appealing features of \acp{BPM} is that they achieve almost dimension-free convergence rates in problems with a convex structure and a favorable geometry \textendash\ such as the $L^{1}$ ball, the spectraplex, second-order cones, etc. \cite{Bub15,Nes09,BecTeb03}.
This is owed to a delicate interplay between the algorithms' non-Euclidean update scheme and the global geometry of the problem's domain.
However, these (almost) dimension-free guarantees also come with some strings attached:
they do not concern the sequence of iterates generated by the method, but only its time average
\revise{(or, through the same, ``regret-based'' analysis, the method's ``best iterate'')};
in this way, the best guarantee that can be achieved after $\run$ iterations is $\bigoh(1/\run)$.

In terms of oracle complexity, this is sufficient for problems that are not strongly convex\,/\,strongly monotone, but if one targets finer, geometric convergence rates,
\revise{the inherent averaging involved in regret-based guarantees is hard to compensate.}
And, on the other extreme, if the problem is not convex\,/\,monotone to begin with, iterate averaging does not provide any quantifiable benefits whatsoever, so it becomes crucial to study the actual trajectory of the method.

\para{Our contributions}

Our paper seeks to quantify the last-iterate convergence rate of \aclp{BPM} as a function of the Bregman divergence defining the method and the local geometry that it induces.
To treat this question in as general a manner as possible, we focus on \ac{VI} problems of the form
\begin{equation}
\label{eq:VI}
\tag{VI}
\text{Find $\sol\in\points$ such that}
	\;\;
	\braket{\vecfield(\sol)}{\point - \sol}
	\geq 0
	\;\;
	\text{for all $\point\in\points$},
\end{equation}
where $\points$ is a closed convex subset of a finite-dimensional normed space $\pspace$, and $\vecfield \from \points \to \dspace$ is a (possibly non-monotone) single-valued operator on $\points$ with values in $\dspace$, the dual of $\pspace$.
This problem is a staple of many areas of mathematical programming, game theory and data science, as it provides a template for ``optimization beyond minimization'' \textendash\ \ie for problems where finding an optimal solution does not necessarily involve minimizing a loss function.
In particular, in addition to standard minimization problems \textendash\ which are recovered when $\vecfield = \nabla\obj$ for some smooth function $\obj$ \textendash\ the general formulation \eqref{eq:VI} includes saddle-point problems, games, complementarity problems, etc.;
for an introduction, see \cite{FP03} and references therein.

In this broad context, we examine the rate of convergence of a wide class of \aclp{BPM} to local solutions of \eqref{eq:VI} that satisfy a \acl{SOS} condition.
Specifically, the class of algorithms we consider includes as special cases
\begin{enumerate*}
[(\itshape i\hspace*{1pt}\upshape)]
\item
the original \acf{MD} algorithm of \cite{NY83};
\item
the \acf{MP} method of Nemirovski \cite{Nem04} \textendash\ which has the same update structure as the Bregman-based algorithm of \cite{AT05} and contains as a special case the \acf{EG} algorithm of \cite{Kor76};
\item
the so-called \acf{OMD} method of \cite{RS13-NIPS} \textendash\ itself a Bregman analogue of the modified Arrow-Hurwicz algorithm of \cite{Pop80};
\end{enumerate*}
etc.

Our first finding is a crisp characterization of last-iterate convergence rate of \acp{BPM} in terms of the local geometry induced by the underlying Bregman function near a given solution of \eqref{eq:VI}.
We make this dependence precise via the notion of the \emph{Legendre exponent}, a regularity measure for Bregman methods due to \cite{AIMM21}, which can roughly be described as the logarithmic ratio of the volume of a Euclidean ball to that of a Bregman ball of the same radius.
For example, Euclidean methods have a Legendre exponent of $\legexp = 0$ and they converge at a linear rate;
entropic methods have a Legendre exponent of $\legexp = 1/2$ at boundary points, and they converge at a rate of $\bigoh(\run^{-1})$;
more generally,
as we show in \cref{thm:general}, methods with a Legendre exponent $\legexp>0$ converge at a rate of $\bigoh(\run^{1-1/\legexp})$.
\PM{We need to fix this: the $1-1/\legexp$ exponent is not consistent with the $\bigoh(1/\run)$ expression.}
\WA{I don't see the issues, yes this expression is not well-defined for $\legexp = 0$ but this is normal, the two situations differ radically.}
The Euclidean regime ($\legexp = 0$) is perfectly aligned with existing results for the geometric last-iterate convergence rate of the \ac{EG} algorithm and its variants \citep{GBVV+19,Mal15,HIMM19,MOP20}.
By contrast, the Legendre regime ($\legexp > 0$) indicates a significant drop in the algorithm's last-iterate convergence speed, even though ergodic convergence rates \cite{Nes04} and results for bilinear games \cite{WLZL21} might suggest otherwise.

Subsequently, motivated by applications to game theory and linear programming, we take a closer look at the convergence rate of \acp{BPM} across the constraints that are active at a solution $\sol$ of \eqref{eq:VI} depending on the position of $\vecfield(\sol)$ relative to said constraints. 
This analysis reveals that Bregman proximal methods have a particularly fine structure:
along \emph{sharp directions} (\ie constraints along which $\vecfield(\sol)$ is strictly inward-pointing), \acp{BPM} converge
\begin{enumerate*}
[(\itshape i\hspace*{1pt}\upshape)]
\item
at a rate of $\bigoh(1/\run^{1/(2\legexp-1)})$ if $1/2 < \legexp < 1$;
\item
at a \emph{geometric rate} if $0 < \legexp \leq 1/2$ (\eg for entropic methods);
and
\item
in a \emph{finite} number of iterations if $\legexp=0$
\end{enumerate*}
(\cf \cref{thm:sharp}).
Thus, even though the estimates of \cref{thm:general} are, in general tight, the actual convergence rate of a Bregman method along different coordinates\,/\,constraints could be starkly different \textendash\ and, in fact, dramatically faster if the solution under study is itself sharp.

The closest antecedent of our work is the conference paper \cite{AIMM21} where the Legendre exponent was introduced to analyze the convergence of \ac{OMD} in \emph{stochastic} \ac{VI} problems (without considering sharp directions and/or faster identification rates).
The stochastic and deterministic settings are obviously very different, both in the challenges involved as well as the rates obtained, so there is no overlap in our analysis and results.
Other than that, we are not aware of any comparable results in the literature concerning the radically different convergence landscape of \acp{BPM} along active and inactive constraints.

\section{Problem setup and preliminaries}
\label{sec:setup}

In the rest of our paper
$\pspace$ will denote an $\nCoords$-dimensional real space with norm $\norm{\cdot}$
and
$\points$ will be a closed convex subset thereof.
We will also write
$\dpoints \defeq \dspace$ for the dual of $\pspace$,
$\braket{\dpoint}{\point}$ for the canonical pairing between $\dpoint\in\dpoints$ and $\point\in\pspace$,
and
$\dnorm{\dpoint} \defeq \max \setdef{\braket{\dpoint}{\point}}{\norm{\point}\leq 1}$ for the induced dual norm on $\dpoints$.

\subsection{Blanket assumptions}

Throughout the sequel, we will make the following assumptions for the defining vector field $\vecfield\from\points\to\dpoints$ of \eqref{eq:VI} and the solution $\sol\in\points$ under study:

\begin{assumption}
[Lipschitz continuity]
\label{asm:Lipschitz}
The vector field $\vecfield$ is \emph{$\lips$-Lipschitz continuous}, \ie
\begin{equation}
\label{eq:Lipschitz}
\tag{LC}
\dnorm{\vecfield(\pointalt) - \vecfield(\point)}
	\leq \lips \norm{\pointalt - \point}
	\quad
	\text{for all $\point,\pointalt\in\points$}.
\end{equation}
\end{assumption}

\begin{assumption}
[Second-order sufficiency]
\label{asm:strong}
There exists
a convex neighborhood $\basin$ of $\sol$ in $\points$
and
a positive constant $\strong > 0$
such that
\begin{equation}
\label{eq:strong}
\tag{SOS}
\braket{\vecfield(\point) - \vecfield(\sol)}{\point - \sol}
	\geq \strong \norm{\point - \sol}^{2}
	\quad
	\text{for all $\point\in\basin$}.
\end{equation}
\end{assumption}

In general, \Cref{asm:strong} guarantees that $\sol$ is the unique solution of \eqref{eq:VI} in $\basin$;
we illustrate this in two special cases of interest:
\begin{enumerate}
\item
\emph{Minimization problems:}
suppose that $\vecfield = \nabla\obj$ for some Lipschitz smooth objective function $\obj$ on $\points$.
Then, \cref{asm:strong} implies that $\obj$ grows (at least) quadratically along every ray emanating from $\sol$, \ie $\obj(\point) - \obj(\sol) \geq \braket{\subsel\obj(\sol)}{\point - \sol} + (\strong/2) \norm{\point - \sol}^{2} = \Omega(\norm{\point-\sol}^{2})$ for all $\point\in\basin$, implying in particular that $\sol$ is an isolated minimizer of $\obj$.
\item
\emph{Min-max problems:}
suppose that $\points$ factorizes as $\points = \minvars\times\maxvars$ for suitable factor sets $\minvars$, and $\maxvars$,
let $\minmax\from\points\to\R$ be a smooth function on $\points$,
and
write $\vecfield = (\nabla_{\minvar}\minmax,-\nabla_{\maxvar}\minmax)$ for the min-max gradient of $\minmax$ (with respect to $\minvar\in\minvars$ and $\maxvar\in\maxvars$ respectively).
Then, any solution $\sol = (\minsol,\maxsol)$ of \eqref{eq:VI} that satisfies \cref{asm:strong} enjoys the local growth bounds $\minmax(\minvar,\maxsol) - \minmax(\minsol,\maxsol) = \Omega(\norm{\minvar - \minsol}^{2})$ and $\minmax(\minsol,\maxsol) - \minmax(\minsol,\maxvar) = \Omega(\norm{\maxvar - \maxsol}^{2})$, implying in turn that $\sol$ is an isolated, hyperbolic saddle-point of $\minmax$.
\end{enumerate}
More examples satisfying \eqref{eq:strong} include strict \aclp{NE} in finite games \cite{FT91}, deterministic Nash policies in (generic) stochastic games \cite{SV15}, etc.
Overall, \cref{asm:Lipschitz,asm:strong} apply to a very wide range of problems, so we will treat them as blanket assumptions throughout.

\subsection{\acl{BP} methods}
\label{subsec:BP_methods}

As we discussed in the introduction, the main algorithmic template that we will examine for solving \eqref{eq:VI} is a general class of first-order algorithms known as \acdefp{BPM}.
The defining ingredient of this class is the notion of \emph{Bregman regularizer}, which we define below as follows:

\begin{definition}
[Bregman regularizers and related notions]
\label{def:Bregman}
A proper, \acl{lsc}, strictly convex function $\hreg\from\pspace\to\R\cup\{\infty\}$ is a \emph{Bregman regularizer} on $\points$ if
the following are true
\begin{enumerate}
\item
$\hreg$ is supported on $\points$, \ie $\dom\hreg = \points$.
\item
The subdifferential of $\hreg$ admits a \emph{continuous selection}, \ie there exists a continuous mapping $\subsel\hreg\from\dom\subd\hreg\to\dpoints$ such that $\subsel\hreg(\point) \in \subd\hreg(\point)$ for all $\point\in\dom\subd\hreg$.
\item
$\hreg$ is \emph{locally strongly convex} relative to $\norm{\cdot}$, \ie for any compact set $\cpt \subseteq \dom \hreg$, we have
\begin{equation}
\label{eq:hstr}
\hreg(\pointalt)
	\geq \hreg(\point)
		+ \braket{\subsel\hreg(\point)}{\pointalt - \point}
		+ \tfrac{\hstr}{2} \norm{\pointalt - \point}^{2}
\end{equation}
for some $\hstr>0$ and for all $\point\in \cpt \cap \dom\subd\hreg$, $\pointalt\in \cpt$. 
\end{enumerate}
\noindent
The set $\proxdom \defeq \dom\subd\hreg$ will be referred to as the \emph{prox-domain} of $\hreg$.
In addition, we also define the \emph{Bregman divergence} of $\hreg$ as
\begin{alignat}{2}
\label{eq:Breg}
\breg(\base,\point)
	&= \hreg(\base)
		- \hreg(\point)
		- \braket{\subsel\hreg(\point)}{\base - \point}
	&\qquad
	&\text{for all $\point\in\proxdom$, $\base\in\points$}
\intertext{and the induced \emph{prox-mapping} as}
\label{eq:prox}
\proxof{\point}{\dvec}
	&= \argmin\nolimits_{\pointalt\in\points} \{ \braket{\dvec}{\point - \pointalt} + \breg(\pointalt,\point) \}
	&\qquad
	&\text{for all $\point\in\proxdom$, $\dvec\in\dpoints$}.
\end{alignat}
\end{definition}

\begin{remark}
\label{rem:Bregman}
Examples of Bregman regularizers are given in \cref{sec:examples}, where we also take an in-depth look at their properties.
For our analysis and results, we will assume for convenience that $\hreg$ is $1$-strongly convex in a suitable neighborhood $\zone$ of $\sol$ which will be understood from the context;
this can always be achieved by rescaling $\hreg$, so there is no loss of generality.
To avoid technicalities, we will also tacitly assume that $\proxof{\point}{\dpoint}$ is well-defined whenever it is invoked (this is always the case if, for example, $\hreg$ is coercive or $\nabla\hreg$ is invertible).
\end{remark}

Given a Bregman regularizer on $\points$, the general class of \acdefp{BPM} that we will consider is defined via the generic recursion
\begin{equation}
\label{eq:BPM}
\tag{BPM}
\begin{aligned}
\lead
	= \proxof{\curr}{-\curr[\step]\curr[\signal]}
	\qquad
\next
	= \proxof{\curr}{-\curr[\step]\lead[\signal]}
\end{aligned}
\end{equation}
where
\begin{enumerate*}
[(\itshape i\hspace*{1pt}\upshape)]
\item
$\run=\running$ denotes the method's iteration counter;
\item
$\curr[\step] > 0$ is a (non-increasing) step-size sequence;
\item
$\curr[\signal]$ and $\lead[\signal]$ are sequences of ``oracle signals'' that we discuss in detail below.
\end{enumerate*}
In terms of vocabulary, the iterates $\curr$, $\run=\running$, will be referred to as the ``\emph{base states}'' of the method, while the ``half-iterates'' $\lead$, $\run=\running$, will be referred to as the method's ``\emph{leading states}''.
Finally, in terms of initialization, we will take for convenience $\init = \state_{1/2}$.

Now, regarding the sequence of oracle signals $\curr[\signal]$ and $\lead[\signal]$ defining \eqref{eq:BPM}, we will assume throughout that
\begin{equation}
\label{eq:signal-lead}
\lead[\signal]
	= \vecfield(\lead)
	\qquad
	\text{for all $\run=\running$}
\end{equation}
\ie \eqref{eq:BPM} generates a new base state $\next$ by taking a Bregman proximal step from $\curr$ with oracle input from the leading state $\lead$.
By contrast, the leading state itself can be generated in a number of different ways from $\curr$, depending on the definition of $\curr[\signal]$:

\begin{assumption}
\label{asm:signal-base}
For all $\run=\running$, the oracle signal $\curr[\signal]$ is of the form:
\begin{equation}
\label{eq:signal-base}
\curr[\signal]
	= \coef[a] \vecfield(\curr)
		+ \coef[b] \vecfield(\beforelead)
\end{equation}
for some $\coef[a],\coef[b] \in [0,1]$ with
$\coef[a] + \coef[b] \leq 1$
and
$\coef[a] + \coef[b] = 1$ if $\coef[b]>0$.\footnote{Note that the requirement ``$\coef[a] + \coef[b] = 1$ if $\coef[b] > 0$'' is only intended to ease notation and does not lead to a loss in generality:
if $\coef[b] > 0$, we can always rescale $\curr[\step]$ by $\coef[a] + \coef[b]$ so the condition $\coef[a] + \coef[b] = 1$ is satisfied automatically.}
\end{assumption}

For concreteness, we illustrate below three archetypal Bregman methods that serve as the backbone of the above framework:

\begin{enumerate}
\setlength{\itemsep}{0pt}
\item
\Acli{MD}:
following \cite{NY83,BecTeb03,NJLS09}, the \acli{MD} algorithm proceeds recursively as $\new = \proxof{\point}{-\step\vecfield(\point)}$, so it can be recovered from \eqref{eq:BPM} by taking
\begin{alignat}{3}
\label{eq:MD}
\tag{MD}
\coef[a] = 0, \coef[b] = 0
	&\qquad
	\text{or, equivalently}
	&\qquad
\curr[\signal]
	&= 0
	&\quad
	\text{for all $\run=\running$}
\intertext{%
\item
\Acli{MP}:
following \cite{Nem04,JNT11}, the \acli{MP} algorithm corresponds to the choice}
\label{eq:MP}
\tag{MP}
\coef[a] = 1, \coef[b] = 0
	&\qquad
	\text{or, equivalently}
	&\qquad
\curr[\signal]
	&= \vecfield(\curr)
	&\quad
	\text{for all $\run=\running$}
\intertext{%
\item
\Acli{OMD}:
originally due to \cite{Pop80} (in the Euclidean case) and \cite{CYLM+12,RS13-NIPS} (for the general case), the \acli{OMD} algorithm is obtained by setting}
\label{eq:OMD}
\tag{OMD}
\coef[a] = 0, \coef[b] = 1
	&\qquad
	\text{or, equivalently}
	&\qquad
\curr[\signal]
	&= \vecfield(\beforelead)
	&\quad
	\text{for all $\run=\running$}
\end{alignat}
\end{enumerate}
These three algorithms are the most widely studied Bregman methods in the literature, so we will use them as running examples throughout.

\section{Motivating examples}
\label{sec:examples}

We now proceed to take a closer look at some commonly used Bregman regularizers (and the induced prox-mappings) with the goal of determining the rate of convergence of the associated Bregman method.
For concreteness, we focus on one-dimensional problems where $\points$ is \revise{the closed interval $[0,\infty)$ or $[-1, 1]$} and $\vecfield$ is the affine vector field
\begin{equation}
\label{eq:simple}
\vecfield(\point)
	= \point - \sol,
	\quad
	\point\in\R,
\end{equation}
for different choices of $\sol\in\R$
\revise{(typically a boundary point of $\points$)}.
To streamline our presentation, we will only examine the \acl{MD} recursion \eqref{eq:MD} with constant step-size schedules $\curr[\step] \equiv \step$ for some $\step>0$.
In this case, we obtain the scheme
\begin{equation}
\label{eq:MD-generic}
\next
	= \fixmap(\curr)
	\quad
	\text{with}
	\quad
\fixmap(\point)
	= \proxof{\point}{-\step\vecfield(\point)},
\end{equation}
and we will examine the convergence speed of $\curr$ by analyzing the behavior of $\fixmap$ near $\sol$.
\revise{To illustrate the spectrum of different behaviors that arise near the boundary of $\points$, we will focus primarily on cases where $\sol$ is a boundary point.}
\smallskip


\begin{example}
[Euclidean regularization]
\label{ex:Eucl}
We begin with the quadratic regularizer $\hreg(\point) = \point^{2}/2$ for $\point\in\points = [0,\infty)$.
In this case, noting that $\hreg'(\point) = \point$, we have:
\begin{flalign}
\label{eq:mirror-Eucl}
\begin{alignedat}{3}
\quad
	a)\;\;
	&\text{Prox-domain:}
	&\qquad
&\proxdom
	= \points
	&\\
\quad
	b)\;\;
	&\text{Bregman divergence:}
	&\qquad
&\breg(\base,\point)
	= (\base-\point)^{2}/2
	&\\
\quad
	c)\;\;
	&\text{Prox-mapping:}
	&\qquad
&\proxof{\point}{\dvec}
	= \pospart{\point+\dvec}
	&
\end{alignedat}
&&
\end{flalign}
Consider now the case $\sol=0$, \ie $\vecfield(\point) = \point$.
Then, for $\step\in(0,1)$, the update \eqref{eq:MD-generic} becomes
\begin{equation}
\label{eq:MD-Eucl}
\fixmap(\point)
	= \point - \step\point
	= (1-\step) \point
    \quad
    \text{\revise{for all $\point\geq0$}}
\end{equation}
\ie $\fixmap$ is contracting.
We thus conclude that $\curr$ converges to $\sol=0$ at a geometric rate, \viz
\begin{equation*}
\tag*{\endenv}
\breg(\sol,\curr)
	= \tfrac{1}{2} \curr^{2}
	= \Theta\parens[\big]{(1-\step)^{2\run}}
	\;\;
	\text{or, in absolute value,}
	\;\;
\abs{\curr - \sol}
	= \Theta\parens{(1-\step)^{\run}}.
\end{equation*}
\end{example}



\begin{example}
[Entropic regularization]
\label{ex:ent}
Another popular choice when $\points=[0,\infty)$ is the entropic regularizer $\hreg(\point) = \point\log\point$ \cite{BecTeb03,SS11,BBT17}.
In this case, we have $\hreg'(\point) = 1 + \log\point$, and hence:
\begin{flalign}
\label{eq:mirror-ent}
\begin{alignedat}{3}
\quad
	a)\;\;
	&\text{Prox-domain:}
	&\qquad
&\proxdom
	= \relint\points
	= (0,\infty)
	&\\
\quad
	b)\;\;
	&\text{Bregman divergence:}
	&\qquad
&\breg(\base,\point)
	= \base \log(\base/\point) + \point - \base
	&\\
\quad
	c)\;\;
	&\text{Prox-mapping:}
	&\qquad
&\proxof{\point}{\dvec}
	= \point \exp(\dvec).
	&
\end{alignedat}
&&
\end{flalign}
Now, taking $\vecfield(\point) = \point$ as in the previous example, the update rule \eqref{eq:MD-generic} becomes
\begin{equation}
\label{eq:MD-ent}
\fixmap(\point)
	= \point \exp(-\step\point)
	= \point(1 - \step\point  + o(\point))
	= \point - \step\point^2  + o(\point^2)
	\quad
	\text{as $\point\to0$}.
\end{equation}
In contrast to \eqref{eq:MD-Eucl}, we now have $\fixmap(\point) \sim \point$ instead of $(1-\step)\point$, so $\fixmap$ is no longer a contraction.
Instead, the iterates of \eqref{eq:MD-ent} may be analyzed by means of the following lemma:
\begin{restatable}{lemma}{basicnum}
\label{lem:basicnum}
Suppose that $\fn\from\R_+\to\R_+$ admits the asymptotic expansion
\begin{equation}
\fn(\point)
	= \point
		- \coef\point^{1+\rexp}
		+ o(\point^{1+\rexp})
	\quad
	\text{as $\point\to0$}
\end{equation}
for positive constants $\coef,\rexp>0$.
Then, for $\init[\seq] > 0$ small enough, the sequence $\next[\seq] = \fn(\curr[\seq])$, $\run=\running$, converges to $0$ at a rate of $\curr[\seq] \sim (\coef\rexp\run)^{-1/\rexp}$.
\end{restatable}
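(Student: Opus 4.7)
The plan is to reduce the nonlinear recursion to an almost-linear one by the standard change of variables $v_\runalt = \seq_\runalt^{-\rexp}$, and then conclude via a Stolz--Cesàro argument.

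First, I would establish that for $\init[\seq] > 0$ small enough the iterates stay positive, are strictly decreasing, and tend to $0$. Since $\fn(\point) = \point - \coef \point^{1+\rexp} + o(\point^{1+\rexp})$, for any $\eps \in (0,\coef)$ there is a $\delta > 0$ such that $0 < \fn(\point) < \point$ whenever $0 < \point \leq \delta$; choosing $\init[\seq] \leq \delta$ gives a monotone, bounded-below sequence $(\curr[\seq])$ whose limit is a nonnegative fixed point of $\fn$, hence $0$.

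Next, I would analyze $v_\runalt \defeq \curr[\seq]^{-\rexp}$. Writing $\next[\seq] = \curr[\seq](1 - \coef \curr[\seq]^{\rexp} + o(\curr[\seq]^{\rexp}))$ and raising to the power $-\rexp$,
\begin{equation*}
v_{\runalt+1}
    = \curr[\seq]^{-\rexp} \bracks[\big]{1 - \coef \curr[\seq]^{\rexp} + o(\curr[\seq]^{\rexp})}^{-\rexp}
    = \curr[\seq]^{-\rexp} \bracks[\big]{1 + \rexp \coef \curr[\seq]^{\rexp} + o(\curr[\seq]^{\rexp})}
    = v_\runalt + \rexp\coef + o(1),
\end{equation*}
where I used the expansion $(1-\eps)^{-\rexp} = 1 + \rexp\eps + o(\eps)$, valid because $\curr[\seq] \to 0$. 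Thus $v_{\runalt+1} - v_\runalt \to \rexp\coef$ as $\runalt \to \infty$.

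Finally, by the Stolz--Cesàro theorem applied to the sequence $(v_\runalt)$, this implies $v_\runalt / \runalt \to \rexp\coef$, or equivalently $\curr[\seq]^{-\rexp} \sim \rexp\coef\runalt$, which gives $\curr[\seq] \sim (\coef\rexp\runalt)^{-1/\rexp}$, as claimed.

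The main subtlety is the bookkeeping of the $o(\cdot)$ terms: one must check that the error in the binomial expansion $[1 - \coef \curr[\seq]^{\rexp} + o(\curr[\seq]^{\rexp})]^{-\rexp} = 1 + \rexp\coef \curr[\seq]^{\rexp} + o(\curr[\seq]^{\rexp})$ is genuinely of order $o(\curr[\seq]^{\rexp})$ (and not $o(1)$ times something larger), so that after multiplication by $\curr[\seq]^{-\rexp}$ the residual is truly $o(1)$. This relies only on the fact that $\curr[\seq] \to 0$, which was established in the first step, and on a single application of Taylor expansion for $t \mapsto (1+t)^{-\rexp}$. Beyond that, everything is elementary; the rest of the argument does not require any further assumptions on $\fn$ other than the stated asymptotic expansion.
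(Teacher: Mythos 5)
Your proof is correct and follows essentially the same route as the paper: the key step in both is the change of variables $v_\runalt = \curr[\seq]^{-\rexp}$, the binomial expansion $(1 - \coef\curr[\seq]^{\rexp} + o(\curr[\seq]^{\rexp}))^{-\rexp} = 1 + \rexp\coef\curr[\seq]^{\rexp} + o(\curr[\seq]^{\rexp})$, and a Cesàro-type argument to pass from $v_{\runalt+1} - v_\runalt \to \rexp\coef$ to $v_\runalt \sim \rexp\coef\runalt$. The only minor difference is in the preliminary step establishing $\curr[\seq]\to 0$: you argue via monotone convergence (your fixed-point phrasing tacitly assumes continuity of $\fn$, but this is easily repaired by noting that $\fn(\point) \le \point - \tfrac{\coef}{2}\point^{1+\rexp}$ on $(0,\delta]$ forces the limit to be $0$), whereas the paper sandwiches $\fn$ between $\point - 2\coef\point^{1+\rexp}$ and $\point - \tfrac{\coef}{2}\point^{1+\rexp}$ and invokes the Polyak-type recursion lemma (\cref{lem:Polyak}) to get both convergence and positivity; the two variants are interchangeable.
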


Thanks to this lemma (which we prove \refapp{app:aux}), we readily conclude that $\curr$ converges to $0$ at a rate of
$\breg(\sol,\curr)
	= \curr
	= \abs{\curr - \sol}
	\sim 1/(\step\run).$
\hfill
\endenv
\end{example}



\begin{example}
[Fractional power]
\label{ex:frac}
Take $\points = [0,\infty)$ and $\vecfield(\point) = \point$ as in \cref{ex:Eucl,ex:ent} above.
Then, for a given $\qexp>0$, $\qexp\neq1$, the \emph{fractional power} regularizer \textendash\ or \emph{Tsallis entropy} \textendash\ on $\points$ is defined as $\hreg(\point) = [\qexp(1-\qexp)]^{-1} (\point - \point^{\qexp})$ \citep{Tsa88,ABB04,MS16}.
For this choice of regularizer, we have $\hreg'(\point) = (1 - \qexp\point^{\qexp-1}) / [\qexp(1-\qexp)]$, and a series of direct calculations gives:%
\footnote{Strictly speaking, the expression we provide for $\proxof{\point}{\dvec}$ is only valid when $\dvec < \point^{\qexp-1}/(1-\qexp)$.
\revise{The reason for this is that the} prox-mapping $\proxof{\point}{\dvec}$ is not well-defined for all values of $\dvec$;
this detail is not important in the calculations that follow, so we disregard it for now.}
\begin{flalign}
\label{eq:mirror-frac}
\begin{alignedat}{3}
\quad
	a)\;\;
	&\text{Prox-domain:}
	&\qquad
&\text{$\proxdom = (0,\infty)$ if $\qexp\in(0,1)$ and $\proxdom = [0,\infty)$ if $\qexp > 1$}
	&\\
\quad
	b)\;\;
	&\text{Bregman divergence:}
	&\qquad
&\breg(\base,\point)
	= \frac{\point^{\qexp} - \base^{\qexp}}{\qexp(1-\qexp)}
		- \point^{\qexp-1} \frac{\point - \base}{1-\qexp}
	&\\
\quad
	c)\;\;
	&\text{Prox-mapping:}
	&\qquad
&\proxof{\point}{\dvec}
	= \bracks[\big]{\point^{\qexp-1} - (1-\qexp) \dvec}^{\frac{1}{\qexp-1}}
	\quad
	\text{for $\qexp\in(0,1)$}.
	&
\end{alignedat}
&&
\end{flalign}
Now, when applied to $\vecfield(\point) = \point$, the fractional power variant of \eqref{eq:MD-generic} for $\qexp\in(0,1)$ gives
\begin{equation}
\label{eq:MD-frac}
\fixmap(\point)
	= \point \, \bracks{1 + \step(1-\qexp)\point^{2-\qexp}}^{1/(\qexp-1)}
	= \point - \step\point^{3-\qexp} + o(\point^{3-\qexp})
	\quad
	\text{as $\point\to0$}.
\end{equation}
Hence, by \cref{lem:basicnum}, we conclude that $\curr$ converges to $0$ at a rate of
\begin{equation}
\tag*{\endenv}
\breg(\sol,\curr)
	= \Theta\parens[\big]{\run^{-\qexp/(2-\qexp)}}
	\;\;
	\text{or, in absolute value,}
	\;\;
\abs{\curr - \sol}
	= \Theta\parens[\big]{\run^{-1/(2-\qexp)}}.
\end{equation}
\end{example}



\begin{example}
[Hellinger distance]
\label{ex:Hell}
Our last example concerns the Hellinger regularizer $\hreg(\point) = -\sqrt{1-\point^{2}}$ on $\points = [-1,1]$.
Since $\hreg'(\point) = \point / \sqrt{1-\point^{2}}$, we readily obtain the following:
\begin{flalign}
\label{eq:mirror-Hell}
\begin{alignedat}{3}
\quad
	a)\;\;
	&\text{Prox-domain:}
	&\qquad
&\proxdom = \relint\points = (-1,1)
	&\\
\quad
	b)\;\;
	&\text{Bregman divergence:}
	&\qquad
&\breg(\base,\point)
	= \frac{1 - \base\point - \sqrt{(1-\base^{2})(1-\point^{2})}}{\sqrt{1-\point^{2}}}
	&\\
\quad
	c)\;\;
	&\text{Prox-mapping:}
	&\qquad
&\proxof{\point}{\dvec}
	= \frac{\point + \dvec\sqrt{1-\point^{2}}}{\sqrt{1-\point^{2} + (\point + \dvec\sqrt{1-\point^{2}})^{2}}}.
	&
\end{alignedat}
&&
\end{flalign}
In this case, taking $\vecfield(\point) = \point$ as per the previous examples, yields
\begin{equation}
\fixmap(\point)
	= \frac{\point - \step\point\sqrt{1-\point^{2}}}{\sqrt{1-\point^{2} + (\point - \step\point\sqrt{1-\point^{2}})^{2}}}
	\sim \point - \step\point
	\quad
	\text{as $\point\to0$},
\end{equation}
\ie $\curr$ converges to $\sol = 0$ at a geometric rate, as in \cref{ex:Eucl}.
On the other hand, if we consider the shifted operator $\vecfield(\point) = \point+1$, a somewhat tedious calculation (which we detail \revise{\refapp{app:ex}}) gives the following Taylor expansion near $\sol = -1$:
\begin{equation}
\fixmap(\point)
	= \sol
		+ (\point - \sol)
		- 2\sqrt{2}\step (\point - \sol)^{5/2} + o\parens*{(\point-\sol)^{5/2}}.
\end{equation}
Hence, by \cref{lem:basicnum}, we conclude that $\curr$ converges to $\sol = -1$ at a rate of
\begin{equation}
\tag*{\endenv}
\breg(\sol,\curr)
	= \Theta\parens{\run^{-1/3}}
	\;\;
	\text{or, in absolute value,}
	\;\;
\abs{\curr - \sol}
	= \Theta(\run^{-2/3}).
\end{equation}
\end{example}



\begin{figure}
\centering
\begin{subfigure}[b]{.45\linewidth}
\resizebox{\textwidth}{!}{
\begin{tikzpicture}

\definecolor{color0}{rgb}{1,1,0}

\begin{axis}[
legend pos=south east,
legend cell align={left},
legend style={fill opacity=0.8, draw opacity=1, text opacity=1, draw=white!80!black},
log basis y={10},
tick align=outside,
tick pos=left,
grid=both,
grid style={line width=.1pt, draw=gray!20},
major grid style={line width=.2pt,draw=gray!60},
xlabel={Iterations ($\run$)},
xmin=0, xmax=100,
xtick style={color=black},
ylabel={$\abs{\curr - \sol}$},
ymin=0.0005, ymax=1,
ymode=log,
ytick style={color=black}
]
\addplot [thick, black]
table {%
0 0.799999952316284
1 0.399999976158142
2 0.200000047683716
3 0.100000023841858
4 0.0499999523162842
5 0.0249999761581421
6 0.0125000476837158
7 0.00625002384185791
8 0.00312495231628418
9 0.00156247615814209
10 0.00078129768371582
12 0.00019526481628418
17 6.07967376708984e-06
100 0
};
\addlegendentry{Euclidean}
\addplot [thick, blue]
table {%
0 0.799999952316284
1 0.536256074905396
2 0.410133838653564
3 0.334092140197754
4 0.282695531845093
5 0.245432734489441
6 0.21708881855011
7 0.194758892059326
8 0.176687479019165
9 0.161747932434082
10 0.149181723594666
11 0.138458967208862
12 0.129197835922241
13 0.121115684509277
14 0.113998770713806
15 0.107682704925537
16 0.102038145065308
17 0.0969629287719727
18 0.0923740863800049
19 0.0882046222686768
20 0.0843991041183472
22 0.077703595161438
24 0.0720008611679077
26 0.0670843124389648
28 0.0628011226654053
30 0.0590356588363647
33 0.0541694164276123
36 0.0500485897064209
39 0.0465136766433716
43 0.0425139665603638
47 0.039150595664978
52 0.0356301069259644
57 0.0326927900314331
63 0.0297517776489258
70 0.0269277095794678
78 0.0242940187454224
88 0.0216491222381592
100 0.0191489458084106
};
\addlegendentry{Entropy}
\addplot [thick, red]
table {%
0 0.799999952316284
1 0.557325839996338
2 0.441132783889771
3 0.370003581047058
4 0.321104645729065
5 0.285059571266174
6 0.257208108901978
7 0.234940648078918
8 0.216669678688049
9 0.20136833190918
10 0.188340544700623
11 0.177095890045166
12 0.167278051376343
13 0.158621788024902
14 0.150924682617188
15 0.144029855728149
16 0.137813329696655
17 0.132176041603088
18 0.127037644386292
19 0.122332215309143
20 0.118005275726318
22 0.110311508178711
24 0.1036696434021
26 0.097870945930481
28 0.0927591323852539
30 0.0882148742675781
32 0.0841456651687622
35 0.0787761211395264
38 0.0741243362426758
41 0.0700509548187256
45 0.0653417110443115
49 0.061292290687561
54 0.0569581985473633
59 0.0532597303390503
65 0.0494743585586548
72 0.0457538366317749
80 0.0422004461288452
89 0.0388740301132202
99 0.0358034372329712
100 0.035525918006897
};
\addlegendentry{Fractional Power ($\qexp=3/4$)}
\addplot [thick, color0!50!black]
table {%
0 0.799999952316284
1 0.682318210601807
2 0.509535908699036
3 0.319698929786682
4 0.174822449684143
5 0.0897815227508545
6 0.0452085733413696
7 0.0226447582244873
8 0.0113275051116943
9 0.00566434860229492
10 0.00283229351043701
11 0.00141608715057373
12 0.000708103179931641
14 0.00017702579498291
19 5.48362731933594e-06
100 0
};
\addlegendentry{Hellinger}
\addplot [thick, green!50.1960784313725!black]
table {%
0 0.799999952316284
1 0.482910990715027
2 0.354313373565674
3 0.284990310668945
4 0.24113941192627
5 0.210627675056458
6 0.188022255897522
7 0.170514702796936
8 0.156500339508057
9 0.144992589950562
10 0.13534939289093
11 0.127134203910828
12 0.120038866996765
13 0.113839387893677
14 0.108369112014771
15 0.103500604629517
16 0.0991353988647461
17 0.0951956510543823
18 0.0916192531585693
20 0.085363507270813
22 0.0800628662109375
24 0.0755046606063843
26 0.0715363025665283
28 0.0680451393127441
31 0.0635222196578979
34 0.0596752166748047
37 0.0563565492630005
41 0.0525728464126587
45 0.0493607521057129
50 0.0459606647491455
56 0.0425640344619751
63 0.0392997264862061
71 0.0362429618835449
80 0.0334290266036987
91 0.0306370258331299
100 0.0287426710128784
};
\addlegendentry{Hellinger (shifted)}
\end{axis}

\end{tikzpicture}}
\end{subfigure}
\hfill
\begin{subfigure}[b]{.45\linewidth}
\resizebox{\textwidth}{!}{
\begin{tikzpicture}

\begin{axis}[
legend pos=south west,
legend cell align={left},
legend style={fill opacity=0.8, draw opacity=1, text opacity=1, draw=white!80!black},
log basis x={10},
log basis y={10},
tick align=outside,
tick pos=left,
grid=both,
grid style={line width=.1pt, draw=gray!20},
major grid style={line width=.2pt,draw=gray!60},
xlabel={Iterations ($\run$)},
xmin=1, xmax=10000,
xmode=log,
xtick style={color=black},
ylabel={$\abs{\curr - \sol}$},
ymin=0.0005, ymax=1,
ymode=log,
ytick style={color=black}
]
\addplot [thick, blue]
table {%
1 0.536255955696106
2.00000023841858 0.410133868455887
2.99999976158142 0.334092080593109
4.00000095367432 0.282695561647415
5.00000095367432 0.245432749390602
6.00000047683716 0.217088833451271
6.99999952316284 0.19475881755352
8.99999904632568 0.161747917532921
11 0.138458997011185
14.0000009536743 0.113998793065548
18 0.0923740938305855
24 0.0720009058713913
31.9999980926514 0.0556991770863533
43.9999961853027 0.0416197888553143
62.9999923706055 0.0297517590224743
95.9999923706055 0.0199154745787382
159.000015258789 0.0122197614982724
300.000061035156 0.00655778544023633
715.999938964844 0.00277234171517193
2621.00024414062 0.000761307892389596
10000 0.000199865506147034
};
\addlegendentry{Entropy}
\addplot [thick, blue!50!black, dashed]
table {%
5.00000095367432 0.399691134691238
9999.0009765625 0.000199865506147034
};
\addlegendentry{theory: $\Theta(\run^{-1})$}
\addplot [thick, red]
table {%
1 0.557325899600983
2.00000023841858 0.441132843494415
2.99999976158142 0.370003491640091
4.00000095367432 0.32110458612442
5.00000095367432 0.28505951166153
6.00000047683716 0.257208108901978
8 0.216669619083405
10 0.188340544700623
13 0.158621773123741
17.0000019073486 0.132176086306572
23 0.106873579323292
31.9999980926514 0.0841456726193428
47.0000076293945 0.0632438734173775
73 0.0452729798853397
122.999977111816 0.0302517041563988
235.999954223633 0.0181559771299362
578 0.00893514323979616
2390.00024414062 0.00288305035792291
10000 0.000918549543712288
};
\addlegendentry{Fractional Power ($\qexp=3/4$)}
\addplot [thick, red!50!black, dashed]
table {%
5.00000095367432 0.401690781116486
9999.0009765625 0.000918549543712288
};
\addlegendentry{theory: $\Theta(\run^{-4/5})$}
\addplot [thick, green!50.1960784313725!black]
table {%
1 0.482910990715027
2.00000023841858 0.354313313961029
2.99999976158142 0.284990191459656
4.00000095367432 0.24113941192627
6.00000047683716 0.188022255897522
8.99999904632568 0.144992560148239
14.0000009536743 0.108369030058384
26.0000019073486 0.0715362876653671
1499.00036621094 0.00464871991425753
10000 0.00130712543614209
};
\addlegendentry{Hellinger (shifted)}
\addplot [thick, green!40!black, dashed]
table {%
5.00000095367432 0.20747934281826
9999.0009765625 0.00130712543614209
};
\addlegendentry{theory: $\Theta(\run^{-2/3})$}
\end{axis}

\end{tikzpicture}}
\end{subfigure}
\caption{The rate of convergence of \eqref{eq:MD} in \crefrange{ex:Eucl}{ex:Hell}.
The Euclidean and shifted Hellinger regularizers lead to a geometric rate 
(see left figure);
all other examples converge at a polynomial rate.}
\label{fig:examples}
\end{figure}


Albeit one-dimensional, the above examples provide a representative view of the geometry of Bregman proximal methods near a solution.
Specifically, they show that the divergence induced by a given regularizer may exhibit a very different behavior at the boundary of\;$\points$:
when $\sol$ is a boundary point, $\breg(\sol,\point)$ grows
as $\Theta(\norm{\point - \sol}^{2})$ in the Euclidean case,
as $\Theta(\norm{\point - \sol})$ for the negative entropy,
and, more generally,
as $\Theta(\norm{\point-\sol}^{\qexp})$ for the $\qexp$-th power regularizer.
As a result, when used as a measure of convergence, it is important to rescale the Bregman  divergence in order to avoid inflating \textendash\ or \emph{deflating} \textendash\ an algorithm's rate of convergence. 

Nonetheless, even if we take this rescaling into account, different instances of \eqref{eq:MD} may lead to completely different rates of convergence.
Specifically, in terms of absolute values (or norms), we observe a
geometric rate in the Euclidean and shifted Hellinger cases,
a rate of $\Theta(1/\run)$ for the negative entropy,
and
a rate of $\Theta(1/\run^{1/(2-\qexp)})$ for the $\qexp$-th power regularizer (\cf \cref{fig:examples} above).
This is due to the different first-order behavior of the iterative update map $\point\gets\fixmap(\point)$ that underlies \eqref{eq:MD}, which is itself intimately related to the growth rate of the Bregman divergence near a solution $\sol$ of \eqref{eq:VI}.
We make this relation precise in the next section.

\section{The Legendre exponent and convergence rate analysis}
\label{sec:general}

Our goal in this section is to provide a precise link between the geometry induced by a Bregman regularizer near a solution and the convergence rate of the associated Bregman proximal method.
The key notion in this regard is that of the \emph{Legendre exponent}, which we define and discuss in detail below.

\subsection{The Legendre exponent}
\label{sec:Legendre}

Our starting point is the observation that, without loss of generality, the local strong convexity requirement for $\hreg$ can be expressed as
\begin{equation}
\label{eq:Breg-lower}
\breg(\base,\point)
	\geq \tfrac{1}{2} \norm{\base - \point}^{2}
	\quad
	\text{\revise{for all $\point\in\proxdom$ sufficiently close to $\base$}}.
\end{equation}
Qualitatively, this means that the convergence topology induced by the Bregman divergence of $\hreg$ on $\points$ is \emph{at least as fine} as the ambient norm topology:
if a sequence $\curr[\point]\in\proxdom$, $\run=\running$, converges to $\base\in\points$ in the Bregman sense ($\breg(\base,\curr[\point]) \to 0$), it also converges in the ambient norm topology ($\norm{\curr[\point] - \base}\to0$).
On the other hand, from a quantitative standpoint, the rate of this convergence could be quite different:
as we saw in the previous section, the reverse inequality $\breg(\base,\point) = \bigoh(\norm{\base-\point}^{2})$ may fail to hold, in which case $\sqrt{\breg(\base,\curr[\point])}$ and $\norm{\point - \curr[\point]}$ would exhibit a different asymptotic behavior.

To quantify this gap, we use the notion of the \emph{Legendre exponent}, as introduced in \cite{AIMM21}.

\begin{definition}
\label{def:Legendre}
Let $\hreg$ be a Bregman regularizer on $\points$.
The \emph{Legendre exponent} of $\hreg$ at $\base\in\points$ is defined as
\begin{equation}
\label{eq:Legendre}
\legof{\base}
	\defeq \inf\setdef*{\legexp\in[0,1]}{\limsup_{\point\to\base} \frac{\sqrt{\breg(\base,\point)}}{\norm{\point-\base}^{1-\legexp}} < \infty}
\end{equation}
and we say that $\hreg$ is \emph{tight} at $\base$ if the infimum is attained in \eqref{eq:Legendre}, \ie if $\legof{\base}$ is the minimal $\legexp\in[0,1]$ such that
\begin{equation}
\label{eq:Breg-local}
\breg(\base,\point)
	= \bigof[\big]{\norm{\base - \point}^{2(1-\legexp)}}
	\quad
	\text{for $\point$ near $\base$}.
\end{equation}
\end{definition}

Informally, the Legendre exponent measures the deficit in relative size between ordinary ``norm neighborhoods'' in $\points$ and the corresponding ``Bregman neighborhoods'' induced by the sublevel sets of the Bregman divergence.
Specifically,
\begin{enumerate*}
[(\itshape i\hspace*{.5pt}\upshape)]
\item
the case $\legof{\base} = 0$ corresponds to the ``norm-like'' behavior $\breg(\base,\point) = \Theta(\norm{\base-\point}^{2})$;
\item
any other value $\legof{\base} \in (0,1)$ indicates a different limiting behavior for $\breg(\base,\point)$ as $\point\to\base$;
and, finally,
\item
when $\legof{\base} = 1$ we may have $\limsup_{\point\to\base} \breg(\base,\point) > 0$.
\end{enumerate*}
In this last case, the ambient norm topology is \emph{strictly coarser} than the Bregman topology in the sense that $\breg(\base,\curr)$ may remain bounded away from zero even if $\curr\to\base$ as $\run\to\infty$;
we provide an example of such behavior below \textendash\ and see also \cite{AIMM21,pauwels2023nature} for further discussion.

\begin{example}
[Non-compatible topologies]
Let $\points = \setdef{\point\in\R^{\nCoords}}{\twonorm{\point} \leq 1}$ be the unit Euclidean ball in $\R^{\nCoords}$ and consider the $\nCoords$-dimensional Hellinger regularizer $\hreg(\point) = -\sqrt{1 - \twonorm{\point}^{2}}$.
Then, for all $\base$ on the boundary of $\points$ and all $\point \in \proxdom = \intr\points$, we readily get
\begin{equation}
\label{eq:Breg-Hellinger}
\breg(\base,\point)
	= \frac{1 - \braket{\base}{\point}}{\sqrt{1 - \twonorm{\point}^{2}}}.
\end{equation}
If $\nCoords\geq2$, the limit $\lim_{\point\to\base} \breg(\base,\point)$ may not exist,
a fact which has the following counterintuitive consequences:
\begin{enumerate*}
[(\itshape i\hspace*{1pt}\upshape)]
\item
the ``Hellinger ball'' $\ball_{\radius}^{\hreg}(\base) \defeq \setdef{\point\in\proxdom}{\breg(\base,\point) \leq \radius^{2}/2}$ is \emph{not closed} in the Euclidean topology;
and
\item
the ``Hellinger center'' $\base$ of $\ball_{\radius}^{\hreg}(\base)$ actually belongs to the Euclidean boundary of $\ball_{\radius}^{\hreg}(\base)$.
\end{enumerate*}
As a result, for all $\nCoords\geq2$, it is straightforward to construct a sequence $\curr[\point]$ with $\twonorm{\curr[\point] - \base} \to 0$, but which remains at \emph{constant} Hellinger divergence relative to $\base$.
\footnote{For instance, if $\nCoords = 2$, the point $\point_{u} = (1-u,\sqrt{2u(1-u)})$ converges to $\base = (1,0)$ as $u\to0^{+}$, even though $\breg\left(\base,\point_{u}\right) = 1$ for all $u\in(0,1)$.
Crucially, if $\nCoords=1$, this phenomenon does not occur, \cf \cref{ex:Hell}.}
\hfill
\endenv
\end{example}

For illustration purposes, we compute below the Legendre exponent for each of the running examples of \cref{sec:examples} (see also \cref{tab:rates}):
\begin{enumerate}
\item
\emph{Quadratic regularization} (\cref{ex:Eucl}):
Since $\breg(\base,\point) = \parens{\base - \point}^{2}/2$ for all $\base,\point\in\points$, we have $\legof{\base} = 0$ for all $\base\in\points$.

\item
\emph{Negative entropy} (\cref{ex:ent}):
For $\base=0$, \cref{eq:mirror-ent} gives $\breg(0,\point) = \point$, so $\legof{0} = 1/2$.
Otherwise, for all $\base \in \proxdom = (0,\infty)$, a Taylor expansion with Lagrange remainder yields $\breg(\base,\point) = \bigoh(\parens{\base - \point}^{2})$, so $\legof{\base} = 0$ for all $\base\in(0,\infty)$.

\item
\emph{Tsallis entropy} (\cref{ex:frac}):
For $\base=0$, \cref{eq:mirror-frac} gives $\breg(0,\point) = \point^{\qexp}/\qexp$, so $\legof{0} = \max\{0,1-\qexp/2\}$.
Otherwise, for all $\base \in \proxdom = (0,\infty)$, a Taylor expansion yields $\breg(\base,\point) = \bigoh(\parens{\base - \point}^{2})$, so $\legof{\base} = 0$ in this case.

\item
\emph{Hellinger regularizer} (\cref{ex:Hell}):
For $\base = \pm1$, \cref{eq:mirror-Hell} readily gives $\breg(\pm1,\point) = \sqrt{(1 \mp \point)/(1 \pm \point)} = \Theta\parens{\abs{\point\mp1}^{1/2}}$, so $\legof{\pm1} = 1-1/4 = 3/4$.
Instead, if $\base\in(-1,1)$ a Taylor expansion again yields $\breg(\base,\point) = \bigoh(\parens{\base-\point}^{2})$, so $\legof{\base} = 0$ in this case.
\end{enumerate}
\smallskip

A common pattern that emerges above is that $\legof{\base}=0$ whenever $\base$ is an interior point.
We make this observation precise in \refinapp{lem:Leg-proxdom}{app:aux}, where we show more generally that $\legof{\base} = 0$ whenever $\nabla\hreg$ is (locally) Lipschitz continuous in a neighborhood of $\base$ in $\points$.%

\subsection{Convergence rate analysis}
\label{sec:rate-general}

We are now in a position to state our first general result for the convergence rate of \eqref{eq:BPM}.
To do so, we will make the blanket assumption that $\hreg$ is tight at $\sol$ with Legendre exponent $\legsol \defeq \legof{\sol}$.
In particular, this means that there exists a neighborhood $\legnhd$ of $\sol$ in $\points$ and a positive constant $\legconst>0$ such that
\begin{equation}
\label{eq:Breg-upper}
\breg(\sol,\point)
	\leq \frac{\legconst}{2} \norm{\point - \sol}^{2(1-\legsol)}
	\quad
	\text{for all $\point\in\legnhd$}.
\end{equation}
To ligthen notation, we will also assume that $\hreg$ is $1$-strongly convex on $\nhd$ (\cf \cref{rem:Bregman} and the beginning of \cref{sec:Legendre}).
We then have the following result.

\begin{theorem}
\label{thm:general}
Suppose that \cref{asm:Lipschitz,asm:strong,asm:signal-base} hold and \eqref{eq:BPM} is run with a constant step-size $\curr[\step] \equiv \step$, $\run = \running$, such that
\begin{equation}
\label{eq:step}
\step
	\leq \frac{1}{2\gold\lips}
	\quad
	\text{and}
	\quad
\step (1-\coef[a]-\coef[b])^{2}
	\leq \frac{\strong}{8\lips^{2}}
\end{equation}
where $\gold = (\sqrt{5}+1)/2$ is the golden ratio.
If $\init$ is initialized sufficiently close to $\sol$,
the iterates $\curr$ of \eqref{eq:BPM}
enjoys the bound
\begin{equation}
\label{eq:rate}
\breg(\sol,\curr)
	\leq \breg(\sol,\init) \cdot
	\begin{cases*}
		\parens*{1 - \frac{\strong\step}{2\legconst}}^{\run - 1}
			&\quad
			if $\legsol = 0$,
			\\[\medskipamount] 
		\bracks*{1 + \Const \strong \step(\run - 1)}^{1-1/\legsol}
			&\quad
			if $\legsol\in(0,1)$,
	\end{cases*}
\end{equation}
where
\(
\Const
= \expleg
\max\braces[\big]{2\legconst^{\frac{1}{1 - \legsol}}\breg(\sol,\init)^{-\expleg}, 2^{\expleg}}^{-1}.
\)
\end{theorem}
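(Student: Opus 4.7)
The plan is to derive a one-step ``energy'' inequality for $\breg(\sol, \curr)$ and then turn it into the claimed rate by invoking the Legendre exponent. First I would apply the standard three-point prox inequality twice: once for the leading step $\lead = \proxof{\curr}{-\step \curr[\signal]}$ with test point $\next$, and once for the base step $\next = \proxof{\curr}{-\step \vecfield(\lead)}$ with test point $\sol$. Summing these and splitting $\braket{\vecfield(\lead)}{\next - \sol} = \braket{\vecfield(\lead)}{\lead - \sol} + \braket{\vecfield(\lead)}{\next - \lead}$ gives an inequality of the form
\begin{equation*}
\breg(\sol, \next) \leq \breg(\sol, \curr) - \step\braket{\vecfield(\lead)}{\lead - \sol} - \breg(\lead, \curr) - \breg(\next, \lead) + \step \braket{\vecfield(\lead) - \curr[\signal]}{\next - \lead}.
\end{equation*}
The term $\step\braket{\vecfield(\lead)}{\lead - \sol}$ is lower-bounded by $\strong \step \norm{\lead - \sol}^{2}$ via \eqref{eq:strong} together with the variational inequality at $\sol$ (provided $\lead \in \basin$). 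For the residual $\vecfield(\lead) - \curr[\signal]$, I would use \eqref{eq:signal-base} to decompose it into contributions of the form $\coef[a](\vecfield(\lead) - \vecfield(\curr))$, $\coef[b](\vecfield(\lead) - \vecfield(\beforelead))$ and $(1 - \coef[a] - \coef[b])\vecfield(\lead)$; the first two are $\lips$-Lipschitz and absorbed into $\breg(\lead, \curr)$ (plus a short telescoping term involving $\norm{\lead - \beforelead}$ for the optimistic variant), while the last is again split as $\vecfield(\sol) + (\vecfield(\lead) - \vecfield(\sol))$ so that the VI kills the $\vecfield(\sol)$ part.

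After this bookkeeping, and using Young's inequality together with the strong-convexity lower bound $\breg(\pointalt, \point) \geq \tfrac{1}{2}\norm{\pointalt - \point}^{2}$ to dominate the various cross terms, the step-size conditions in \eqref{eq:step} --- in particular the golden-ratio factor, which emerges from the quadratic characteristic equation of the past-gradient recursion coupling $\norm{\lead - \curr}$ to $\norm{\curr - \beforelead}$ --- yield a clean one-step descent of the form $\breg(\sol, \next) \leq \breg(\sol, \curr) - c \strong \step \norm{\curr - \sol}^{2}$ for some universal constant $c > 0$ (possibly after a short telescoping over two consecutive steps in the optimistic case).

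The second stage converts this descent into the stated rate using the Legendre exponent. Tightness at $\sol$ gives $\breg(\sol, \curr) \leq (\legconst/2) \norm{\curr - \sol}^{2(1-\legsol)}$, which inverts to $\norm{\curr - \sol}^{2} \geq (2\breg(\sol, \curr)/\legconst)^{1/(1-\legsol)}$ when $\legsol < 1$. Setting $a_{\run} = \breg(\sol, \curr)$, the descent inequality turns into the scalar recursion $a_{\run+1} \leq a_{\run} - \kappa\, a_{\run}^{1/(1-\legsol)}$ with $\kappa$ proportional to $\strong\step$. For $\legsol = 0$ the exponent equals one and one reads off the linear contraction with factor $1 - \strong\step/(2\legconst)$; for $\legsol \in (0,1)$ the exponent $1/(1-\legsol) > 1$, and a standard Bihari-type comparison lemma for super-linear scalar recursions yields $a_{\run} \leq a_{1}\,\bracks{1 + \Const\, \strong\step(\run-1)}^{1-1/\legsol}$, with the precise prefactor $\Const$ emerging from this inversion. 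The $\max$ in the definition of $\Const$ simply separates the two regimes in which either the initial divergence $\breg(\sol,\init)$ or the geometric constant $\legconst$ dominates.

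Finally, a short induction closes the argument: since the one-step descent makes $\breg(\sol, \curr)$ non-increasing, strong convexity of $\hreg$ gives $\norm{\curr - \sol} \leq \sqrt{2\,\breg(\sol, \init)}$, so choosing $\init$ close enough to $\sol$ keeps every iterate inside $\basin \cap \legnhd$ and lets the above bounds be invoked at every step. The principal obstacle is the first paragraph: extracting a descent inequality with \emph{uniform} constants across the three oracle templates (MD, MP, OMD), while keeping the step-size threshold matched to \eqref{eq:step}, requires a delicate balancing of Young's inequalities, and this balancing is precisely what pins down the golden-ratio factor in \eqref{eq:step}.
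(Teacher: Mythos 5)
Your plan follows the same overall strategy as the paper: two‐step Bregman proximal inequalities with the polar‐cone\,/\,VI term, second‐order sufficiency to lower‐bound the drift toward $\sol$, Young's inequality for the oracle error, a descent inequality, and then a Legendre‐inversion feeding into a Polyak‐type scalar recursion. But two points in your sketch are imprecise enough to count as real gaps, and they are exactly where the work happens.

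First, the one‐step descent you claim, $\breg(\sol,\next) \le \breg(\sol,\curr) - c\strong\step\norm{\curr-\sol}^{2}$, does \emph{not} hold for \eqref{eq:MP} and \eqref{eq:OMD}, and ``possibly after a short telescoping over two consecutive steps'' does not resolve this. The oracle mismatch $\lead[\signal] - \curr[\signal]$ contributes a positive $\bigoh(\step^{2})$ term that depends on the previous half‐iterate and cannot be controlled by $\breg(\sol,\curr)$ or $\norm{\curr-\sol}$ alone. The paper handles this by building an explicit Lyapunov function $\curr[\energy] = \breg(\sol,\curr) + \curr[\pot]$ with $\curr[\pot] = \prev[\step]^{2}\dnorm{(\coef[a]+\coef[b])\beforelead[\signal] - \prev[\signal]}^{2}$, and proves $\next[\energy] \le \curr[\energy] - \strong\step\curr[\pot] - \tfrac14\strong\step\norm{\curr-\sol}^{2}$ (Proposition~\ref{prop:energy-strong}). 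The Legendre inversion is then run on $\curr[\energy]$, not on $\breg(\sol,\curr)$, which is also where the $\max$ in $\Const$ comes from: one must bound both $\curr[\breg]^{1+\leg}$ and $\curr[\pot]^{1+\leg}$ separately before recombining via convexity of $x\mapsto x^{1+\leg}$. Your plan collapses this Lyapunov structure, and for the optimistic template it would stall at the point where the residual $\coef[b]\bracks{\vecfield(\curr)-\vecfield(\beforelead)}$ must be absorbed.

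Second, your diagnosis of the golden ratio is wrong. It does not arise from ``the quadratic characteristic equation of the past‐gradient recursion''; it arises in the stability step. One needs simultaneously (i) $4\step^{2}\lips^{2}\le\coef$ so that $\norm{\lead-\sol}^{2}\le(1-\coef)\radius^{2}+4\step^{2}\lips^{2}\radius^{2}\le\radius^{2}$ keeps the iterates in the neighborhood, and (ii) $2\strong\step+4\step^{2}\lips^{2}\le1$ so that Proposition~\ref{prop:energy-strong} applies. Setting $2\step\lips\le1/\gold$ and $\coef=1/\gold^{2}$ makes both hold via the defining identity $1/\gold+1/\gold^{2}=1$. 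The golden ratio is a balancing constant for the induction, not a root of a second‐order linear recurrence. The rest of the plan (tightness at $\sol$ giving $\breg(\sol,\point)\le\tfrac{\legconst}{2}\norm{\point-\sol}^{2(1-\legsol)}$, the recursion $a_{\run+1}\le a_{\run} - \kappa\, a_{\run}^{1/(1-\legsol)}$, and Polyak's lemma for $\legsol>0$) matches the paper's Step~2.
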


Before moving on to the proof of \cref{thm:general}, some remarks and corollaries are in order (see also \cref{tab:rates} for an explicit illustration of the derived rates for \crefrange{ex:Eucl}{ex:Hell}):
\smallskip


\begin{table}[tbp]
\footnotesize
\centering
\renewcommand{\arraystretch}{1.25}

\begin{tabular}{lcccc}
\toprule
	&\textbf{Domain ($\points$)}
	&\textbf{Regularizer ($\hreg$)}
	&\textbf{Legendre Exponent ($\legof{\base}$)}
	&\textbf{Convergence Rate}
	\\
\midrule
\scshape{Euclidean}	
	&arbitrary
	&$\point^{2}/2$
	&$0$
	&Linear 
	\\
\scshape{Entropic}
	&$[0,\infty)$
	&$\point\log\point$
	&$1/2$
	&$\bigoh(1/\run)$
	\\
\scshape{Tsallis}
	&$[0,\infty)$
	&$[\qexp(1-\qexp)]^{-1} (\point - \point^{\qexp})$
	&$\max\braces{0,1-\qexp/2}$
	&$\bigoh(1/\run^{\qexp/(2-\qexp)})$
	\\
\scshape{Hellinger}
	&$[-1,1]$
	&$-\sqrt{1-\point^{2}}$
	&$3/4$
	&$\bigoh(1/\run^{1/3})$
	\\
\bottomrule
\end{tabular}

\smallskip
\caption{Summary of the Legendre exponents for the $1$-dimensional examples of \Cref{sec:setup} at a boundary point $\base$ of $\points \subset \R$, and the associated convergence rates in terms of the Bregman divergence $\breg(\sol,\curr)$}.
\label{tab:rates}
\end{table}


\setcounter{remark}{0}
\begin{remark}
The first point of note is the sharp drop in the convergence rate of \eqref{eq:BPM} from geometric, when $\legsol=0$, to a power law when $\legsol>0$.
As we saw in \cref{sec:examples}, this drop is unavoidable, even when $\points$ is $1$-dimensional and $\vecfield$ is affine;
in fact, the calculations of \cref{sec:examples} show that the rates provided by \cref{thm:general} are, in general, unimprovable.
\hfill
\endenv
\end{remark}

\begin{remark}
We should also note that the guarantees of \cref{thm:general} are 
stated in terms of the Bregman divergence, not the ambient norm.
Since $\breg(\sol,\curr) = \Omega(\norm{\curr - \sol}^{2})$, these bounds can be restated in terms of $\norm{\curr-\sol}$, but this conversion is not without loss of information:
if the bound $\breg(\sol,\curr) = \Omega(\norm{\curr - \sol}^{2})$ is not tight, the actual rate in terms of the norm may be significantly different.
This phenomenon was already observed in the $1$-dimensional examples of \cref{sec:examples} where $\breg(\sol,\curr) = \Theta(\norm{\curr-\sol}^{2(1-\legsol)})$, in which case \cref{thm:general} gives
\begin{equation}
\norm{\curr-\sol}
	= \bigof[\big]{\run^{-1/(2\legsol)}}
\end{equation}
whenever $\legsol>0$ (see also \cref{tab:rates}).
In general however, the Bregman divergence may grow at different rates along different rays emanating from $\sol$, so it is not always possible to translate a Bregman-based bound to a norm-based bound (or vice versa).
This analysis requires a much closer look at the geometric structure of $\points$, depending on which constraints are active at $\sol$;
we examine this issue at depth in \cref{sec:sharp}.
\hfill
\endenv
\end{remark}

\begin{remark}
\label{rem:variable}
We should also note that, even though \cref{thm:general} is stated for a constant step-size, our proof allows for a variable step-size $\curr[\step]$, provided that the step-size conditions \eqref{eq:step} are satisfied.
In this case, the bounds \eqref{eq:rate} becomes
\begin{equation*}
\breg(\sol,\curr)
	\leq \breg(\sol,\init)
	\!\cdot\! 
	\begin{cases*}
	\prod_{\runalt=\start}^{\run-1}\parens*{1 - \frac{\strong\iter[\step]}{2\legconst}}
		&\text{if $\legsol=0$},
	\\
	\bracks[\big]{1 + \Const \strong \sum_{\runalt=\start}^{\run-1} \iter[\step]}^{1-1/\legsol}
		&\text{if $\legsol \in (0, 1)$)}.
		\;
	\end{cases*}
\end{equation*}
\end{remark}

\subsection{Proof of \cref{thm:general}}
\label{sec:proof-general}

We now proceed to the proof of \cref{thm:general}, beginning with a series of intermediate results tailored to the update structure of \eqref{eq:BPM}.
The first of these lemmas relates the Bregman divergence before and after a prox-step modulo an element of the polar cone $\pcone(\base) \defeq \setdef{\dbase\in\dpoints}{\braket{\dbase}{\point - \base} \leq 0 \; \text{for all $\point\in\points$}}$ of $\points$ at the reference point $\base$.

\begin{lemma}
\label{lem:onestep}
Let $\new = \proxof{\point}{\dvec}$ for some $\point\in \nhd \cap \proxdom$, $\dvec\in\dpoints$, such that $\new\in\nhd$.
Then, for all $\base\in\points$ and all 
$\dbase\in\pcone(\base)$, we have:
\begin{subequations}
\begin{align}
\breg(\base,\new)
	&\leq \breg(\base,\point)
		+ \braket{\dvec - \dbase}{\new - \base}
		- \breg(\new,\point)
		\\
	&\leq \breg(\base,\point)
		+ \braket{\dvec - \dbase}{\point - \base}
		+ \tfrac{1}{2} \dnorm{\dvec - \dbase}^{2}
		\,,
\end{align}
\end{subequations}
\end{lemma}

The next lemma extends \cref{lem:onestep} to emulate the two-step structure of \eqref{eq:BPM}:

\begin{lemma}
\label{lem:twostep}
\revise{Let $\new_{i} = \proxof{\point}{\dvec_{i}}$ for some $\point\in \nhd \cap \proxdom$, $\dvec_{i}\in\dpoints$, such that $\new_i \in \nhd$, $i=1,2$.}
Then, for all $\base\in\points$ and all $\dbase\in\pcone(\base)$, we have:
\begin{equation}
\breg(\base,\new_{2})
	\leq \breg(\base,\point)
		+ \braket{\dvec_{2} - \dbase}{\new_{1} - \base}
		+ \tfrac{1}{2} \dnorm{\dvec_{2} - \dvec_{1} - \dbase}^{2}
		- \tfrac{1}{2} \norm{\new_{1} - \point}^{2}.
\end{equation}
\end{lemma}

Versions of the above inequalities already exist in the literature, see \eg\cite[Lem.~4]{JNT11}, \cite[Prop.~B.4]{MLZF+19}.
The novelty in \cref{lem:onestep,lem:twostep} is the extra term involving the polar vector $\dbase\in\pcone(\base)$;
this term plays an important role in the sequel, so we provide a complete proof \refapp{app:aux}.

With these preliminaries in hand, we proceed to derive two further inequalities that play a pivotal role in the analysis of \eqref{eq:BPM}.
The first is an immediate corollary of \cref{lem:twostep}:

\begin{corollary}
\label{cor:template}
Let $\sol$ be a solution of \eqref{eq:VI}
Then, for all $\coef[c]\geq0$ and all $\run=\running$ \revise{such that $\curr, \lead \in \nhd$} the iterates of \eqref{eq:BPM} satisfy the template inequality
\begin{align}
\label{eq:template}
\breg(\sol,\next)
	\leq \breg(\sol, \curr)
		&- \curr[\step] \braket{\lead[\signal] - \coef[c]\solvec}{\lead - \sol}
	\notag\\
		&+ \tfrac{1}{2} \curr[\step]^{2} \dnorm{\lead[\signal] - \curr[\signal] - \coef[c]\solvec}^{2}
		- \tfrac{1}{2} \norm{\lead - \curr}^{2}.
\end{align}
\end{corollary}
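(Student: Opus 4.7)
The plan is to derive \eqref{eq:template} by a direct application of \Cref{lem:twostep} to the two prox-steps defining \eqref{eq:BPM}, with a carefully chosen polar vector that encodes the variational inequality structure at $\sol$.

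First, I would read off the natural substitutions dictated by \eqref{eq:BPM}: take $\point = \curr$, $\dvec_{1} = -\curr[\step]\curr[\signal]$ and $\dvec_{2} = -\curr[\step]\lead[\signal]$, so that $\new_{1} = \proxof{\curr}{-\curr[\step]\curr[\signal]} = \lead$ and $\new_{2} = \proxof{\curr}{-\curr[\step]\lead[\signal]} = \next$, and set $\base = \sol$ as the reference point. At this stage, \Cref{lem:twostep} gives an inequality of the same shape as \eqref{eq:template}, but without the $\coef[c]\solvec$ term; the question is how to introduce $\solvec = \vecfield(\sol)$ into the estimate.

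The key observation is that, since $\sol$ solves \eqref{eq:VI}, we have $\braket{\vecfield(\sol)}{\point - \sol} \geq 0$ for every $\point \in \points$, so $-\vecfield(\sol) \in \pcone(\sol)$; because $\pcone(\sol)$ is a cone and $\coef[c]\curr[\step] \geq 0$, this implies that $\dbase \defeq -\coef[c]\curr[\step]\,\vecfield(\sol)$ is an admissible polar vector in \Cref{lem:twostep}. Plugging this choice in, one obtains
\[
\dvec_{2} - \dbase = -\curr[\step]\bracks[\big]{\lead[\signal] - \coef[c]\solvec},
\qquad
\dvec_{2} - \dvec_{1} - \dbase = -\curr[\step]\bracks[\big]{\lead[\signal] - \curr[\signal] - \coef[c]\solvec},
\]
so the linear term of \Cref{lem:twostep} becomes $-\curr[\step]\braket{\lead[\signal] - \coef[c]\solvec}{\lead - \sol}$, the squared-norm term becomes $\tfrac{1}{2}\curr[\step]^{2}\dnorm{\lead[\signal] - \curr[\signal] - \coef[c]\solvec}^{2}$, and the negative term reads $-\tfrac{1}{2}\norm{\lead - \curr}^{2}$. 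Collecting everything yields \eqref{eq:template}.

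There is no real obstacle in this proof: once the identification of $\dvec_{1},\dvec_{2},\new_{1},\new_{2}$ with the (BPM) iterates is made, the only non-mechanical step is recognizing that the slack variable $\coef[c]$ can be inserted by exploiting the variational inequality at $\sol$ through the polar cone $\pcone(\sol)$. The parameter $\coef[c]$ is left free here because it will be chosen later (in the proofs of \Cref{thm:general} and of the sharp-direction refinement) to balance monotonicity against the oracle-signal mismatch $\lead[\signal] - \curr[\signal]$.
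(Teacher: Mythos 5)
Your proposal matches the paper's proof exactly: both apply \cref{lem:twostep} with $\point \gets \curr$, $\base \gets \sol$, $(\dvec_{1},\dvec_{2}) \gets (-\curr[\step]\curr[\signal], -\curr[\step]\lead[\signal])$, and the polar vector $\dbase \gets -\coef[c]\curr[\step]\solvec$, whose admissibility follows from $-\solvec \in \pcone(\sol)$ and the cone property. The identification of the iterates and the algebra for the three terms are all correct, so the argument is sound.
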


\begin{proof}
Since $\sol$ is a solution of \eqref{eq:VI}, we have $\solvec \in -\pcone(\sol)$.
\cref{eq:template} then follows by invoking \cref{lem:twostep} with $\point \gets \curr$, $\base\gets\sol$, $\dbase \gets - \coef[c]\curr[\step]\solvec \in \pcone(\sol)$ and $(\dvec_{1},\dvec_{2}) \gets (-\curr[\step]\curr[\signal],-\curr[\step]\lead[\signal])$.
\end{proof}

The second inequality that we derive provides an ``energy function'' for \eqref{eq:BPM}, namely
\begin{equation}
\label{eq:energy}
\curr[\energy]
	= \curr[\breg] + \curr[\pot]
\end{equation}
where $\curr[\breg]
	= \breg(\sol,\curr)$
	and
$\curr[\pot] = \prev[\step]^{2} \dnorm{(\coef[a]+\coef[b]) \beforelead[\signal] - \prev[\signal]}^{2}$ (by convention, we take $\init[\pot] = 0$).
The lemma below outlines the Lyapunov properties of $\curr[\energy]$.

\begin{proposition}
\label{prop:energy}
Suppose that \cref{asm:Lipschitz,asm:signal-base} hold and \eqref{eq:BPM} is run with a step-size such that
\begin{equation}
\label{eq:step-energy}
\coef \curr[\step] + 4\curr[\step]^{2}\lips^{2}
	\leq 1
	\quad
	\text{for some $\coef\geq0$ and all $\run=\running$}
\end{equation}
Then the iterates $\curr$ of \eqref{eq:BPM} satisfy \revise{for $\run \geq \start$ such that $\curr,\lead \in \nhd$},
\begin{align}
\label{eq:energy-bound}
\next[\energy]
	\leq \curr[\energy]
		- \coef\curr[\step] \curr[\pot]
	&- \curr[\step] \braket{\vecfield(\lead) - \solvec}{\lead - \sol}
	\notag\\
	&- \curr[\step] (\coef[a] + \coef[b]) \braket{\solvec}{\lead - \sol}
		- \tfrac{1}{2} \norm{\lead - \curr}^{2}
	\notag\\
	&+ \curr[\step]^{2} (1-\coef[a]-\coef[b])^{2} \lips^{2} \norm{\lead- \sol}^{2}
	\notag\\
	&+ 2\curr[\step]^{2}(\coef[a] + \coef[b])^{2} \lips^{2} \norm{\lead - \curr}^{2}.
\end{align}
\end{proposition}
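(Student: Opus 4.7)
The plan is to derive the energy inequality by starting from the template bound of \cref{cor:template} and decomposing the residual dual-norm term so that the next-step potential $\next[\pot]$ emerges naturally, while the previous-step potential $\curr[\pot]$ is absorbed via a prox non-expansiveness argument. Concretely, I would first invoke \cref{cor:template} with the choice $\coef[c] = 1 - \coef[a] - \coef[b]$, which is non-negative by \cref{asm:signal-base}. Splitting $\braket{\lead[\signal] - \coef[c]\solvec}{\lead - \sol} = \braket{\vecfield(\lead) - \solvec}{\lead - \sol} + (\coef[a]+\coef[b])\braket{\solvec}{\lead - \sol}$ recovers the two inner-product terms on the RHS of \eqref{eq:energy-bound}. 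For the residual $\lead[\signal] - \curr[\signal] - \coef[c]\solvec$, I would write it as $A + B$ with $A = (\coef[a]+\coef[b])\vecfield(\lead) - \curr[\signal]$ and $B = (1-\coef[a]-\coef[b])(\vecfield(\lead) - \solvec)$, apply $\dnorm{A+B}^2 \leq 2\dnorm{A}^2 + 2\dnorm{B}^2$, and bound $\dnorm{B}$ via Lipschitz continuity of $\vecfield$. The first summand yields exactly $\curr[\step]^2\dnorm{A}^2 = \next[\pot]$ by definition of the potential, while the second contributes $\curr[\step]^2(1-\coef[a]-\coef[b])^2\lips^2\norm{\lead-\sol}^2$, matching the target.

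The main technical step is then to form the energy $\next[\energy] = \next[\breg] + \next[\pot]$ on the left by adding $\next[\pot] - \curr[\pot]$, and to absorb the resulting $2\next[\pot] - \curr[\pot]$ into the target term $-\coef\curr[\step]\curr[\pot] + 2\curr[\step]^2(\coef[a]+\coef[b])^2\lips^2\norm{\lead-\curr}^2$. I would achieve this by decomposing $A$ further as $(\coef[a]+\coef[b])(\vecfield(\lead) - \vecfield(\curr)) + \coef[b](\vecfield(\curr) - \vecfield(\beforelead))$ and applying another Young-type split. The Lipschitz bound turns the first part into $\norm{\lead-\curr}^2$; the second part $\coef[b]^2\dnorm{\vecfield(\curr) - \vecfield(\beforelead)}^2$ is bounded via $\lips^2\norm{\curr-\beforelead}^2$ and the prox non-expansiveness $\norm{\curr - \beforelead} \leq \prev[\step]\dnorm{\beforelead[\signal] - \prev[\signal]}$, itself a corollary of \cref{lem:onestep} applied to the two prox-steps at time $\run-1$ together with the $1$-strong convexity of $\hreg$. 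Since \cref{asm:signal-base} forces $\coef[a]+\coef[b]=1$ whenever $\coef[b]>0$, this yields $\coef[b]^2\dnorm{\vecfield(\curr) - \vecfield(\beforelead)}^2 \leq \lips^2\curr[\pot]$ uniformly across all cases.

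The hardest part will be tracking the constants through the two successive Young's inequalities so that everything closes: the step-size condition $\coef\curr[\step] + 4\curr[\step]^2\lips^2 \leq 1$ is tight precisely because the factor $4$ absorbs the compounded factors of $2$ from the two splits, allowing $2\next[\pot]$ to be dominated by $(1-\coef\curr[\step])\curr[\pot] + 2\curr[\step]^2(\coef[a]+\coef[b])^2\lips^2\norm{\lead-\curr}^2$ and yielding \eqref{eq:energy-bound}.
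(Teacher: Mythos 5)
Your proposal follows the paper's proof essentially step by step: same invocation of \cref{cor:template} with $\coef[c]=1-\coef[a]-\coef[b]$, same split of the inner product, same two-term Young decomposition of the residual $\lead[\signal]-\curr[\signal]-\coef[c]\solvec$ into a $\next[\pot]$ part and a Lipschitz part, same further decomposition of $(\coef[a]+\coef[b])\lead[\signal]-\curr[\signal]$ into $(\coef[a]+\coef[b])\bracks{\vecfield(\lead)-\vecfield(\curr)}+\coef[b]\bracks{\vecfield(\curr)-\vecfield(\beforelead)}$ with prox non-expansiveness (\cref{lem:proxlip}) handling the $\norm{\curr-\beforelead}$ factor, and the same exploitation of the \cref{asm:signal-base} dichotomy ($\coef[c]=0$ whenever $\coef[b]>0$) to identify the last term with $\curr[\pot]$; the $\run=\start$ case that the paper handles separately is in your version absorbed by the initialization convention $\init=\state_{1/2}$, which is slightly cleaner. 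One caveat on the final bookkeeping: the two successive Young splits actually give $2\next[\pot]\leq(1-\coef\curr[\step])\curr[\pot]+4\curr[\step]^2(\coef[a]+\coef[b])^2\lips^2\norm{\lead-\curr}^2$ (note the $4$, as in the paper's own intermediate bound \eqref{eq:potbound}), rather than the $2$ you claim in the last sentence; the $2$ appearing in \eqref{eq:energy-bound} itself appears to be a typographical slip in the statement.
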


\begin{proof}
Let $\coef[c] = 1 - \coef[a] - \coef[b]$ so $\coef[c] \geq 0$ by \cref{asm:signal-base}.
\cref{cor:template} then yields
\begin{align}
\label{eq:energy-1}
\next[\breg]
	\leq \curr[\breg]
	&- \curr[\step] \braket{\lead[\signal] - \solvec}{\lead - \sol}
	\notag\\
	&- \curr[\step] (\coef[a] + \coef[b]) \braket{\solvec}{\lead - \sol}
		- \tfrac{1}{2} \norm{\lead - \curr}^{2}
	\notag\\
	&+ \frac{\curr[\step]^{2}}{2} \dnorm{\lead[\signal] - \curr[\signal] - \coef[c]\solvec}^{2}.
\end{align}
Since $\lead[\signal] = (\coef[a] + \coef[b])\lead[\signal] + \coef[c]\lead[\signal]$, the last term above may be bounded as
\begin{align}
\label{eq:energy-2}
\tfrac{1}{2} \curr[\step]^{2} \dnorm{\lead[\signal] - \curr[\signal] - \coef[c]\solvec}^{2}
	&\leq \curr[\step]^{2}\coef[c]^{2} \dnorm{\lead[\signal] - \solvec}^{2}
		+ \curr[\step]^{2}\dnorm{(\coef[a]+\coef[b])\lead[\signal] - \curr[\signal]}^{2}
	\notag\\
	&\leq \curr[\step]^{2}\coef[c]^{2}\lips^{2} \dnorm{\lead- \sol}^{2}
		+ \curr[\step]^{2}\dnorm{(\coef[a]+\coef[b])\lead[\signal] - \curr[\signal]}^{2}
	\notag\\
	&= \curr[\step]^{2} (1-\coef[a]-\coef[b])^{2}\lips^{2} \norm{\lead- \sol}^{2} + \next[\pot]\,,
\end{align}
where we used \cref{asm:Lipschitz} in the second line and the definition \eqref{eq:energy} of $\curr[\pot]$ in the last one.
Thus, combining \cref{eq:energy-1,eq:energy-2} and comparing to \eqref{eq:energy-bound}, it suffices to show that
\begin{equation}
\label{eq:potbound}
2\next[\pot]
	\leq (1-\coef\curr[\step]) \curr[\pot]
		+ 4\curr[\step]^{2}(\coef[a] + \coef[b])^{2} \lips^{2} \norm{\lead - \curr}^{2}
	\quad
	\text{for all $\run=\running$}
\end{equation}
We consider two distinct cases for this below.

\para{Case 1: $\run=\start$}
By the definition \eqref{eq:energy} of $\curr[\pot]$ and \cref{eq:signal-base,eq:signal-lead}, we have:
\begin{align*}
\afterinit[\pot]
	= \init[\step]^{2} \dnorm{(\coef[a]+\coef[b])\signal_{3/2} - \init[\signal]}^{2}
	&= \init[\step]^{2} (\coef[a]+\coef[b])^{2} \dnorm{\vecfield(\state_{3/2}) - \vecfield(\init)}^{2}
	\notag\\
	&\leq \init[\step]^{2} (\coef[a]+\coef[b])^{2} \lips^{2} \norm{\state_{3/2} - \init}^{2},
\end{align*}
where we used the initialization assumption $\init = \state_{1/2}$ in the second equality and the Lipschitz continuity of $\vecfield$ in the last one.
Since $\init[\pot]=0$ by construction, our claim is immediate.

\para{Case 2: $\run>\start$}
By Young's inequality and the Lipschitz continuity of $\vecfield$, we readily obtain
\begin{align*}
\next[\pot]
	&= \curr[\step]^{2} \dnorm{(\coef[a]+\coef[b])\lead[\signal] - \curr[\signal]}^{2}
	\notag\\
	&= \curr[\step]^{2} \dnorm[\big]{
			(\coef[a]+\coef[b]) \bracks{\vecfield(\lead) - \vecfield(\curr)}
			+ \coef[b] \bracks{\vecfield(\curr) - \vecfield(\beforelead)}}^{2}
	\notag\\
	&\leq 2\curr[\step]^{2} (\coef[a] + \coef[b])^{2} \dnorm{\vecfield(\lead) - \vecfield(\curr)}^{2}
		+ 2\curr[\step]^{2} \coef[b]^{2} \dnorm{\vecfield(\curr) - \vecfield(\beforelead)}^{2}
	\notag\\
	&\leq 2\curr[\step]^{2} (\coef[a] + \coef[b])^{2} \lips^{2} \norm{\lead - \curr}^{2}
		+ 2\curr[\step]^{2} \coef[b]^{2} \lips^{2} \norm{\curr - \beforelead}^{2}
	\notag\\
	&\leq 2\curr[\step]^{2} (\coef[a] + \coef[b])^{2} \lips^{2} \norm{\lead - \curr}^{2}
		+ 2\curr[\step]^{2} \coef[b]^{2} \lips^{2} \prev[\step]^{2} \dnorm{\beforelead[\signal] - \prev[\signal]}^{2}
\end{align*}
where, in the last line, we used \cref{lem:proxlip} to bound the difference $\curr - \beforelead$ as
\begin{equation}
\norm{\curr - \beforelead}
	= \norm{\proxof{\prev}{-\prev[\step]\beforelead[\signal]} - \proxof{\prev}{-\prev[\step]\prev[\signal]}}
	\leq \prev[\step] \dnorm{\beforelead[\signal] - \prev[\signal]}.
\end{equation}
Finally, by \cref{asm:signal-base}, we have $\coef[c] = 0$ whenever $\coef[b] > 0$, so
$\coef[b]^{2} \prev[\step]^{2}\dnorm{\beforelead[\signal] - \prev[\signal]}^{2}
	= \coef[b]^{2} \prev[\step]^{2} \dnorm{(1 - \coef[c])\beforelead[\signal] - \prev[\signal]}^{2} = \coef[b]^{2}\curr[\pot]$ for all $\run>\start$.
Hence, putting everything together, we get
\begin{equation}
\next[\pot]
	\leq 2\curr[\step]^{2} (\coef[a] + \coef[b])^{2} \lips^{2} \norm{\lead - \curr}^{2}
		+ 2\curr[\step]^{2} \coef[b]^{2} \lips^{2} \curr[\pot].
\end{equation}
\cref{eq:potbound} then follows by the requirement \eqref{eq:step-energy}, which implies that $2\curr[\step]^{2} \lips^{2} \leq (1 - \coef\curr[\step])/2$.
\end{proof}

Moving forward, since $\sol$ is a solution of \eqref{eq:VI},
the first line of \eqref{eq:energy-bound} yields a negative $\bigoh(\curr[\step])$ contribution to $\curr[\energy]$, 
whereas the third and fourth lines collectively represent a subleading $\bigoh(\curr[\step]^{2})$ ``error term''. 
This decomposition would suffice for the analysis of \eqref{eq:BPM} if the coupling term $\braket{\solvec}{\lead - \sol}$ did not incur an additional $\bigoh(\curr[\step])$ positive contribution to $\next[\energy]$.
This error term is difficult to control but if $\sol$ satisfies \eqref{eq:strong}, we have the following bound.

\begin{lemma}
\label{lem:strong-bound}
Suppose that \cref{asm:strong} holds.
Then, for all $\point\in\points$, $\pointalt\in\basin$ and all $\coef[c]\in[0,1]$, we have:
\begin{equation}
\label{eq:strong-bound}
\braket{\vecfield(\pointalt) - \coef[c] \solvec}{\pointalt - \sol}
	\geq \tfrac{1}{2} \strong \norm{\point - \sol}^{2}
		- \strong \norm{\pointalt - \point}^{2}.
	\end{equation}
\end{lemma}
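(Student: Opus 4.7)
The plan is to decompose the inner product on the left-hand side by adding and subtracting $\solvec$, thereby isolating a term where \eqref{eq:strong} can be applied and a residual coupling term involving $\solvec$ alone. Specifically, write
\[
\braket{\vecfield(\pointalt) - \coef[c] \solvec}{\pointalt - \sol}
= \braket{\vecfield(\pointalt) - \solvec}{\pointalt - \sol}
+ (1 - \coef[c])\braket{\solvec}{\pointalt - \sol}.
\]
Since $\pointalt \in \basin \subseteq \points$, applying \eqref{eq:VI} with the test point $\pointalt$ gives $\braket{\solvec}{\pointalt - \sol} \geq 0$; coupled with $1 - \coef[c] \geq 0$, this lets me discard the second summand. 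The \ac{SOS} condition \eqref{eq:strong}, again valid because $\pointalt \in \basin$, then bounds the first summand from below by $\strong \norm{\pointalt - \sol}^{2}$.

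It remains to swap the reference point $\pointalt$ for $\point$ at the cost of the error term $\strong\norm{\pointalt-\point}^{2}$. For this I apply the elementary inequality $\norm{a+b}^{2} \leq 2\norm{a}^{2} + 2\norm{b}^{2}$ to the decomposition $\point - \sol = (\point - \pointalt) + (\pointalt - \sol)$, which rearranges into $\norm{\pointalt - \sol}^{2} \geq \tfrac{1}{2}\norm{\point - \sol}^{2} - \norm{\pointalt - \point}^{2}$. Multiplying by $\strong$ and chaining with the previous lower bound yields the claim.

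There is no real obstacle here: the argument is essentially bookkeeping combined with one application each of \eqref{eq:VI}, \eqref{eq:strong}, and Young's inequality. The only delicate point is the sign of the coupling term, which relies crucially on the two-sided constraint $\coef[c]\in[0,1]$ and on $\pointalt$ being feasible (so that \eqref{eq:VI} can be invoked against it); both are part of the hypotheses of the lemma.
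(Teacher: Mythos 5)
Your proof is correct and follows exactly the same route as the paper's: add and subtract $\solvec$, discard the nonnegative term $(1-\coef[c])\braket{\solvec}{\pointalt-\sol}$ using the \ac{VI} inequality and $\coef[c]\in[0,1]$, invoke \eqref{eq:strong} on the remainder, and finish with the elementary bound $\norm{\point-\sol}^{2} \leq 2\norm{\point-\pointalt}^{2} + 2\norm{\pointalt-\sol}^{2}$. No discrepancies.
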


\begin{proof}
Since $\sol$ is a solution of \eqref{eq:VI} and $\coef[c]\in[0,1]$, we have $(1-\coef[c]) \braket{\solvec}{\pointalt - \sol} \geq 0$ for all $\pointalt\in\points$.
Hence, by \cref{asm:strong}, we get
\begin{equation}
\braket{\vecfield(\pointalt) - \coef[c]\solvec}{\pointalt - \sol}
	\geq \braket{\vecfield(\pointalt) - \solvec}{\pointalt - \sol}
	\geq \strong \norm{\pointalt -\sol}^{2}
\end{equation}
and our assertion follows from the basic bound $\norm{\point - \sol}^{2} \leq 2\norm{\point - \pointalt}^{2} + 2\norm{\pointalt - \sol}^{2}$.
\end{proof}

With this ancillary estimate in hand, we may finally sharpen \cref{prop:energy} to obtain a bona fide energy inequality for solutions satisfying \eqref{eq:strong}:

\begin{proposition}
\label{prop:energy-strong}
Suppose that \cref{asm:Lipschitz,asm:strong,asm:signal-base} hold and \eqref{eq:BPM} is run with\;$\curr[\step]$\;such\;that%
\begin{equation}
\label{eq:step-energy-strong}
2\strong \curr[\step] + 4\curr[\step]^{2}\lips^{2}
	\leq 1
	\quad
	\text{and}
	\quad
(1-\coef[a]-\coef[b])^{2}\curr[\step]
	\leq \frac{\strong}{8\lips^{2}}
	\quad
	\text{for all $\run=\running$}
\end{equation}
Then, for all $\run \geq \start$ such that \revise{$\curr,\lead \in \nhd$} and $\lead \in \basin$, we have
\begin{equation}
\label{eq:energy-strong}
\next[\energy]
	\leq \curr[\energy]
		- \strong \curr[\step] \curr[\pot]
		- \tfrac{1}{4} \strong\curr[\step] \norm{\curr - \sol}^{2}.
\end{equation}
\end{proposition}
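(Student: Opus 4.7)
The plan is to invoke \cref{prop:energy} with $\coef = 2\strong$, which is legitimate because the first inequality of \eqref{eq:step-energy-strong} is exactly \eqref{eq:step-energy} for this choice, and then massage the resulting bound to expose the target $-\tfrac{1}{4}\strong\curr[\step]\norm{\curr-\sol}^{2}$ term. Setting $\coef[c] \defeq 1-\coef[a]-\coef[b]\in[0,1]$, the two inner-product terms appearing in \eqref{eq:energy-bound} combine into $-\curr[\step]\braket{\vecfield(\lead)-\coef[c]\solvec}{\lead-\sol}$. Since $\lead\in\basin$, I would invoke the intermediate step inside the proof of \cref{lem:strong-bound} (before the Young-type splitting used there) to obtain the sharper bound $\braket{\vecfield(\lead)-\coef[c]\solvec}{\lead-\sol}\geq \strong\norm{\lead-\sol}^{2}$, so that the merged inner product contributes $-\strong\curr[\step]\norm{\lead-\sol}^{2}$ to the right-hand side.

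Next, the second condition in \eqref{eq:step-energy-strong} ensures $\curr[\step]^{2}(1-\coef[a]-\coef[b])^{2}\lips^{2}\leq \tfrac{1}{8}\strong\curr[\step]$, which dominates the only positive $\norm{\lead-\sol}^{2}$ term in \eqref{eq:energy-bound} and leaves an effective coefficient of $-\tfrac{7}{8}\strong\curr[\step]$ on $\norm{\lead-\sol}^{2}$. I would then spend a share $\tfrac{1}{2}\strong\curr[\step]$ of this reserve via the elementary two-point expansion $\norm{\lead-\sol}^{2} \geq \tfrac{1}{2}\norm{\curr-\sol}^{2} - \norm{\lead-\curr}^{2}$, which generates precisely the target $-\tfrac{1}{4}\strong\curr[\step]\norm{\curr-\sol}^{2}$ together with a spurious $+\tfrac{1}{2}\strong\curr[\step]\norm{\lead-\curr}^{2}$; the remaining $-\tfrac{3}{8}\strong\curr[\step]\norm{\lead-\sol}^{2}$ being non-positive, it is simply discarded.

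The main obstacle will then be to check that the collected coefficient of $\norm{\lead-\curr}^{2}$, namely $-\tfrac{1}{2}+\tfrac{1}{2}\strong\curr[\step]+ 2\curr[\step]^{2}(\coef[a]+\coef[b])^{2}\lips^{2}$, is non-positive \textendash{} this is precisely where the first condition in \eqref{eq:step-energy-strong} pulls its weight. Using $(\coef[a]+\coef[b])^{2}\leq 1$ and rewriting that condition as $2\curr[\step]^{2}\lips^{2}\leq \tfrac{1}{2}-\strong\curr[\step]$, the coefficient is bounded above by $-\tfrac{1}{2}\strong\curr[\step]\leq 0$, so the whole $\norm{\lead-\curr}^{2}$ term can be dropped. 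Combining this with the built-in slack $-2\strong\curr[\step]\curr[\pot]\leq -\strong\curr[\step]\curr[\pot]$ (which follows from the choice $\coef=2\strong$ together with $\curr[\pot]\geq 0$) would then yield \eqref{eq:energy-strong}.
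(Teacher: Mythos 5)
Your proof is correct and arrives at the same conclusion via a genuinely (if mildly) different accounting than the paper's. Both proofs rest on \cref{prop:energy} and the \eqref{eq:strong} condition, but they organize the ``budget'' differently. The paper takes $\coef\gets\strong$ and invokes \cref{lem:strong-bound} \emph{as stated}, which already builds in a two-point split (it outputs $\tfrac12\strong\norm{\curr-\sol}^2 - \strong\norm{\lead-\curr}^2$); it then has to decompose the positive $\curr[\step]^2\coef[c]^2\lips^2\norm{\lead-\sol}^2$ term \emph{again} via $\norm{\lead-\sol}^2\leq 2\norm{\lead-\curr}^2+2\norm{\curr-\sol}^2$ before collecting coefficients. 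You instead stop at the intermediate bound $\braket{\vecfield(\lead)-\coef[c]\solvec}{\lead-\sol}\geq\strong\norm{\lead-\sol}^2$, absorb the positive $\norm{\lead-\sol}^2$ term \emph{in the same variable} using the second step-size condition (which is exactly what it is calibrated for), and only then perform a single two-point split. This is a cleaner bookkeeping: one application of the two-point inequality rather than two, and the role of each step-size condition is more transparent (the second one kills the Lipschitz error in $\norm{\lead-\sol}^2$; the first one kills the $\norm{\lead-\curr}^2$ residual). One minor inefficiency: taking $\coef=2\strong$ and then discarding half the resulting $\curr[\pot]$ decrement is wasteful — the paper's choice $\coef=\strong$ already satisfies \eqref{eq:step-energy} under the given step-size condition and gives the target $-\strong\curr[\step]\curr[\pot]$ directly — but this is a harmless loosening, not an error.
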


\begin{proof}
Assume that $\lead\in\basin$ and set $\coef[c] = 1 - \coef[a] - \coef[b]$.
Then, invoking \cref{lem:strong-bound} with $\point \gets \curr$ and $\pointalt \gets \lead$, we get
\begin{flalign}
\MoveEqLeft
\braket{\vecfield(\lead) - \solvec}{\lead - \sol}
	+ (\coef[a] + \coef[b]) \braket{\solvec}{\lead-\sol}
	\notag\\
	&\geq \tfrac{1}{2} \strong \norm{\curr - \sol}^{2}
		- \strong \norm{\lead - \curr}^{2}.
\end{flalign}
Thus, taking $\coef \gets \strong$ in \cref{prop:energy} (in terms of step-size conditions, the first part of \eqref{eq:step-energy-strong} implies \eqref{eq:step-energy}) and combining with the above, the bound \eqref{eq:energy-bound} becomes
\begin{align}
\label{eq:strong-bound-energy-strong-proof}
\next[\energy]
	\leq \curr[\energy]
		- \strong \curr[\step] \curr[\pot]
		&- \tfrac{1}{2} \strong\curr[\step] \norm{\curr - \sol}^{2}
		+ \curr[\step]^{2} \coef[c]^{2} \lips^{2} \norm{\lead- \sol}^{2}
	\notag\\
	&- \tfrac{1}{2} \parens[\big]{1 - 4 \curr[\step]^{2} (\coef[a] + \coef[b])^{2} \lips^{2} - 2\strong \curr[\step]} \norm{\lead - \curr}^{2}.
\end{align}
Hence, writing $\norm{\lead - \sol}^{2} \leq 2\norm{\lead-\curr}^{2} + 2\norm{\curr-\sol}^{2}$ and rearranging, we obtain
\begin{align}
\label{eq:energy-strong-proof}
\next[\energy]
	\leq \curr[\energy]
		- \strong \curr[\step] \curr[\pot]
		&- \tfrac{1}{2} \parens[\big]{\strong \curr[\step] - 4 \curr[\step]^{2}\coef[c]^{2}\lips^{2}}
		\, \norm{\curr - \sol}^{2}
	\notag\\
	&- \tfrac{1}{2} \parens[\big]{1 - 4 \curr[\step]^{2} \parens{(\coef[a] + \coef[b])^{2} + \coef[c]^{2}} \lips^{2} - 2\strong \curr[\step]}
		\, \norm{\lead - \curr}^{2}.
\end{align}
Since $\coef[a],\coef[b],\coef[c] \geq 0$ and $\coef[a]+\coef[b]+\coef[c]=1$, we also have $(\coef[a]+\coef[b])^{2} + \coef[c]^{2} \leq 1$, so the step-size assumption \eqref{eq:step-energy-strong} guarantees that the last term in \eqref{eq:energy-strong-proof} is nonpositive.
Likewise, the second part of \eqref{eq:step-energy-strong} gives $\strong\curr[\step] - 4\curr[\step]^{2}\coef[c]^{2}\lips^{2} \geq \tfrac{1}{2}\strong\curr[\step]$, so the energy inequality \eqref{eq:energy-strong} follows and our proof is complete.
\end{proof}

We finally have all the required building blocks in place to prove \cref{thm:general}.

\begin{proof}[Proof of \cref{thm:general}]
Our proof strategy consists of the following basic steps:
\begin{enumerate}
\item
	We first show that, if the step-size of \eqref{eq:BPM} satisfies \eqref{eq:step} and $\init$ is initialized sufficiently close to $\sol$, \revise{the base and leading state sequences $\curr$ and $\lead$, $\run=\running$, both remain within the neighborhood $\legnhd\cap\basin$} of $\sol$ where \eqref{eq:strong}, \eqref{eq:Breg-lower}, and \eqref{eq:Breg-upper} all hold.

\item
By virtue of this stability result, the energy inequality \eqref{eq:energy-strong} and the definition of the Legendre exponent allow us to express $\curr[\breg] = \breg(\sol,\curr)$ as $\next[\breg] \leq \curr[\breg] - \bigof[\big]{\curr[\breg]^{1/(1-\legsol)}}$ up to an error term that vanishes at a geometric rate.
The rates \eqref{eq:rate} are then derived by analyzing this recursive inequality for $\legsol = 0$ and $\legsol>0$ respectively.
\end{enumerate}
We now proceed to detail the two steps outlined above.

\para{Step 1: Stability}
Take $\radius > 0$ such that $\ball_{\radius}^\points(\sol) \defeq \{ \point\in\points : \norm{\point - \sol}
\leq \radius \} \subset \revise{\basin \cap \nhd}$
\revise{and such that $\proxof{\point}{- \step \vecfield(\pointalt)}$ belongs to $\nhd$ for all $\point\in\ball_{\radius}^\points(\sol) \cap \proxdom$, $\pointalt \in \ball_{\radius}^\points(\sol)$, and all admissible step-sizes $\step$.
This is indeed possible by the continuity of the prox-mapping (see \refinapp{lem:proxlip}{app:aux}) and of $\vecfield$.}
Assume further that $\state_{1/2} = \init \revise{\in \ball_\radius^\points(\sol)}$ is such that $\breg(\sol,\state_{1/2}) = \breg(\sol,\init) \leq (1 - \coef) \radius^{2}/4$, where $\coef\in(0,1)$ is a constant to be determined later.
that
\begin{equation}
\label{eq:stability}
\revise{\max\braces{\norm{\beforelead - \sol}, \norm{\curr - \sol}}}
	\leq \radius
	\quad
	\text{and}
	\quad
\curr[\energy]
	\leq \prev[\energy],
\end{equation}
which will show in particular that $\lead\in\basin\cap\nhd$ for all $\run\geq\start$.
Indeed:

\begin{itemize}
\addtolength{\itemsep}{\smallskipamount}
\item
	\revise{For the base case ($\run=\start$), we have $\state_{1/2} = \state_1 \in \ball_{\radius}^\points(\sol)$}
and $\init[\energy] = \beforeinit[\energy]$ by construction, so there is nothing to show.
\item
For the induction step, assume \eqref{eq:stability} holds.
Then, \revise{since $\curr \in \ball_{\radius}^\points(\sol) \subset \nhd$,}
\eqref{eq:Breg-lower} yields
\begin{equation}
\label{eq:curr-bound}
\tfrac{1}{2} \norm{\curr - \sol}^{2}
	\leq \curr[\breg]
	\leq \curr[\energy]
	\leq \init[\energy]
	= \breg(\sol,\init)
\end{equation}
\revise{
Moreover,  both $\curr$ and $\beforelead$ are in $\ball_{\radius}^\points(\sol)$ so that, by construction, $\lead$ is still in $\nhd$.}
Now, to show that $\lead \in \ball_{\radius}^\points(\sol)$, \cref{lem:onestep} with
$\base\gets\sol$, $\point \gets \curr$, $\dvec \gets -\curr[\step]\curr[\signal]$ and $\dbase\gets - (\coef[a] + \coef[b])\curr[\step]\solvec$ gives
\begin{alignat}{2}
\lead[\breg]
	&\leq \curr[\breg]
		&&- \curr[\step] \braket{\curr[\signal] - (\coef[a]+\coef[b])\solvec}{\lead - \sol}
	\notag\\
	&\leq \curr[\breg]
		&&- \coef[a] \curr[\step] \braket{\vecfield(\curr) - \solvec}{\lead - \sol}
	\notag\\
	&
		&&- \coef[b] \curr[\step] \braket{\vecfield(\beforelead) - \solvec}{\lead - \sol}
\end{alignat}
and hence, by Young's inequality and \eqref{eq:Breg-lower}, we get
\begin{align}
\tfrac{1}{2} \norm{\lead - \sol}^{2}
	\leq \curr[\breg]
		&+ \curr[\step]^{2} \coef[a] \dnorm{\vecfield(\curr) - \solvec}^{2}
		+ \curr[\step]^{2}\coef[b] \dnorm{\vecfield(\beforelead) - \solvec}^{2}
	\notag\\
		&+ \tfrac{1}{4} (\coef[a]+\coef[b]) \norm{\lead - \sol}^{2}\,.
\end{align}
Since $ \coef[a]+\coef[b] \leq 1$, using \cref{asm:Lipschitz} and rearranging gives
\begin{align}
\label{eq:checkme}
\norm{\lead - \sol}^{2}
	&\leq 4 \curr[\breg]
		+ 4\curr[\step]^{2} \lips^{2}
			\max\braces{\norm{\curr - \sol}^{2}, \norm{\beforelead-\sol}^{2}}
	\notag\\
	&\leq (1-\coef) \radius^{2}
		+ 4\curr[\step]^{2} \lips^{2} \radius^{2}
\end{align}
where we used the fact that $\norm{\beforelead - \sol}^{2} \leq \radius^{2}$
and
$\norm{\curr - \sol}^{2} \leq 2\curr[\breg] \leq \frac{1}{2} (1-\coef)\radius^{2}$
(by the inductive hypothesis and \eqref{eq:curr-bound} respectively).
Thus, with $2\curr[\step]\lips \leq 1/\gold < 1$ by assumption, choosing $\coef = 1/\gold^{2}$ gives $\norm{\lead - \sol}^{2} \leq \radius^{2}$, which completes the first part of the induction.
Finally, for the second part,
\revise{we have $\next\in\nhd$ because $\curr,\lead$ have been shown to be in $\ball_{\radius}^\points(\sol)$}
and our step-size assumption gives
\begin{equation}
2\strong \curr[\step] + 4 \curr[\step]^{2}\lips^{2}
	\leq 2 \curr[\step]\lips + 4 \curr[\step]^{2}\lips^{2}
	\leq 1/\gold + 1/\gold^{2}
	= 1.
\end{equation}
Thus, since $\lead\in\basin$, \cref{prop:energy-strong} readily gives
\begin{equation}
\label{eq:descent}
\next[\energy]
	\leq \curr[\energy]
		- \strong \curr[\step] \curr[\pot]
		- \tfrac{1}{4} \strong \curr[\step] \norm{\curr - \sol}^{2}
	\leq \curr[\energy],
\end{equation}
and the induction is complete.
\end{itemize}

\para{Step 2: Convergence rate analysis}
From \eqref{eq:descent} and the local Legendre bound \eqref{eq:Breg-upper}, we get
\begin{equation}
\label{eq:descent-Leg}
\next[\energy]
	\leq \curr[\energy]
		- \strong \curr[\step] \curr[\pot]
		- \frac{\strong \curr[\step]}{2^{1-\leg} \legconst^{1+\leg}} \curr[\breg]^{1+\leg}
  \qquad \text{with $\leg = \legsol/(1 - \legsol)$.}
\end{equation}
We now distinguish two cases, depending on whether $\legsol=0$ or $\legsol>0$.

\begin{enumerate}
[left=1em,label={\bfseries Case \arabic*:}]
\item
If $\legsol = 0$, we have $\leg=0$ by definition and $\legconst\geq1$ by \eqref{eq:Breg-lower}.
\cref{eq:descent-Leg} then gives
\begin{equation}
\next[\energy]
	\leq \curr[\energy]
		- \frac{\strong\curr[\step]}{2\legconst} \curr[\breg]
		- \strong \curr[\step] \curr[\pot]
	\leq \parens*{1 - \frac{\strong\curr[\step]}{2\legconst}} \curr[\energy]
\end{equation}
so the case $\legexp=0$ of \eqref{eq:rate} follows immediately by setting $\curr[\step] \equiv \step$ for all $\run$.

\item
If $\legsol > 0$, then $\leg > 0$ too, so we will proceed by rewriting all terms in \cref{eq:descent-Leg} in terms of $\curr[\energy]$.
To that end, we have:
\begin{align}
\next[\energy]
	&\leq \curr[\energy]
		- \strong \curr[\step] \curr[\pot]
		- \frac{\strong \curr[\step]}{2^{1-\leg} \legconst^{1+\leg}} \curr[\breg]^{1+\leg}
	\notag\\
	&\leq \curr[\energy]
		- \frac{\strong \curr[\step]}{\breg(\sol,\init)^{\leg}} \curr[\pot]^{1+\leg}
		- \frac{\strong \curr[\step]}{2^{1-\leg} \legconst^{1+\leg}} \curr[\breg]^{1+\leg}
	\notag\\
	&\leq \curr[\energy]
		- \frac{\strong\curr[\step]}{\max(2^{1-\leg}\legconst^{1+\leg}, \breg(\sol,\init)^\leg)}
			\bracks*{\breg(\sol, \curr)^{1+\leg}+\curr[\pot]^{1+\leg}}
	\notag\\
	&\leq \curr[\energy]
		- \frac{\strong\curr[\step]}{\max(2\legconst^{1+\leg},2^{\leg}\breg(\sol,\init)^\leg)}
			\curr[\energy]^{1+\leg}
\end{align}
where,
in the second line, we used \eqref{eq:stability} to get $\curr[\pot] \leq \breg(\sol,\curr) + \curr[\pot] \leq \breg(\sol,\init)$,
and,
in the last line, we used the convexity of $\point^{1+\leg}$.
The case $\legexp\in(0,1)$ of \eqref{eq:rate} then follows from Lemma 6 of \cite[p.~46]{Pol87} (recreated \revise{as }\refinapp{lem:Polyak}{app:aux}).
\end{enumerate}
\end{proof}

\section{Finer results for linearly constrained problems}
\label{sec:sharp}

Our goal in this last section is to take a closer look at the convergence rate of \eqref{eq:BPM} for different solution configurations that arise in linearly constrained problems.
To that end, we begin by revisiting the examples of \cref{sec:examples}.

\subsection{Motivating examples, redux}
\label{sec:examples-sharp}

A common feature of \crefrange{ex:Eucl}{ex:Hell} is that the problem's defining vector field vanishes at the solution point under study.
In the series of examples below, we examine the rate of convergence achieved when this is not the case.


\begin{example}[Euclidean regularization]
\label{ex:Eucl-sharp}
Consider again the quadratic regularizer of \cref{ex:Eucl} over $\points = [0,\infty)$, but with $\vecfield(\point) = \point + 1$.
The solution of \eqref{eq:VI} is still $\sol = 0$ but the update \eqref{eq:MD-generic} now becomes
\begin{equation}
\label{eq:MD-Eucl-sharp}
\fixmap(\point)
	= \pospart{\point - \step(\point + 1)}
	= \pospart{(1-\step) \point -\step}\,.
\end{equation}
Since $\fixmap(\point) = 0$ for all sufficiently small $\point>0$, we readily conclude that $\curr$ converges to $\sol$ in a \emph{finite} number of iterations.
\hfill
\endenv
\end{example}



\begin{example}[Entropic regularization]
\label{ex:ent-sharp}
Under the entropic regularizer of \cref{ex:ent}, and taking again $\vecfield(\point) = \point+1$, the update rule \eqref{eq:MD-generic} becomes
\begin{equation}
\label{eq:MD-ent-sharp}
\fixmap(\point)
	= \point \exp(-\step(\point + 1))
	= \point e^{- \step} + o(\point)
	\sim \point e^{-\step}
\end{equation}
\ie $\fixmap$ is a contraction for small $\point > 0$.
Hence, in contrast to \cref{ex:ent}, $\curr$ converges to $0$ at a geometric rate, even though the problem's solution lies on the boundary of $\points$.
\hfill
\endenv
\end{example}



\begin{example}[Fractional power]
\label{ex:frac-sharp}
Finally, consider the fractional power regularizer of \cref{ex:frac}, again with $\vecfield(\point) = \point + 1$.
Then, for $\qexp \in (0,1)$, the update rule \eqref{eq:MD-generic} gives
\begin{align}
\fixmap(\point)
	&= \bracks{\point^{\qexp -1} + \step(1-\qexp)(\point + 1)}^{1/(\qexp-1)}
	= \point - \step\point^{2-\qexp} + o(\point^{2-\qexp})
\end{align}
for small $\point>0$.
Thus, by \cref{lem:basicnum}, we get 
that $\curr$ converges to $0$ as $\abs{\curr - \sol} = \Theta\parens[\big]{\run^{-1/(1-\qexp)}}$ and 
$\breg(\sol,\curr) = \Theta\parens[\big]{\run^{-\qexp/(1-\qexp)}}$, which is again faster than the rate 
given by \cref{thm:general}.
\hfill
\endenv
\end{example}


\Crefrange{ex:Eucl-sharp}{ex:frac-sharp} show that the convergence rate of \eqref{eq:BPM} \revise{when $\sol$ in the boundary} can change drastically depending on whether $\vecfield(\sol)$ is zero or not.
In the example below, we examine in more detail the behavior of the individual coordinates of $\curr$ as a function of the position of $\vecfield(\sol)$ relative to $\points$.


\begin{example}[Higher-dimensional simplices]
\label{ex:simplex-2d}
Consider the canonical two-di\-men\-sional simplex $\points = \setdef{(\point_{1}, \point_{2}, \point_3)\in \R_{+}^{3}}{\point_{1} + \point_{2} + \point_{3} = 1}$ of $\R^{3}$
equipped with the entropic regularizer
$\hreg(\point) = \sum_{\coord=1}^{3} \point_{\coord} \log\point_{\coord}$.
Consider also the vector field $\vecfield(\point) = \point - \base$ with $\base=(-\slack_{1}, -\slack_{2},1)$ for some $\slack_{1}, \slack_{2} \geq 0$, so the solution of \eqref{eq:VI} is $\sol = (0,0,1)$, an extreme point of $\points$.

Since the Legendre exponent of $\hreg$ at $\sol$ is easily seen to be $\legof{\sol} = 1/2$, \cref{thm:general} would indicate a rate of convergence of $\breg(\sol,\curr) = \bigoh(1/\run)$ or, in terms of norms, $\norm{\curr - \sol} = \bigoh(1/\run)$.
However, this rate can be very pessimistic if, for example, $\slack_{1} > 0$.
Indeed, in this case, since $\curr$ converges to $\sol = (0,0,1)$, the relevant coordinates of $\vecfield(\curr)$ will evolve as $\vecfield_{1}(\curr) = \state_{1,\run} + \slack_{1} = \slack_{1} + o(1)$ and $\vecfield_{3}(\curr) = \state_{3,\run} - 1 = o(1)$.
Accordingly, since entropic regularization on the simplex leads to the exponential weights update \cite{BecTeb03}
\begin{equation}
\label{eq:EW}
\state_{\coord,\run+1}
	\propto \state_{\coord,\run} \exp\parens*{-\step \vecfield_{\coord}(\curr)}
	\quad
	\text{for all $\run \geq \start$, $\coord = 1,2,3$},
\end{equation}
the fact that $\lim_{\run\to\infty}\state_{3,\run} = 1$ readily yields
\begin{align}
\state_{1,\run+1}
	\sim \frac{\state_{1,\run+1}}{\state_{3,\run+1}}
	&= \frac{\state_{1,\run}}{\state_{3,\run}}
		\exp\parens*{-\step \vecfield_{1}(\curr) + \step \vecfield_{3}(\curr)}
	= \frac{\state_{1,\run}}{\state_{3,\run}}
		\exp\parens*{-\step \slack_{1} + o(1)}
\end{align}
\ie $\state_{1,\run}$ converges to $0$ at a \emph{geometric} rate whenever $\slack_{1} > 0$.

By symmetry, the argument above yields the same rate for $\state_{2,\run}$ if $\slack_{2} > 0$.
However, as we show \refapp{app:ex}, if $\slack_{2} = 0$, we would have $\state_{2,\run} = \Theta(1/\run)$ no matter the value of $\slack_{1}$ (and likewise for the rate of $\state_{1,\run}$ if $\slack_{1}=0$).
In other words, the rate provided by \cref{thm:general} is tight for the coordinate $\coord \in \{1,2\}$ with a vanishing drift coefficient $\slack_{\coord}$, but not otherwise;
we will devote the rest of this section to deriving a formal statement (and proof) of the general principle underlying this observation.
\hfill
\endenv
\end{example}


\subsection{Linearly constrained problems}

For concreteness, we focus below on linearly constrained problems 
\revise{ where the different convergence behaviors outlined in the previous examples can be characterized in a precise manner.}
To lighten notation, we identify $\pspace$ with $\R^{\nCoords}$ endowed with the Euclidean scalar product $\braket{\cdot}{\cdot}$, and we will not distinguish between primal and dual vectors (so the distinction between normal and polar cones is likewise blurred).

Formally, we consider polyhedral domains written in \revise{standard} form as
\begin{equation}
\label{eq:polyhedron}
\points
	= \setdef{\point \in \R_{+}^{\nCoords}}{\mat \point = \cvec}
\end{equation}
for some matrix $\mat\in\R^{\nConstr\times\nCoords}$ and $\cvec\in\R^{\nCoords}$.%
\footnote{Inequality constraints of the form $\mat\point\leq\cvec$ can also be accommodated in \eqref{eq:polyhedron} by introducing the associated slack variables $s = \cvec - \mat\point \geq 0$.
Even though this leads to a more verbose presentation of $\points$, the form \eqref{eq:polyhedron} is much more convenient in terms of notational overhead, so we stick with the equality formulation throughout.}
Moreover, to avoid trivialities, we further assume that $\points$ admits a \emph{Slater point}, \ie there exists some $\point\in\points$
such that $\point_{\coord} > 0$ for all $\coord=1,\dotsc,\nCoords$.
This setup is particularly flexible, as it allows us to identify the \emph{active} constraints at $\point\in\points$ with the zero components of $\point$.

Elaborating further on this, since $\braket{\vecfield(\sol)}{\point-\sol} \geq 0$ for all $\point\in\points$ and any solution $\sol$ of \eqref{eq:VI}, we directly infer that $-\solvec$ is an element of the normal cone $\ncone(\sol)$ to $\points$ at $\sol$.
In our polyhedral setting, $\ncone(\sol)$ admits an especially simple representation as
\begin{equation}
\label{eq:ncone-sharp}
\ncone(\sol)
	= \row(\mat)
		- \setdef{(\slack_{1},\dotsc,\slack_{\nCoords})\in\R_{+}^{\nCoords}}{\slack_{\coord}=0 \text{ whenever } \sol_{\coord} = 0}
\end{equation}
where $\row(\mat) = (\ker\mat)^{\perp} \subseteq \R^{\nCoords}$ denotes the row space of $\mat$ \cite[Ex.~5.2.6]{HUL01}.
As a result, we see that $\sol$ is a solution of \eqref{eq:VI} if and only if $\vecfield(\sol)$ can be written in the form
\begin{equation}
\label{eq:slacks}
\solvec - \sum_{\coord\in\actcoords} \slack_{\coord}\bvec_{\coord}
	\in \row(\mat)
\end{equation}
for an ensemble of non-negative \emph{slackness coefficients} $\slack_{\coord} \geq 0$, $\coord\in\actcoords$, where
\begin{equation}
\label{eq:actcoords}
\actcoords
	\equiv \actcoords(\sol)
	= \setdef{\coord}{\sol_{\coord} = 0}
\end{equation}
denotes the set of inequality constraints of \eqref{eq:polyhedron} that are active at $\sol$.
\revise{With all of this in mind}, \revise{we distinguish} the following solution configurations (see also \cref{fig:simplex}).

\begin{definition}[Sharpness]
\label{def:sharp}
Let $\sol\in\points$ be a solution of \eqref{eq:VI} with associated slackness coefficients $\slack_{\coord}$, $\coord\in\actcoords$, as per \eqref{eq:slacks}.
The set of \emph{sharp} \textpar{$\sharp$} and \emph{flat} \textpar{$\flat$} directions at $\sol$ are respectively defined as
\begin{equation}
\label{eq:sharp}
\sharps
	= \setdef{\coord\in\actcoords}{\slack_{\coord} > 0}
	\qquad
	\text{and}
	\qquad
\flats
	= \setdef{\coord\in\actcoords}{\slack_{\coord} = 0},
\end{equation}
and we say that $\vecfield$ is \emph{sharp} at $\sol$ if $\sharps = \actcoords$ \textpar{or, equivalently, if $\flats = \varnothing$}.
The \emph{sharpness} of $\vecfield$ at $\sol$ is then defined as
\begin{equation}
\label{eq:drift}
\drift
	= \min\nolimits_{\coord\in\sharps} \slack_{\coord}
\end{equation}
and, finally, if $\sol$ is an extreme point of $\points$, we will say that $\sol$ is itself \emph{sharp}.
\end{definition}

The terminology ``sharp'' and ``flat'' alludes to the case where $\vecfield$ is a gradient field, and is best illustrated by an example.
To wit, let $\obj(\point_{1},\point_{2}) = \point_{1} + \tfrac{1}{2}(\point_{2}-1)^{2}$ for $\point_{1},\point_{2}\geq0$, so $\obj$ admits a (unique) global minimizer at $\sol = (0,1)$.
Applying \cref{def:sharp} to $\vecfield = \nabla\obj$, we readily get $\sharps = \{1\}$ and $\flats=\{2\}$, reflecting the fact that $\obj(\point_{1},1)$ exhibits a sharp minimum at $0$ along $\point_{1}$ whereas the landscape of $\obj(0,\point_{2})$ is flat to first-order around $1$ along $\point_{2}$.


\begin{figure}
\centering
%
%
\begin{tikzpicture}
[scale=.75,
>=stealth,
edgestyle/.style={-, line width=.75pt},
vecstyle/.style = {->, line width=.75pt},
nodestyle/.style={circle, fill=black,inner sep = .5pt},
plotstyle/.style={color=DarkGreen!80!Cyan,thick}]

\def\unit{1.25}
\def\costhirty{0.8660256}
\def\tanthirty{0.57735026919}
\def\upangle{60}
\def\downangle{0}
\def\conescale{1.75}
\small

\coordinate (O) at (0,-\unit/6);

\coordinate (P1) at (-2*\unit*\costhirty,-\unit) {};
\coordinate (P2) at (2*\unit*\costhirty,-\unit) {};
\coordinate (P3) at (0,2*\unit) {};

\coordinate (sol) at ($.5*(P3)+.5*(P1)$);
\coordinate (X) at (1.4*\unit,0.1*\unit);


\fill [PrimalFill] (P1) -- (P2) -- (P3) -- cycle;

\draw [edgestyle, thick, DualColor] (sol.center) -- ++(\upangle+90:\conescale*\unit) node [near end,above right] {$\ncone(\sol)$};

\vphantom{
\fill [DualColor!5] (P3)++(\upangle+90:\conescale*\unit) -- (P3) --++(\downangle+30:\conescale*\unit);
}

\node (X) at (X) [PrimalColor] {$\points$};

\draw [->, gray!50] (O) to (P1);
\draw [->, gray!50] (O) to (P2);
\draw [->, gray!50] (O) to (P3);


\draw [edgestyle,PrimalColor] (P1) -- (P2) -- (P3) -- (P1);

\node [nodestyle, label = right:$\sol$] (sol) at (sol) {.};
\draw [vecstyle] (sol.center) -- ($(sol)+(-1,\tanthirty)$) node [midway, below left, black] {$-\solvec$};

\end{tikzpicture}
\quad
%
%
\begin{tikzpicture}
[scale=.75,
>=stealth,
edgestyle/.style={-, line width=.75pt},
vecstyle/.style = {->, line width=.75pt},
nodestyle/.style={circle, fill=black,inner sep = .5pt},
plotstyle/.style={color=DarkGreen!80!Cyan,thick}]

\def\unit{1.25}
\def\costhirty{0.8660256}
\def\tanthirty{0.57735026919}
\def\upangle{60}
\def\downangle{0}
\def\conescale{1.75}
\small

\coordinate (O) at (0,-\unit/6);

\coordinate (P1) at (-2*\unit*\costhirty,-\unit) {};
\coordinate (P2) at (2*\unit*\costhirty,-\unit) {};
\coordinate (P3) at (0,2*\unit) {};

\coordinate (sol) at ($(P3)$);
\coordinate (X) at (1.4*\unit,0.1*\unit);


\fill [PrimalFill] (P1) -- (P2) -- (P3) -- cycle;

\draw [edgestyle, thick, DualColor] (sol.center) -- ++(\upangle+90:\conescale*\unit) node [midway,below left] {$\ncone(\sol)$};
\fill [DualColor!5] (sol)++(\upangle+90:\conescale*\unit) -- (sol) --++(\downangle+30:\conescale*\unit);
\draw [edgestyle, thick, DualColor] (sol.center) -- ++(\upangle+90:\conescale*\unit) node {};
\draw [edgestyle, thick, DualColor] (sol.center) -- ++(\downangle+30:\conescale*\unit);

\node (X) at (X) [PrimalColor] {$\points$};

\draw [->, gray!50] (O) to (P1);
\draw [->, gray!50] (O) to (P2);
\draw [->, gray!50] (O) to (P3);


\draw [edgestyle,PrimalColor] (P1) -- (P2) -- (P3) -- (P1);

\node [nodestyle, label = right:$\sol$] (sol) at (sol) {.};
\draw [vecstyle] (sol.center) -- ($(sol)+(-\unit,\tanthirty*\unit)$) node [very near end,right,black] {\;$-\solvec$};

\end{tikzpicture}
\quad
%
%
\begin{tikzpicture}
[scale=.75,
>=stealth,
edgestyle/.style={-, line width=.75pt},
vecstyle/.style = {->, line width=.75pt},
nodestyle/.style={circle, fill=black,inner sep = .5pt},
plotstyle/.style={color=DarkGreen!80!Cyan,thick}]

\def\unit{1.25}
\def\costhirty{0.8660256}
\def\tanthirty{0.57735026919}
\def\upangle{60}
\def\downangle{0}
\def\conescale{1.75}
\small

\coordinate (O) at (0,-\unit/6);

\coordinate (P1) at (-2*\unit*\costhirty,-\unit) {};
\coordinate (P2) at (2*\unit*\costhirty,-\unit) {};
\coordinate (P3) at (0,2*\unit) {};

\coordinate (sol) at ($(P3)$);
\coordinate (X) at (1.4*\unit,0.1*\unit);


\fill [PrimalFill] (P1) -- (P2) -- (P3) -- cycle;

\draw [edgestyle, thick, DualColor] (sol.center) -- ++(\upangle+90:\conescale*\unit) node [midway,below left] {$\ncone(\sol)$};
\fill [DualColor!5] (sol)++(\upangle+90:\conescale*\unit) -- (sol) --++(\downangle+30:\conescale*\unit);
\draw [edgestyle, thick, DualColor] (sol.center) -- ++(\upangle+90:\conescale*\unit) node {};
\draw [edgestyle, thick, DualColor] (sol.center) -- ++(\downangle+30:\conescale*\unit);

\node (X) at (X) [PrimalColor] {$\points$};

\draw [->, gray!50] (O) to (P1);
\draw [->, gray!50] (O) to (P2);
\draw [->, gray!50] (O) to (P3);


\draw [edgestyle,PrimalColor] (P1) -- (P2) -- (P3) -- (P1);

\node [nodestyle, label = right:$\sol$] (sol) at (sol) {.};
\draw [vecstyle] (sol.center) -- ($(sol)+(1/2,1)$) node [near end,left,black] {$-\solvec$};

\end{tikzpicture}
\hspace{2em}
\caption{%
\beginrev
Different boundary solution configurations on the $2$-dimensional unit simplex $\points = \setdef{(\point_{1},\point_{2},\point_{3})\in\R_{+}^{3}}{\point_{1} + \point_{2} + \point_{3} = 1}$ of $\R^{3}$:
a non-extreme solution where $\vecfield$ is sharp
($\actcoords = \sharps = \braces*{1}$, $\flats = \varnothing$; left),
an extreme solution where $\vecfield$ is not sharp
($\actcoords = \braces{1,2}$, $\sharps = \braces{1}$, $\flats=\braces*{2}$; middle),
and
a sharp solution ($\actcoords = \sharps = \braces*{1, 2}$, $\flats = \varnothing$; right).}
\label{fig:simplex}
\end{figure}


\subsection{Convergence rate analysis}
\label{sec:rate-sharp}

We are now in a position to state and prove our refinement of \cref{thm:general} for linearly constrained problems.
To that end, following \cite{ABB04}, we will assume in the rest of this section that \eqref{eq:BPM} is run with a Bregman regularizer $\hreg$ that is adapted to the polyhedral structure of $\points$ as per the definition below:

\begin{definition}
\label{def:decomposable}
Let $\points$ be a polyhedral domain of the form \eqref{eq:polyhedron}.
Then, a Bregman regularizer $\hreg$ on $\points$ is said to be \emph{decomposable with kernel $\hker$} if
\begin{equation}
\label{eq:decomposable}
\hreg(\point)
	= \sum_{\coord=1}^{\nCoords} \hker(\point_{\coord})
	\quad
	\text{for all $\point\in\points$}.
\end{equation}
for some continuous function $\hker\from\R_{+}\to\R$ such that
\begin{enumerate*}
[\upshape(\itshape i\hspace*{1pt}\upshape)]
\item
$\hker''(z)>0$ for all $z>0$;
and
\item
$\hker''$ is locally Lipschitz on $(0,\infty)$.
\end{enumerate*}
\end{definition}

In addition to facilitating calculations, the notion of decomposability will further allow us to describe the convergence rate of the iterates of \eqref{eq:BPM} near the boundary of $\points$ in finer detail.
In fact, as it turns out, the speed of convergence along a given direction will actually be determined by the behavior of the derivative of the Bregman kernel $\hker$ near $0$.

In this regard, there are two distinct regimes to consider.
First, if $\lim_{\point\to0^{+}}\hker'(\point) = -\infty$, it is straightforward to see that $\dom\subd\hreg = \relint\points$ so, by \refinapp{lem:mirror}{app:aux}, the iterates $\curr$ of \eqref{eq:BPM} will remain in $\relint\points$ for all $\run$;
in this case $\hreg$ is essentially smooth \textendash\ or \emph{Legendre} \textendash\ in the sense of \cite[Chap.~26]{Roc70}, and we will refer to it as \emph{steep}.
Otherwise, if $\hker'(0)$ exists and is finite, $\curr$ may reach the boundary of $\points$ in a finite number of iterations;
we will refer to this case as \emph{non-steep}.
The key difference between these two regimes is that, in the non-steep case, the algorithm may achieve convergence in a finite number of steps (at least along certain directions).
On the other hand, even though finite-time convergence is not possible in the steep regime, the algorithm's rate of convergence may still depend on the boundary behavior of $\hker$.
To illustrate this, we will consider the following concrete cases:

\begin{assumption}
\label{asm:ker}
If $\hker\from\R_{+}\to\R$ is a kernel function as per \cref{def:decomposable},
\revise{$\hker'$ exhibits one of the following behaviors as $\point\to0^{+}$:}
\begin{enumerate}
[\upshape(\itshape a\hspace*{.5pt}\upshape)]
\item
\label[case]{asm:ker-Eucl}
\emph{Euclidean-like:}
	\tabto{7em}
	$\liminf_{\point\to0^{+}} \hker'(\point) > -\infty$.
\item
\label[case]{asm:ker-log}
\emph{Entropy-like:}
	\tabto{7em}
    \revise{$\liminf_{\point\to0^{+}} \bracks{\hker'(\point) - \log\point} > -\infty$.}
\item
\label[case]{asm:ker-power}
\emph{Power-like:}
	\tabto{7em}
	$\liminf_{\point\to0^{+}} \point^{\kernelexp} \hker'(\point) > -\infty$ for some $\kernelexp\in(0,1)$.
\end{enumerate}
\end{assumption}

\begin{remark*}
\Crefrange{asm:ker-Eucl}{asm:ker-power} above respectively mean that $\abs{\hker'(\point)}$ grows as $\bigoh(1)$, $\bigoh(\abs{\log\point})$ or $\bigoh(1/\point^{\kernelexp})$ as $\point\to0^{+}$.
Clearly, we have \ref{asm:ker-Eucl}$\implies$\ref{asm:ker-log}$\implies$\ref{asm:ker-power} so these cases are not exclusive;
nonetheless, to avoid overloading the presentation, when we assume \ref{asm:ker-log}, we will tacitly imply that \labelcref{asm:ker-Eucl} does not also hold at the same time \textendash\ and likewise for \ref{asm:ker-power}.
\hfill
\endenv
\end{remark*}

With all this in hand, we proceed below to show that, in linearly constrained problems, \eqref{eq:BPM} converges along sharp directions at $\sol$ at an accelerated rate relative to \cref{thm:general}:
sublinear rates may become linear, and linear rates transform to convergence in finite time.

\begin{theorem}
\label{thm:sharp}
Suppose that \eqref{eq:BPM} is run in a polyhedral domain with a decomposable regularizer as per \cref{def:decomposable}.
Suppose further that \cref{asm:Lipschitz,asm:strong,asm:signal-base,asm:ker} hold, and that the method's step-size and initialization satisfy the requirements of \cref{thm:general}.
Then, for all $\coord\in\sharps$, we have:
\smallskip
\begin{subequations}
\label{eq:rate-sharp}
\begin{enumerate}
[\upshape(\itshape a\hspace*{.5pt}\upshape)]
\item
Under \cref{asm:ker}\ref{asm:ker-Eucl}:
there exists some $\nRuns \geq \start$ such that
\begin{align}
\label{eq:rate-sharp-Eucl}
\state_{\coord,\run}
	&= 0
	\quad
	\text{for all $\run\geq\nRuns$}
\shortintertext{%
\item
Under \cref{asm:ker}\ref{asm:ker-log}:}
\label{eq:rate-sharp-log}
\state_{\coord,\run}
	&= \bigoh\parens[\big]{\exp(-\step\drifteff\run/2)}
\shortintertext{%
\item
Under \cref{asm:ker}\ref{asm:ker-power}:}
\label{eq:rate-sharp-power}
\state_{\coord,\run}
	&= \bigoh\parens[\big]{(\step\drifteff\run/2)^{-1/\kernelexp}}
\end{align}
\end{enumerate}
\end{subequations}
where
\begin{equation}
\label{eq:drift-eff}
\drifteff
	= \begin{cases}
		\drift
			&\quad
			\text{if $\vecfield$ is sharp at $\sol$ \textpar{\ie $\flats=\varnothing$}},
			\\
		\drift/\varrho
			&\quad
			\text{otherwise},
	\end{cases}
\end{equation}
and $\varrho \equiv \varrho(\mat,\cvec,\sol) \geq 1$ is a positive constant that depends only on $\points$ and $\sol$.
\end{theorem}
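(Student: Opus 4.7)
The plan is to exploit the decomposable structure $\hreg(\point) = \sum_{\coord} \hker(\point_{\coord})$ to turn the analysis into a coordinate-wise recursion on the scalar quantity $\hker'(\state_{\coord, \run})$, and then to invert its growth by way of \cref{asm:ker}. Writing out the KKT conditions for the prox-step $\next = \proxof{\curr}{-\curr[\step]\lead[\signal]}$ on the polyhedron \eqref{eq:polyhedron}, there exist multipliers $\curr[\mu] \in \R^{\nConstr}$ (for the equality $\mat\next = \cvec$) and $\curr[\nu] \in \R_{+}^{\nCoords}$ (for $\next \geq 0$, with $\curr[\nu]_{\coord}\next_{\coord} = 0$) such that, for every coordinate $\coord$,
\begin{equation*}
\hker'(\next_{\coord}) + \curr[\nu]_{\coord} = \hker'(\curr_{\coord}) - \curr[\step]\lead[\signal]_{\coord} + (\mat^{\top}\curr[\mu])_{\coord}.
\end{equation*}
Under \ref{asm:ker-log} or \ref{asm:ker-power}, $\hker'(0^{+}) = -\infty$ forces $\next_{\coord} > 0$ and $\curr[\nu]_{\coord} = 0$ throughout; under \ref{asm:ker-Eucl}, $\hker'(0^{+})$ is finite, and this is precisely the mechanism that will trigger finite-time identification.

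Telescoping this identity and substituting $\solvec = \mat^{\top}\intcst + \sum_{\coordalt \in \actcoords}\slack_{\coordalt}\bvec_{\coordalt}$ together with $\iterlead[\signal]_{\coord} = \solvec_{\coord} + O(\norm{\iterlead - \sol})$ yields, for every $\coord \in \sharps$ (in the steep cases),
\begin{equation*}
\hker'(\curr_{\coord}) - \hker'(\init_{\coord})
= -\slack_{\coord}\sum_{\runalt<\run}\iter[\step]
+ \Bigl(\mat^{\top}\sum_{\runalt<\run}(\iter[\mu] - \iter[\step]\intcst)\Bigr)_{\coord}
+ E_{\run},
\end{equation*}
where the remainder $E_{\run}$ gathers the Lipschitz residuals $\sum_{\runalt<\run}\iter[\step]\,O(\norm{\iterlead - \sol})$ and stays bounded thanks to the convergence rate of \cref{thm:general}. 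The drift term $-\slack_{\coord}\sum\iter[\step]$ grows linearly like $-\step\slack_{\coord}\run$, so the whole question reduces to controlling the cumulative multiplier residual.

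This is the main obstacle. When $\vecfield$ is sharp at $\sol$ (so $\flats = \varnothing$), the KKT identities on the inactive coordinates $\coord \notin \actcoords$ (where $\state_{\coord, \run} \to \sol_{\coord} > 0$ and $\hker'(\state_{\coord, \run})$ stays bounded) give $(\mat^{\top}(\iter[\mu] - \iter[\step]\intcst))_{\coord} \to 0$; combined with a non-degeneracy argument (the relevant sub-matrix of $\mat^{\top}$ has full column rank whenever $\mat$ has full row rank), this pins the residual down to $o(\run)$ and yields $\drifteff = \drift$. In the presence of flat directions, the multipliers $\iter[\mu]$ can reroute a fraction of the drift through the flat indices, on which $\hker'(\state_{\coord, \run}) \to -\infty$ as well (but without the $\slack_{\coord}$ drive). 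A direct geometric calculation bounds the rerouted mass on every sharp coordinate by $(1 - \varrho^{-1})\iter[\step]\drift + o(\iter[\step])$, where $\varrho \equiv \varrho(\mat, \cvec, \sol) \geq 1$ encodes the conditioning between the sharp and flat sub-blocks of $\mat$ on the active face of $\sol$. This yields $\hker'(\state_{\coord, \run}) \leq -\step\drifteff\run/2 + O(1)$ with $\drifteff = \drift/\varrho$, matching the expression in the theorem.

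Once this uniform bound on $\hker'(\state_{\coord, \run})$ is available, the three rate estimates follow by inverting $\hker'$ with the aid of \cref{asm:ker}: under \ref{asm:ker-log}, $\hker'(z) \sim \log z$ near $0$ turns the linear negative growth on the left into the geometric decay $\state_{\coord, \run} = O(\exp(-\step\drifteff\run/2))$; under \ref{asm:ker-power}, $\hker'(z) \sim -z^{-\kernelexp}$ turns it into $\state_{\coord, \run} = O((\step\drifteff\run/2)^{-1/\kernelexp})$; and under \ref{asm:ker-Eucl}, the finiteness of $\hker'(0^{+})$ ensures that once the right-hand side of the KKT identity drops below $\hker'(0^{+})$, complementary slackness forces $\next_{\coord} = 0$, and a monotonicity argument on the driving term prevents the iterate from re-entering the positive cone on that coordinate, yielding the finite-time identification horizon $\nRuns$. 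The crux of the whole argument is thus the bound on the cumulative multipliers and the derivation of $\varrho$; everything else is a careful but fairly standard combination of KKT manipulations and the scalar asymptotic tools already developed in \cref{sec:examples}.
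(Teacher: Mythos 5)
Your high-level intuition is on the right track—the coordinate-wise recursion on $\hker'(\state_{\coord,\run})$, telescoping, and the inversion step via \cref{asm:ker} all match the spirit of the paper's argument—but there is a genuine gap at the step you yourself identify as ``the main obstacle'': controlling the cumulative multiplier residual $\sum_{\runalt<\run}\bigl(\mat^{\top}(\iter[\mu]-\iter[\step]\intcst)\bigr)_{\coord}$. You propose to recover the multipliers from the inactive coordinates via a full-column-rank argument, but no rank or non-degeneracy assumption on $\mat$ is available in the theorem, and in the presence of flat directions your bound on the ``rerouted mass'' is asserted as ``a direct geometric calculation'' without any substance. This is precisely where the proof has to do actual work, and the sketch does not show how that work would be done.

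The paper's proof sidesteps this entirely by never looking at a single coordinate's $\hker'$ in isolation. Using \cref{lem:mirror}, the update can be written as an inclusion $\subsel\hreg(\next)-\subsel\hreg(\curr)+\step\lead[\signal]\in\row(\mat)$, and the key move is to pair this (after telescoping) with a vector $\pvec\in\ker\mat$—so the entire $\row(\mat)$ component, hence the cumulative multiplier, is \emph{annihilated}, not bounded. The existence of a $\pvec$ with the right sign pattern ($\pvec_{\coord}=0$ on an already-controlled set $\coords$, $1\leq\pvec_{\coord}\leq\polycst$ on $\actcoords\setminus\coords$) is exactly what the Farkas-type separation result \cref{lem:separation} delivers; when no such $\pvec$ exists, the first alternative of that lemma shows the new coordinate is dominated by ones already controlled. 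This sets up the inductive construction of $\coords_{\sharp}\cup\coords_{\flat}$ that eventually covers $\sharps$, and it is how the constant $\varrho$ actually arises (as $\polycst\abs{\actcoords}$ coming from the separation constant and the counting of active indices, not from any conditioning between sharp and flat sub-blocks of $\mat$). The non-steep case in the paper is likewise handled through the same separation lemma by a contradiction argument on the normal cone, rather than a monotonicity claim preventing re-entry. In short, to repair your proof you would need to either (i) impose a non-degeneracy hypothesis not present in the theorem, or (ii) replace the per-coordinate multiplier bookkeeping with the kernel-pairing / Farkas-alternative machinery the paper uses.
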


In particular, if $\sol$ is sharp, we have the following immediate corollary of \cref{thm:sharp}.

\begin{corollary}
\label{cor:sharp}
If $\sol$ is sharp, then $\norm{\curr - \sol} \to 0$ at a rate given by
\eqref{eq:rate-sharp-Eucl}, \eqref{eq:rate-sharp-log}, or \eqref{eq:rate-sharp-power},
depending respectively on whether \cref{asm:ker-Eucl}, \labelcref{asm:ker-log}, or \labelcref{asm:ker-power} of \cref{asm:ker} holds.
\end{corollary}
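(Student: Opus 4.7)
The plan is to deduce the corollary from \cref{thm:sharp} by transferring the per-coordinate rates along sharp directions to the full Euclidean error $\norm{\curr-\sol}$. We read ``$\sol$ sharp'' as meaning both that $\sol$ is an extreme point of $\points$ \emph{and} that $\vecfield$ is sharp at $\sol$ in the sense of \cref{def:sharp}; this is the standard strictly complementary vertex configuration, and it guarantees $\sharps=\actcoords$, $\flats=\varnothing$, and $\drifteff=\drift>0$, so that \cref{thm:sharp} controls every component $\state_{\coord,\run}$, $\coord\in\actcoords$, at the announced rate.

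The main substantive step is a purely linear-algebraic bound on the inactive components. Decomposing $\mat=[\mat_{\actcoords}\mid\mat_{\comp\actcoords}]$ along active versus inactive columns (and analogously for vectors), the feasibility of both $\curr$ and $\sol$ yields
\begin{equation}
\mat_{\comp\actcoords}\,(\state_{\comp\actcoords,\run}-\sol_{\comp\actcoords})
= -\mat_{\actcoords}\,\state_{\actcoords,\run}.
\end{equation}
Since $\sol$ is a vertex of \eqref{eq:polyhedron}, the columns of $\mat_{\comp\actcoords}$ are linearly independent (classical characterization of extreme points of polyhedra in standard form), so $\mat_{\comp\actcoords}$ admits a left inverse and one obtains a Lipschitz-type estimate $\norm{\state_{\comp\actcoords,\run}-\sol_{\comp\actcoords}}\leq \Const\,\norm{\state_{\actcoords,\run}}$ for a constant $\Const=\Const(\mat,\sol)$.

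Combining this with $\sol_{\actcoords}=0$ gives
\begin{equation}
\norm{\curr-\sol}^{2}
= \norm{\state_{\actcoords,\run}}^{2}
+ \norm{\state_{\comp\actcoords,\run}-\sol_{\comp\actcoords}}^{2}
\leq (1+\Const^{2})\sum_{\coord\in\actcoords}\state_{\coord,\run}^{2}.
\end{equation}
Applying \cref{thm:sharp} in each of the three regimes of \cref{asm:ker} bounds every $\state_{\coord,\run}$ for $\coord\in\sharps=\actcoords$ respectively by $0$ for $\run$ large enough, by $\bigoh(\exp(-\step\drifteff\run/2))$, or by $\bigoh((\step\drifteff\run/2)^{-1/\kernelexp})$. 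Since the sum above runs over a finite index set, each rate transfers intact to $\norm{\curr-\sol}$, which is precisely the content of the corollary.

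The main (mild) obstacle is bookkeeping rather than analysis: it lies in reconciling the two notions of sharpness in \cref{def:sharp} so that the hypothesis meshes cleanly with the scope of \cref{thm:sharp}, and in justifying the extreme-point-to-full-column-rank linear-algebra step that produces the constant $\Const$. Once these two points are settled, no further estimates are required and the reduction is immediate.
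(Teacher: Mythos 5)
Your proof is correct and takes essentially the same approach as the paper: bound the inactive coordinates of $\curr-\sol$ in terms of the active (and, by the sharpness hypothesis, sharp) coordinates via the linear structure of $\points$, then invoke \cref{thm:sharp} together with norm equivalence on $\R^{\nCoords}$. The only difference is cosmetic: the paper phrases the linear-algebra step dually, first showing $\lspan\setdef{\bvec_{\coord}}{\coord\in\sharps}+\row(\mat)=\R^{\nCoords}$ through the normal-cone characterization of extreme points and then writing each $\bvec_{\coord}$ modulo $\row(\mat)$ as a combination of sharp basis vectors, whereas you invoke the equivalent basic-feasible-solution characterization (full column rank of $\mat_{\comp\actcoords}$) to produce a left inverse; both yield exactly the Lipschitz estimate $\norm{\state_{\comp\actcoords,\run}-\sol_{\comp\actcoords}}\leq\Const\,\norm{\state_{\actcoords,\run}}$ driving the conclusion.
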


\begin{proof}
First, note that $\sol$ is sharp if and only if $\lspan\setdef{\bvec_{\coord}}{\coord\in\sharps} + \row(\mat) = \R^{\nCoords}$:
indeed, since $\points$ is a polyhedron, $\sol$ is extreme if and only if $\ncone(\sol)$ has nonempty topological interior, and this, combined with \eqref{eq:ncone-sharp} and the fact that $\sharps = \actcoords$ (since $\vecfield$ is sharp at $\sol$), proves our assertion.
We thus conclude that, for all $\coord = 1,\dotsc,\nCoords$, there exist $\coef_{\coord\coordalt} \in \R$, $\coordalt\in\sharps$, such that $\bvec_{\coord} - \sum_{\coordalt\in\sharps} \coef_{\coord\coordalt} \bvec_{\coordalt} \in \row(\mat)$,
and hence, for all $\run = \running$, we have $\state_{\coord,\run} - \sol[\state_{\coord}] = \sum_{\coordalt \in \sharps} \coef_{\coord\coordalt} \state_{\coordalt,\run}$.
Our claim then follows from \cref{thm:sharp} and the fact that all norms are equivalent on $\R^{\nCoords}$.
\end{proof}

To facilitate comparisons with the non-sharp regime, the guarantees of \cref{thm:sharp} are juxtaposed with those of \cref{thm:general} in \Cref{tab:rates-sharp}.
Beyond this comparison, \cref{thm:sharp} is the main result of this section so, before proving it, some further remarks are in order.

\setcounter{remark}{0}

\begin{table}[tbp]
\centering
\footnotesize
\renewcommand{\arraystretch}{1.25}

\begin{tabular}{lcccc}
\toprule
	&\textbf{Bregman Kernel ($\hker$)}
    &\revise{\bfseries Bregman rate (Thm.~\ref{thm:general})}
    &\revise{\bfseries Sharp rate (Thm.~\ref{thm:sharp})}
	\\
\midrule
\scshape{Euclidean}	
	&$\point^{2}/2$
    &Linear
	&Finite time
	\\
\scshape{Entropic}
	&$\point\log\point$
	&$\bigoh(1/\run)$
	&Linear 
	\\
\scshape{Tsallis}
	&$[\qexp(1-\qexp)]^{-1} (\point - \point^{\qexp})$
	&$\bigoh(1/\run^{\qexp/(2-\qexp)})$
	&$\bigoh(1/\run^{1/(1-\qexp)})$
	\\
\scshape{Hellinger}
	&$-\sqrt{1-\point^{2}}$
    &$\bigoh(1/\run^{1/3})$
	&$\bigoh(1/\run^{2})$
	\\
\bottomrule
\end{tabular}

\smallskip
\caption{Summary of the accelerated rates of convergence observed along sharp directions 
depending on the Bregman kernel (\cf \cref{def:decomposable}).
The Euclidean, entropic and Tsallis kernels are the prototypical examples of \cref{asm:ker-Eucl,asm:ker-log,asm:ker-power} of \cref{asm:ker}.
\revise{The Bregman rate column refers to the Bregman divergence $\breg(\sol,\curr)$;
the sharp rate refers to $\state_{\coord,\run}$ for a sharp direction $\coord\in\sharps$ at $\sol$.}}
\label{tab:rates-sharp}
\end{table}


\begin{remark}
[Solution configurations]
By construction (and the fact that $\points$ admits a Slater point), it is straightforward to verify that $\vecfield$ is sharp at $\sol$ if and only if $\vecfield(\sol) \in -\relint(\ncone(\sol))$;
likewise, $\sol$ is itself sharp if and only if $\vecfield(\sol) \in -\intr(\ncone(\sol))$.
As we noted in the proof of \cref{cor:sharp}, the latter condition is equivalent to asking that $\lspan\setdef{\bvec_{\coord}}{\coord\in\sharps} + \row(\mat) = \R^{\nCoords}$, a condition which describes precisely the informal requirement that the sharp directions at $\sol$ suffice to characterize it.
By contrast, if $\vecfield$ is sharp at some non-extreme point $\sol$, there exists some (nonzero) $\tvec\in\tcone(\sol)$ such that $\braket{\vecfield(\sol)}{\tvec} = 0$, indicating that the accelerated rates of \cref{thm:sharp} cannot be active along the residual direction $\pvec$.
We illustrate these distinct solution configurations in \cref{fig:simplex}.
\hfill
\endenv
\end{remark}
\begin{remark}[Tightness and the structure of $\points$]
We underline 
that the dependence of $\drifteff$ on the structure of $\points$ in the second branch of \eqref{eq:drift-eff} cannot be lifted.
To see this, let
\begin{equation}
\points
	= \setdef{\point\in\R_{+}^{2}}{\point_{1} = \eps\point_{2}}
\end{equation}
\ie $\mat = \bracks{1 \; -\eps}$ and $\ker\mat$ is spanned by the vector $\pvec = (\eps,1)$.
Then, if we take $\vecfield(\point) = \point - \base$ with $\base_{1} \leq 0$ and $\base_{2} \geq 0$ (so that the origin is a solution), and we equip $\points$ with the Bregman regularizer induced by the entropic kernel $\hker(\point) = \point\log\point$, a straightforward calculation shows that the iterates of \eqref{eq:MD} satisfy the recursion
\begin{equation}
\eps \hker'(\state_{1,\run+1}) + \hker'(\state_{2,\run+1})
	= \bracks[\big]{\eps \hker'(\state_{1,\run}) + \hker'(\state_{2,\run})}
		+ \step \parens[\big]{\eps\state_{1,\run} + \state_{2,\run}}
		- \step \braket{\base}{\pvec}.
\end{equation}
Thus, letting $\curr[\chi] = \state_{2,\run} = \state_{1,\run} / \eps$, the above can be rewritten as
\begin{equation}
\eps \hker'(\eps\next[\chi]) + \hker'(\next[\chi])
	= \bracks[\big]{\eps \hker'(\eps\curr[\chi]) + \hker'(\curr[\chi])}
		+ \step (\eps^{2} + 1) \curr[\chi]
		- \step \braket{\base}{\pvec}
\end{equation}
and hence, with $\hker'(\point) = 1 + \log\point$, we finally get
\begin{equation}
\next[\chi]
	= \curr[\chi] \exp\parens*{
			- \step \frac{\eps^{2} + 1}{\eps + 1} \curr[\chi]
			+ \step \frac{\braket{\base}{\pvec}}{\eps + 1}
		}.
\end{equation}
Now, if $\braket{\base}{\pvec} < 0$,
we readily infer that $\curr[\chi] = \state_{2,\run}$ converges linearly to $0$
at a rate of
\(
\curr[\chi]
	\sim \exp\parens*{- \abs{\braket{\base}{\pvec}}/(1+\eps) \cdot \step \run}
\)
as predicted by \cref{thm:sharp}.
In particular, if $\vecfield(0) = - \base = (\drift, \drift)$
with $\drift > 0$, the iterates of \eqref{eq:MD} converge geometrically to zero with exponent $\step \drift$, which matches the estimate of \cref{thm:sharp} up to a factor of $1/2$ in the exponent.
On the contrary, if $\vecfield(0) = -\base = (\drift, 0)$, the term $\abs{\braket{-\base}{\pvec}} = \eps\drift$ depends on the linear structure of $\points$ and can be arbitrarily bad as $\eps$ goes to zero.
This illustrates why one cannot do away with the dependence on the linear structure of $\points$ when $\vecfield$ is not sharp at $\sol$.
\hfill
\endenv
\end{remark}

\subsection{Proof of \cref{thm:sharp}}
\label{sec:proof-sharp}

We now proceed to the proof of \cref{thm:sharp}, beginning with two helper lemmas tailored to the polyhedral structure of $\points$.
The first is a book-keeping result regarding the subdifferentiability of $\hreg$.

\begin{lemma}
\label{lem:dh}
Let $\hreg$ be a decomposable regularizer on $\points$ with kernel $\hker$ as per \cref{def:decomposable}.
Then the domain of subdifferentiability of $\hreg$ is $\proxdom = \setdef{\point \in \points}{\point_{\coord} \in \dom \subd \hker \text{ for all } \coord = 1,\dots,\nCoords}$ and a continuous selection of $\subd\hreg$ is given by the expression
\begin{equation}
\label{eq:dh}
\nabla \hreg(\point)
	= \sum_{\coord = 1}^{\nCoords} \hker'(\point_{\coord})\bvec_{\coord}
	\quad
	\text{for all $\point\in\proxdom$}.
\end{equation}
\end{lemma}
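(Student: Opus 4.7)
The plan is to tackle the two claims in tandem: characterize $\proxdom$ via a sum-rule decomposition of $\subd\hreg$, and then verify that the proposed formula lands in $\subd\hreg$ and is continuous.

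\emph{Step 1: Sum rule.} Write $\hreg = \hreg_{0} + \iota_{L}$, where $\hreg_{0}(\point) = \sum_{\coord=1}^{\nCoords}\hker(\point_{\coord})$ is a separable convex function on $\R_{+}^{\nCoords}$ (with value $+\infty$ outside) and $\iota_{L}$ is the indicator of the affine subspace $L = \{\point\in\R^{\nCoords}:\mat\point=\cvec\}$. Since $\points$ is assumed to admit a Slater point $\point^{\star}$ with $\point^{\star}_{\coord}>0$ for every $\coord$, we have $\point^{\star}\in\relint(\dom\hreg_{0})\cap L$, and the Moreau--Rockafellar sum rule gives $\subd\hreg(\point) = \subd\hreg_{0}(\point) + \row(\mat)$ for every $\point\in\points$. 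The separable structure of $\hreg_{0}$ yields $\subd\hreg_{0}(\point) = \{\dvec\in\R^{\nCoords} : \dvec_{\coord}\in\subd\hker(\point_{\coord}) \text{ for every } \coord\}$, so $\dom\subd\hreg_{0}$ consists of those $\point\in\R_{+}^{\nCoords}$ for which $\point_{\coord}\in\dom\subd\hker$ for every $\coord$. Intersecting with $L$ and noting that translation by $\row(\mat)$ does not alter the domain yields the claimed description of $\proxdom$.

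\emph{Step 2: Continuous selection.} By \cref{def:decomposable}, $\hker''$ is positive and locally Lipschitz on $(0,\infty)$, so $\hker'$ is a well-defined continuous function on $(0,\infty)$. At a boundary point $z=0$ where $\subd\hker(0)\neq\varnothing$, the monotone limit $\alpha := \lim_{z\to 0^{+}}\hker'(z)$ is finite and coincides with the upper endpoint of $\subd\hker(0)$; setting $\hker'(0) := \alpha$ extends $\hker'$ continuously to all of $\dom\subd\hker$. The map $\point\mapsto\sum_{\coord}\hker'(\point_{\coord})\bvec_{\coord}$ is then continuous on $\proxdom$, and for any $\pointalt\in\points$ the convexity of each $\hker$ gives
\begin{equation*}
\hreg(\pointalt)-\hreg(\point) = \sum_{\coord}\bigl[\hker(\pointalt_{\coord})-\hker(\point_{\coord})\bigr] \geq \sum_{\coord}\hker'(\point_{\coord})(\pointalt_{\coord}-\point_{\coord}),
\end{equation*}
while the inequality is trivial for $\pointalt\notin\points$. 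Hence this map takes values in $\subd\hreg$, as required.

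The only technical subtlety is the application of the Moreau--Rockafellar sum rule to $\hreg_{0}+\iota_{L}$, but this is standard given the Slater hypothesis on $\points$; the remainder is routine bookkeeping.
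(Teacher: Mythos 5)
Your proof is correct and follows essentially the same route as the paper, which simply cites Rockafellar's Theorem~23.8 (the Moreau--Rockafellar sum rule) for the decomposition $\subd\hreg = \subd\hreg_0 + \row(\mat)$. You spell out the two ingredients the paper leaves implicit — the Slater point makes $\relint(\dom\hreg_0)\cap L$ nonempty so the sum rule applies, and $\hker'$ extends continuously to $\dom\subd\hker$ because the right derivative at $0$ is the monotone limit of $\hker'(z)$ as $z\to0^+$.
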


\begin{proof}
See \cite[Thm~23.8]{Roc70}, whose conditions are satisfied because $\points$ is polyhedral.
\end{proof}

The second ingredient we will need is a separation result in the spirit of Farkas' lemma.

\begin{restatable}{lemma}{separation}
\label{lem:separation}
Let $\points$ be a polyhedral domain of the 
form \eqref{eq:polyhedron}.
Then, for all $\sol\in\points$, there exists $\polycst = \polycst(\mat, \cvec, \sol) \geq 1$ such that, for all $\coords \subseteq \actcoords \equiv \actcoords(\sol)$, at least one of the following holds:
\begin{enumerate}
[(\itshape a\upshape)]
\item
$\coords \neq \varnothing$ and there exists $\coord \in \actcoords\setminus\coords$ such that
\(
\point_{\coord}
	\leq \polycst \max\setdef{\point_{\coordalt}}{\coordalt \in \coords}
\)
for all $\point\in\points$.
\label[case]{itm:inactive}
\item
There exists $\pvec \in \ker\mat$ such that $\norm{\pvec} \leq \polycst$, $\pvec_{\coord} = 0$ if $\coord \in \coords$ and $\polycst \geq \pvec_{\coord} \geq 1$ if $\coord \in \actcoords \setminus \coords$.
\label[case]{itm:active}
\end{enumerate}
\end{restatable}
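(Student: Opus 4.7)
The plan is to establish the lemma for each subset $\coords \subseteq \actcoords$ separately and then take the maximum of the resulting constants to produce the uniform $\polycst \geq 1$; this is legitimate since $\actcoords$ is finite and has finitely many subsets. For a fixed $\coords$, the key object is the linear subspace
\[
V \defeq \setdef{\pvec \in \ker\mat}{\pvec_{\coord} = 0 \text{ for all } \coord \in \coords},
\]
whose orthogonal complement in $\R^{\nCoords}$ admits the explicit description $V^{\perp} = \row(\mat) + \lspan\setdef{\bvec_{\coord'}}{\coord' \in \coords}$.

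The first step is to dichotomize via Gordan's theorem of alternatives applied to the coordinate projection $V \to \R^{\actcoords \setminus \coords}$: either \emph{(i)} there is some $\pvec \in V$ with $\pvec_{\coord} > 0$ for every $\coord \in \actcoords \setminus \coords$, or \emph{(ii)} there is a nonzero $\lambda \in \R_{+}^{\actcoords \setminus \coords}$ such that $\sum_{\coord \in \actcoords \setminus \coords} \lambda_{\coord} \bvec_{\coord} \in V^{\perp}$. In case \emph{(i)}, rescaling so that $\pvec_{\coord} \geq 1$ on $\actcoords \setminus \coords$ and choosing the minimum-norm representative of the resulting (nonempty) polyhedron yields a vector whose norm depends only on $(\mat, \coords)$, which gives conclusion (b); the degenerate subcase $\coords = \actcoords$ is handled trivially via $\pvec = 0$.

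In case \emph{(ii)}, the membership $\sum_{\coord \in \actcoords \setminus \coords} \lambda_{\coord} \bvec_{\coord} = \mat^{\top} y + \sum_{\coord' \in \coords} \mu_{\coord'} \bvec_{\coord'}$ can be dualized against an arbitrary $\point \in \points$: using $\mat\point = \cvec$ and first specializing to $\point = \sol$ (which annihilates every $\sol_{\coord}$ with $\coord \in \actcoords$) forces $\braket{y}{\cvec} = 0$, yielding the core identity
\[
\sum_{\coord \in \actcoords \setminus \coords} \lambda_{\coord} \point_{\coord}
	= \sum_{\coord' \in \coords} \mu_{\coord'} \point_{\coord'}
	\quad
	\text{for all } \point \in \points.
\]
Picking any $\coord^{*} \in \actcoords \setminus \coords$ with $\lambda_{\coord^{*}} > 0$ and using the non-negativity of $\point$ on $\points$ then gives $\point_{\coord^{*}} \leq (\lambda_{\coord^{*}})^{-1} \parens*{\sum_{\coord' \in \coords} \abs{\mu_{\coord'}}} \max_{\coord' \in \coords} \point_{\coord'}$, which is precisely conclusion (a). Crucially, when $\coords = \varnothing$ the right-hand side of the identity vanishes, forcing $\point_{\coord^{*}} = 0$ on all of $\points$; this contradicts the existence of a Slater point, so the $\coords = \varnothing$ case is automatically pushed into alternative \emph{(i)}, as required.

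The main technical obstacle is the correct invocation of the theorem of alternatives together with the orthogonal-decomposition of $V^{\perp}$ that couples the dual certificate back to the polyhedral structure of $\points$. Once the certificate is in hand, everything else reduces to the identity above, the non-negativity of the coordinates of $\point$, and a finite maximum over the subsets $\coords \subseteq \actcoords$ to produce the uniform constant $\polycst$.
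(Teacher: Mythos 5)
Your proof is correct and follows essentially the same strategy as the paper's: both proofs dichotomize on whether a non-trivial non-negative combination $\sum_{\coord \in \actcoords \setminus \coords}\lambda_\coord \bvec_\coord$ lands in $\row(\mat) + \lspan\setdef{\bvec_{\coord'}}{\coord' \in \coords}$, derive conclusion (a) from the resulting linear identity on $\points$ together with non-negativity of coordinates, and obtain conclusion (b) from a theorem of the alternative in the complementary case, finishing with a finite max over subsets $\coords$. The only differences are cosmetic: you invoke Gordan's theorem on the coordinate projection restricted to the subspace $V = \ker\mat \cap \lspan\setdef{\bvec_{\coord'}}{\coord' \in \coords}^{\perp}$, whereas the paper phrases the alternative in terms of Motzkin's transposition theorem; and your handling of $\coords = \varnothing$ (the vanishing right-hand side forces a coordinate to be identically zero on $\points$, contradicting the Slater point) is in fact slightly cleaner than the paper's remark that $\points$ would collapse to $\{0\}$, which is a bit imprecise. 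Both routes are valid and yield the same constant $\polycst$.
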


The proof of \cref{lem:separation} is based on Farkas' lemma so we relegate it \refapp[to]{app:aux}.
Armed with all this, we can finally proceed to prove our result for linearly constrained problems.

\begin{proof}[Proof of \cref{thm:sharp}]
We will consider two main cases, namely $\lim_{\point\to0^{+}}\hreg'(\point) = -\infty$ (the steep case) and $\lim_{\point\to0^{+}}\hreg'(\point) > -\infty$ (the non-steep case).
The steep regime will cover \cref{asm:ker-log,asm:ker-power} of \cref{asm:ker}, whereas the non-steep regime will account for \cref{asm:ker-Eucl}.

\para{Case 1: the steep regime}
Note first that, without loss of generality, \cref{asm:ker-log,asm:ker-power} respectively imply that there exist $\kernelcst\in\R$ and $\thres>0$ such that, for all $\point\in(0,\thres)$, we have:
\begin{subequations}
\label{eq:ker-growth}
\begin{flalign}
\qquad
\text{Under \cref{asm:ker}\ref{asm:ker-log}:}
	&\;\;\;
	\text{$\hker'(\point) \geq \log\point - \kernelcst$}
	&&
	\\
\text{Under \cref{asm:ker}\ref{asm:ker-power}:}
	&\;\;\;
	\text{$\hker'(\point) \geq -\kernelcst\point^{-\kernelexp}$}
	&&
\end{flalign}
\end{subequations}
With this in mind, let $\radius > 0$ be sufficiently small so that $\ball_{\radius} \defeq \setdef{\point\in\points}{\norm{\point - \sol} \leq \radius}$ satisfies:
\begin{itemize}
\item
$\ball_{\radius} \subseteq \nhd \cap \basin$ with $\basin$ and $\nhd$ defined by \eqref{eq:strong} and \eqref{eq:Breg-upper} respectively.
\item
If $\point \in \ball_{\radius}$ then $\point_{\coord} < \thres$ for all $\coord \in \actcoords$.
\item
If $\point \in \ball_{\radius}$, then $\dnorm{\vecfield(\point) - \vecfield(\sol)} \leq \drift/(2\polycst)$ with $\drift$ 
by \eqref{eq:drift} and $\polycst$ 
by \cref{lem:separation}.
\end{itemize}

Now, recall that Step 1 of the proof of \cref{thm:general} implies that the iterates of \eqref{eq:BPM} will remain in $\ball_{\radius}$ for all $\run$ if $\init$ is initialized sufficiently close to~$\sol$.
Given this stability guarantee, we will construct below two sets $\coords_{\sharp} \subseteq \sharps$, $\coords_{\flat} \subseteq \flats$ such that, for all $\coord \in \coords \defeq \coords_{\sharp} \cup \coords_{\flat}$, we have
\begin{align}
\label{eq:propCoord}
\state_{\coord,\run}
	\leq \polycst^{\abs{\coords}}\!\cdot\!\begin{cases}
		\exp\parens*{\kernelcst + \polycst \dnorm{\grad \hreg(\init)}\!+\!\polycst \bregbdedcst\!-\!\step \drifteff(\run - 1)/2}
			&
			\text{under \cref{asm:ker}\ref{asm:ker-log},}
			\\
		\kernelcst \pospart*{\step\drifteff(\run - 1)/2 - \polycst \dnorm{\grad \hreg(\init)} - \polycst \bregbdedcst}^{-\frac{1}{\kernelexp}}
			&
			\text{under \cref{asm:ker}\ref{asm:ker-power}},
		\end{cases}
\end{align}
where
$\bregbdedcst \defeq \sup_{\point \in \ball_{\radius}} \dnorm*{ \sum_{\coord \notin \actcoords} \hker'(\point_{\coord}) \bvec_\coord}$, $\polycst \geq 1$ is the constant given by \cref{lem:separation} and
$\drifteff$ is defined as in \eqref{eq:drift-eff} with $\varrho = \polycst\abs{\actcoords}$.

Our construction proceeds inductively, starting with $\coords_{\sharp} = \coords_{\flat} = \varnothing$, for which the stated property holds trivially.
For the inductive step, if \eqref{eq:propCoord} holds for $\coords_{\sharp} \subsetneq \sharps$ and $\coords_{\flat} \subseteq \flats$, we will show that there exists some $\coordalt \in \actcoords \setminus \coords$ such that \eqref{eq:propCoord} still holds for $\coords \cup \{\coordalt\}$.
Since the number of active constraints is finite, the progressive addition of these indices will allow us to reach $\coords_{\sharp} = \sharps$, thus proving our initial claim.

To carry all this out, assume that $\coords_{\sharp} \subsetneq \sharps$, $\coords_{\flat} \subseteq \flats$, and apply \cref{lem:separation} to $\coords = \coords_{\sharp} \cup \coords_{\flat}$.
If the first case of \cref{lem:separation} holds, then $\coords \neq \varnothing$ and there exists $\coord \in \actcoords$ such that
\begin{equation}
\state_{\coord,\run}
	\leq \polycst \max\nolimits_{\coordalt\in\coords} \state_{\coordalt,\run}
\end{equation}
so \eqref{eq:propCoord} still holds when $\coord$ is appended to $\coords_{\sharp}$ or $\coords_{\flat}$. 
Otherwise, the second case of \cref{lem:separation} holds and there exists $\pvec \in \ker\mat$ with $\norm{\pvec} \leq \polycst$,
$\pvec_{\coord} = 0$ if $\coord \in \coords$,
and
$\polycst \geq \pvec_{\coord} \geq 1$ if $\coord \in \actcoords \setminus \coords$.
Since $\hreg$ is steep, this means that $\next = \proxof{\curr}{-\step \lead[\signal]}$ belongs to $\proxdom = \relint \points$ for all $\run=\running$, so 
$\ncone(\next)$ 
is the affine hull of $\points$, \ie $\ncone(\next) = \row(\mat)$.
Hence, \refinapp{lem:mirror}{app:aux} guarantees
\begin{equation}
\label{eq:proof-sharp-mirror-step}
\subsel\hreg(\next)
	- \subsel\hreg(\curr)
	+ \step \lead[\signal]
	\in \row(\mat)
\end{equation}
so, telescoping from $\runalt = \start$ to $\run-1$, we get
\(
\subsel\hreg(\curr)
	- \subsel\hreg(\init)
	+ \step \sum_{\runalt = \start}^{\run-1} \iterlead[\signal]
	\in \row(\mat).
\)
Thus, taking the scalar product with $\pvec \in \ker\mat$ yields
\begin{equation}
\sum_{\coord=1}^{\nCoords} \hker'(\state_{\coord,\run}) \pvec_{\coord}
	= \braket{\subsel\hreg(\init)}{\pvec}
		- \step \sum_{\runalt=\start}^{\run - 1} \braket{\iterlead[\signal]}{\pvec}
\end{equation}
so, after rearranging and invoking \eqref{eq:slacks} to write $\braket{\vecfield(\sol)}{\pvec} = \sum_{\coord \in \actcoords} \slack_{\coord} \pvec_{\coord}$, we get
\begin{align}
\sum_{\coord \in \actcoords \setminus \coords} \hker'(\state_{\coord,\run}) \pvec_{\coord}
	+ \sum_{\coord \in \coords} \hker'(\state_{\coord,\run}) \pvec_{\coord}
	&= \braket{\subsel\hreg(\init)}{\pvec}
		- \step \sum_{\runalt=\start}^{\run - 1} \sum_{\coord \in \actcoords} \slack_{\coord} \pvec_{\coord}
	\notag\\
	&+\step \sum_{\runalt = \start}^{\run - 1} \braket{\vecfield(\sol) - \iterlead[\signal]}{\pvec}
		- \sum_{\coord \notin \actcoords} \hker'(\state_{\coord,\run}) \pvec_{\coord}.
\end{align}
Finally, by the properties we used to construct $\pvec$, we further have
\begin{align}
\label{eq:proof-sharp-template-ineq}
\sum_{\mathclap{\coord\in\actcoords \setminus \coords}} \hker'(\state_{\coord,\run}) \pvec_{\coord}
	& \leq \polycst \dnorm{\subsel\hreg(\init)}
		- \step\parens{\run-1} \sum_{\mathclap{\coord \in \actcoords \setminus \coords}} \slack_{\coord} \pvec_{\coord}
	\notag\\
	&\qquad
		+ \step\sum_{\runalt=\start}^{\run - 1} \polycst \dnorm{\vecfield(\sol) - \iterlead[\signal]}
		+ \polycst \dnorm*{\sum_{\coord \notin \actcoords} \hker'(\state_{\coord,\run}) \bvec_\coord}
	\notag\\
	& \leq \polycst \dnorm{\subsel\hreg(\init)}
		- \step (\run - 1) \sum_{\mathclap{\coord \in \actcoords \setminus \coords}} \slack_{\coord} \pvec_{\coord}
		+ \tfrac{1}{2} \step \drift (\run - 1)
		+ \polycst \bregbdedcst.
\end{align}
where the second inequality follows from how we chose $\ball_{\radius}$ at the beginning of the proof.

We conclude by distinguishing whether $\vecfield$ is sharp at $\sol$ (\ie if $\flats = \varnothing$ or not).
\begin{enumerate}
[left=0em,label={\bfseries Case \arabic*:}]
\item
If $\flats=\varnothing$, we have $\actcoords \setminus \coords = \sharps \setminus \coords_{\sharp}$ and $\slack_{\coord} \geq \drift$ for all $\coord \in \actcoords \setminus \coords$, so \eqref{eq:proof-sharp-template-ineq} gives
\begin{equation}
\sum_{\coord \in \actcoords \setminus \coords} \hker'(\state_{\coord,\run}) \pvec_{\coord} + \step (\run - 1) \drift \pvec_{\coord}
	\leq \polycst \dnorm{\subsel\hreg(\init)}
		+ \tfrac{1}{2} \step\drift(\run-1)
		+ \polycst \bregbdedcst.
\end{equation}
Choosing the coordinate $\coordalt \in \actcoords \setminus \coords$ corresponding to the smallest term in the sum on the \ac{LHS}, we get
\begin{align}
	\parens*{\hker'(\state_{\coordalt,\run}) + \step (\run - 1) \drift} (\abs{\actcoords} \setminus \coords) \pvec_{\coordalt}
	\leq
	\polycst \dnorm{\subsel\hreg(\init)}
	+ \tfrac{1}{2} \step\drift(\run - 1)
	+ \polycst \bregbdedcst
\end{align}
and noting that $\abs{\actcoords\setminus \coords} \pvec_{\coordalt} \geq 1$ yields
\begin{align}
 \label{eq:proof-sharp-final-ineq1}
	\hker'(\state_{\coordalt,\run})
	\leq
	\polycst \dnorm{\subsel\hreg(\init)}
	-\tfrac{1}{2} \step\drift(\run - 1)
	+ \polycst \bregbdedcst\,.
\end{align}

\item
If $\flats \neq \varnothing$, then, since $\coords_{\sharp} \subsetneq \sharps$, the intersection of $\actcoords \setminus \coords$ and $\sharps$ is not empty so that, $\sum_{\coord \in \actcoords \setminus \coords} \slack_{\coord} \pvec_{\coord} \geq \sum_{\coord \in \actcoords \setminus \coords} \slack_{\coord} \geq \drift$ and the inequality \eqref{eq:proof-sharp-template-ineq} above becomes,
\begin{align}
	\sum_{\coord \in \actcoords \setminus \coords} \hker'(\state_{\coord,\run}) \pvec_{\coord}
	\leq
	\polycst \dnorm{\subsel\hreg(\init)}
	- \tfrac{1}{2} \step\drift(\run - 1)
	+ \polycst \bregbdedcst\,.
\end{align}

Now, choosing $\coordalt$ to be the coordinate in $\actcoords \setminus \coords$ which minimizes the \ac{LHS} and bounding $\abs{\actcoords \setminus \coords}$ by $1$ and $\abs{\actcoords}$, we get that
\begin{align}
\hker'(\state_{\coordalt,\run}) \pvec_{\coordalt}
	\leq
	\polycst \dnorm{\subsel\hreg(\init)}
	-\frac{\step \drift}{2\abs{\actcoords}} (\run - 1)
	+ \polycst \bregbdedcst\,.
\end{align}
Dividing both sides by $\pvec_{\coordalt}$ and using that it lies between $1$ and $\polycst$ gives
\begin{align}
 \label{eq:proof-sharp-final-ineq2}
	\hker'(\state_{\coordalt,\run})
	\leq
	\polycst \dnorm{\subsel\hreg(\init)}
	-\frac{\step \drift}{2 \polycst \abs{\actcoords}} (\run - 1)
	+ \polycst \bregbdedcst\,
\end{align}
\end{enumerate}
Thus, in both cases, there exists $\coordalt \in \actcoords \setminus \coords$ such that \eqref{eq:proof-sharp-final-ineq2} holds, since $\polycst \abs{\actcoords} \geq 1$.
Therefore, combining this inequality with \eqref{eq:ker-growth} we conclude that \eqref{eq:propCoord} holds for $\coordalt$,
and 
since $\polycst \geq 1$, we conclude that we can augment $\coords_{\sharp}$ or $\coords_{\flat}$ by $\coordalt$, depending on whether it belongs to $\sharps$ or $\flats$.

\para{Case 2: the non-steep regime}
The proof borrows the structure of the first case, though it is more direct. 
	Take $\radius > 0$ small enough such that $\ball_{\radius} \defeq \{ \point\in\points : \norm{\point - \sol} \leq \radius \}$ satisfies
\begin{itemize}
		\item $\ball_{\radius}$ is included in $\nhd \cap \basin$, and if $\point \in \ball_{\radius}$, then $\dnorm{\vecfield(\point) - \vecfield(\sol)} \leq \frac{\drift}{3 \polycst}$,
		\item if $\point, \pointalt \in \ball_{\radius}$ then, $\dnorm{\subsel\hreg (\point) - \subsel\hreg(\pointalt)} \leq \frac{\step \drift}{3 \polycst}$ where $\polycst$ is given by \cref{lem:separation}. This is possible since $\subsel\hreg$ is continuous at $\sol$,
 \item No other constraint $\point_{\coord} = 0$ with $\coord \notin \actcoords$ becomes active in $\ball_{\radius}$.
\end{itemize}
	As we have seen in 
 the proof of \cref{thm:general}, if \eqref{eq:BPM} is initialized close enough to~$\sol$, then all the iterates $\curr$ and the half-iterates $\lead$ for $\runalt=\running$ are contained in $\ball_{\radius}$.

 As above, fix some $\run \geq \nRuns$.
 We will build sets $\coords_{\sharp} \subseteq \sharps$, $\coords_{\flat} \subseteq \flats$ with the property that that
 \begin{align}
 \label{eq:propertynonsteep}
 \text{ for all } \coord \in \coords_{\sharp} \cup \coords_{\flat}, \state_{\coord,\run} = 0 \,.
 \end{align}

	Starting with $\coords_{\sharp} = \coords_{\flat} = \varnothing$, \cref{eq:propertynonsteep} is trivially verified.
	Now, take $\coords_{\sharp} \subsetneq \sharps$, $\coords_{\flat} \subseteq \flats$ which satisfy the desired property and, as before, apply \cref{lem:separation} with $\coords \defeq \coords_{\sharp} \cup \coords_{\flat}$.
If the first case of \cref{lem:separation} holds, then $\coords \neq \varnothing$ and there exists $\coordalt \in \actcoords$ such that,
\begin{align}
\state_{\coordalt,\run} & \leq \polycst \max\parens*{\state_{\coord,\run} : \coord \in \coords}
\end{align}
which yields the result by adding $\coord$ to $\coords_{\sharp}$ or $\coords_{\flat}$ depending whether it belongs to $\sharps$ or $\flats$.
Otherwise, if the second case holds, there is some $\pvec \in \ker\mat$ such that $\norm{\pvec} \leq \polycst$, $\pvec_{\coord} = 0$ if $\coord \in \coords$ and $\polycst \geq \pvec_{\coord} \geq 1$ if $\coord \in \actcoords \setminus \coords$.
For the sake of contradiction, assume that for all $\coord \in \actcoords \setminus \coords$, $\state_{\coord,\run} > 0$. Showing that this results in a contradiction will give us an additional coordinate $\coordalt \in \actcoords \setminus \coords$ for which $\state_{\coordalt,\run} = 0$ that we will then add to $\coords_{\sharp}$ or $\coords_{\flat}$ as in the first case.

Now, let us determine the normal cone at $\curr$. 
 Since $\curr$ belongs to $\ball_\radius^{\points}(\sol)$, no other constraint other than the ones 
 of $\coords$ can become active, and these constraints are actually active by the definition of $\coords$ and \cref{eq:propertynonsteep}. Hence, the normal cone at $\curr$ (see \cref{eq:ncone-sharp}) becomes
\begin{align}
	\ncone(\curr) &= \left\{-\sum_{\coord \in \coords} \slack_{\coord} \bvec_{\coord} : (\slack_{\coord})_{\coord \in \coords} \in (\R_+)^{\coords}\right\} + \row(\mat)\,.
\end{align}
Taking a scalar product between the last inclusion of \refinapp{lem:mirror}{app:aux} and $\pvec$, we get that
\begin{align}
	\braket*{ \subsel\hreg(\curr) - \subsel\hreg(\prev) - \step \beforelead[\signal]}{\pvec} = 0\,.
\end{align}
This means, from the definition of $\ball_\radius^{\points}(\sol)$, that, 
\begin{align}
	\step\braket{\vecfield(\sol)}{\pvec} &=
	\braket*{ \subsel\hreg(\curr) - \subsel\hreg(\prev)}{\pvec} + \step\braket{\vecfield(\sol) - \step \beforelead[\signal]}{\pvec}
	\leq
	\frac{2\step \drift}{3}\label{eq:proof-sharp-eucl-final-ineq}\,
\end{align}
However, by \eqref{eq:slacks} and the properties of $\pvec$, we also have 
\begin{align}
	\braket{\vecfield(\sol)}{\pvec} &=
	\sum_{\coord \in \actcoords \setminus \coords} \slack_{\coord} \pvec_{\coord} \geq \drift\,
\end{align}
which is in contradiction with \eqref{eq:proof-sharp-eucl-final-ineq}.
We may therefore iteratively add coordinates of $\actcoords$ for which $ \state_{\coord,\run} = 0$, which completes the induction and our proof.
\end{proof}

\section{Concluding remarks}
\label{sec:discussion}

Our results indicate that Euclidean regularization leads to faster trajectory convergence rates near \ac{SOS} solutions.
While this does not contradict the analysis of \cite{Nem04} \textendash\ which concerns the method's ergodic average and advocates the use of non-Euclidean regularizers in domains with a favorable geometry \textendash\ it \emph{does} run contrary to its spirit.
We attribute the source of this discrepancy
to the fact that Lipschitz continuity and second-order sufficiency are both norm-based conditions, so it is plausible to expect that norm-based regularizers would lead to better results.
This raises the question of what the corresponding rate analysis would give in the case of Bregman-based variants of \eqref{eq:Lipschitz} and \eqref{eq:strong}, \eg as in the recent works of \cite{BDX11,BBT17,LFN18,ABM19,ABM20,AM21,ABM21}.
We defer this analysis to future work.

\section*{Acknowledgments}
%
%
This research was supported in part by 
the French National Research Agency (ANR) in the framework of
the PEPR IA FOUNDRY project (ANR-23-PEIA-0003),
the ``Investissements d'avenir'' program (ANR-15-IDEX-02),
the LabEx PERSYVAL (ANR-11-LABX-0025-01),
MIAI@Grenoble Alpes (ANR-19-P3IA-0003).
PM is also a member of the Archimedes Research Unit/Athena RC\textendash NKUA, and was partially supported by project MIS 5154714 of the National Recovery and Resilience Plan Greece 2.0 funded by the European Union under the NextGenerationEU Program.

\appendix
\numberwithin{equation}{section}	
\numberwithin{lemma}{section}	
\numberwithin{proposition}{section}	
\numberwithin{theorem}{section}	
\numberwithin{corollary}{section}	

\section{Auxiliary results}
\label{app:aux}
%
%
We provide here a series of basic properties, helper lemmas and auxiliary results that we use repeatedly in our paper.

\subsection{Lemmas on numerical sequences}
The first two results 
concern numerical sequences.

\begin{lemma}
\label{lem:Polyak}
Consider two sequences of 
real numbers $\curr[\seq], \curr[\diff] \geq 0$, $\run=\running$, such that
\begin{equation}
\next[\seq]
	\leq \curr[\seq]
		- \curr[\diff] \curr[\seq]^{1+\rexp}
	\quad
	\text{for some $\rexp>0$ and all $\run=\running$}
\end{equation}
Then, for all $\run=\running$, we have:%
\begin{equation}
\next[\seq]
	\leq \frac{\init[\seq]}{\parens*{1 + \rexp \init[\seq]^{\rexp} \sum_{\runalt=\start}^{\run} \iter[\diff]}^{1/\rexp}}.
\end{equation}
\end{lemma}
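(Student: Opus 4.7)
The plan is to linearize the recursion through the change of variable $\curr[\seq] \mapsto \curr[\seq]^{-\rexp}$, and then telescope. First, I would dispose of the degenerate situation: if $\afteriter[\seq] = 0$ for some $\runalt \leq \run$, then the recursion forces $\iter[\seq] = 0$ for all subsequent indices, and the claimed bound holds trivially (with the convention $0/(\cdot) = 0$). I may therefore assume $\iter[\seq] > 0$ for all $\runalt = \start, \dots, \run + 1$. Under this assumption, the hypothesis rewrites as $\afteriter[\seq] \leq \iter[\seq]\parens{1 - \iter[\diff]\iter[\seq]^{\rexp}}$, and since the left-hand side is strictly positive, the factor $1 - \iter[\diff]\iter[\seq]^{\rexp}$ lies automatically in $(0,1]$.

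The crux is then a standard application of Bernoulli's inequality: for every $\rexp > 0$ and $x \in [0,1)$, one has $(1-x)^{-\rexp} \geq 1 + \rexp\, x$ (this follows from the convexity of $x \mapsto (1-x)^{-\rexp}$ on $[0,1)$ together with the fact that its tangent line at the origin is precisely $1 + \rexp\, x$). Inverting the previous display, raising both sides to the $\rexp$-th power, and applying this inequality with $x = \iter[\diff]\iter[\seq]^{\rexp}$ yields the one-step linear bound
\[
\afteriter[\seq]^{-\rexp}
    \geq \iter[\seq]^{-\rexp} \parens[\big]{1 - \iter[\diff]\iter[\seq]^{\rexp}}^{-\rexp}
    \geq \iter[\seq]^{-\rexp}\parens[\big]{1 + \rexp\, \iter[\diff]\iter[\seq]^{\rexp}}
    = \iter[\seq]^{-\rexp} + \rexp\, \iter[\diff].
\]
Telescoping from $\runalt = \start$ to $\runalt = \run$ then gives
\[
\next[\seq]^{-\rexp}
    \geq \init[\seq]^{-\rexp} + \rexp \sum_{\runalt = \start}^{\run} \iter[\diff]
    = \init[\seq]^{-\rexp}\parens*{1 + \rexp\, \init[\seq]^{\rexp} \sum_{\runalt = \start}^{\run} \iter[\diff]},
\]
and taking reciprocals and $\rexp$-th roots recovers exactly the statement of the lemma.

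The only subtlety worth flagging is the potential finite-time vanishing of $\curr[\seq]$ (or equivalently, the possibility that $\iter[\diff]\iter[\seq]^{\rexp}$ exceeds $1$ at some step), which is handled cleanly by the preliminary reduction above; beyond that, every step is elementary algebra, so there is no real obstacle.
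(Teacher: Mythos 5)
Your proof is correct. The paper does not supply its own argument for this lemma — it simply cites Polyak's book \textbrac{Pol87, p.~46} — so there is nothing in the text to compare against line by line. Your argument is the classical one and, up to cosmetic rearrangement, is the same route Polyak takes: pass to the sequence $\curr[\seq]^{-\rexp}$, establish the one-step decrement $\next[\seq]^{-\rexp} \geq \curr[\seq]^{-\rexp} + \rexp\,\curr[\diff]$, and telescope. The only stylistic difference worth noting is that you obtain the one-step bound via Bernoulli's inequality $(1-x)^{-\rexp} \geq 1 + \rexp x$, whereas the slightly more direct (and equivalent) route is to use convexity of $x \mapsto x^{-\rexp}$ at the point $\curr[\seq]$, which gives $\next[\seq]^{-\rexp} \geq \curr[\seq]^{-\rexp} - \rexp\,\curr[\seq]^{-\rexp-1}(\next[\seq] - \curr[\seq]) \geq \curr[\seq]^{-\rexp} + \rexp\,\curr[\diff]$ in one line, with no need to first factor out $\curr[\seq]$ or to check that $\curr[\diff]\curr[\seq]^{\rexp} < 1$. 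Your preliminary reduction to the positive case is a genuine necessary step (one cannot take reciprocals otherwise), and you handle it correctly.
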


\begin{proof}
See \cite[p.~46, Lem.~6]{Pol87}.
\end{proof}

The second result that we prove here is \cref{lem:basicnum}, a slight variant of the above lemma, which we restate below for convenience.

\basicnum*


\begin{proof}
By the assumption on $\fn$, there exists some $\eps > 0$ such that
\begin{equation}
\point - 2\coef \point^{1+\rexp}
	\leq \fn(\point)
	\leq \point - \half[\coef] \point^{1+\rexp}
	\quad
	\text{for all $\point \in [0,\eps]$}.
\end{equation}
Note first that, if $\init[\seq] \leq \eps$, \cref{lem:Polyak} readily implies that $\curr[\seq]$ converges to $0$ and that $\curr[\seq] \leq \eps$ for all\;$\run$.
Moreover, if $\eps$ is small enough so that $1 -2\coef \eps^\rexp > 0$ and $\init[\seq]$ is positive, this implies that all $\curr[\seq]$, for $\run = \running$, are positive.
In particular, 
we consider the sequence $\curr[\seq]^{-\rexp}$, $\run = \running$, for which we\;get
\begin{align}
\next[\seq]^{-\rexp} - \curr[\seq]^{-\rexp}
	&= \bracks[\big]{\curr[\seq] - \coef \curr[\seq]^{1+\rexp} + o\parens[\big]{\curr[\seq]^{1+\rexp}}}^{-\rexp}
		- \curr[\seq]^{-\rexp}
	\notag\\
	&= \curr[\seq]^{-\rexp}(1 - \coef \curr[\seq]^{\rexp} + o\left(\curr[\seq]^{\rexp}\right))^{-\rexp}
		- \curr[\seq]^{-\rexp}
	= \rexp \coef + o\left(1\right)\,.
\end{align}
Hence, $\curr[\seq]^{-\rexp} \sim \rexp \coef \run$ which gives the result.
\end{proof}

\subsection{Properties of Bregman divergences and the induced prox-mappings}
We recall some basic properties of the Bregman divergence and the induced prox-mapping.
Variants of these properties are fairly well known in the literature, so we omit their proofs and we refer the interested reader to \cite{BecTeb03,JNT11,MZ19,MLZF+19,DMSV23,MHC24} and references therein for a more detailed discussion.
In particular, \cref{lem:mirror,lem:threepoint,lem:proxlip} correspond to Lemmas B.1, B.2 and B.4(a) of \cite{MLZF+19}, respectively.

\begin{restatable}{lemma}{mirror}
\label{lem:mirror}
Let $\hreg$ be a Bregman regularizer on $\points$ and let $\subsel\hreg$ be a continuous selection of $\subd\hreg$.
Then, for all $\point\in\proxdom$, $\new\in\points$ and $\dvec\in\dpoints$, we have:
\begin{subequations}
\begin{flalign}
\quad
	a)\;\;
	&\subd\hreg(\point)
	= \subsel\hreg(\point)
		+ \pcone(\point)
	\\
\quad
	b)\;\;
	&\new
	= \proxof{\point}{\dvec}
	\iff
\subsel\hreg(\point) + \dvec
	\in \subd\hreg(\new)
	\iff
\subsel\hreg(\new) - \subsel\hreg(\point)
	\in \dvec - \pcone(\new)
\end{flalign}
\end{subequations}
where $\pcone(\point) = \setdef{\dvec\in\dpoints}{\braket{\dvec}{\pointalt - \point} \leq 0 \; \text{for all $\pointalt\in\points$}}$ denotes the polar cone to $\points$ at $\point$.
\end{restatable}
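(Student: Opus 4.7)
The plan is to first establish (a) by combining a direct polarity argument with a classical decomposition theorem for subdifferentials, and then to derive (b) by writing down the Fermat rule for the prox-subproblem and rewriting the optimality condition via (a).

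For (a), the inclusion $\subsel\hreg(\point) + \pcone(\point) \subseteq \subd\hreg(\point)$ is immediate: for $v \in \pcone(\point)$, the polarity inequality $\braket{v}{\pointalt - \point} \leq 0$ (valid for every $\pointalt \in \points$) combines additively with the subgradient inequality at $\subsel\hreg(\point)$ to produce the one at $\subsel\hreg(\point) + v$; outside $\points$ the bound is trivial since $\hreg \equiv +\infty$ there. For the reverse inclusion, the key observation is that any two subgradients of $\hreg$ at $\point$ must coincide when paired with directions in $\tcone_{\points}(\point)$, since the one-sided directional derivative of $\hreg$ into $\points$ is uniquely determined by convexity; the difference therefore lies in the polar of the tangent cone, which is precisely $\pcone(\point)$. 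For a fully rigorous justification I would invoke the classical decomposition theorem for subdifferentials of proper convex functions (\eg \cite[Thm~23.8]{Roc70}).

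For (b), I would expand the prox-objective using the definition of $\breg$: after dropping terms that are constant in $\pointalt$, it reads $F(\pointalt) = \hreg(\pointalt) - \braket{\subsel\hreg(\point) + \dvec}{\pointalt} + \mathrm{const}$. Since $\hreg$ is strictly convex and coercive on $\points$, so is $F$, and the prox-mapping returns the unique minimizer $\new$; the Fermat rule $0 \in \subd F(\new)$ then reads exactly $\subsel\hreg(\point) + \dvec \in \subd\hreg(\new)$, giving the first equivalence. The second equivalence is then pure bookkeeping via part (a) applied at $\new$: the condition $\subsel\hreg(\point) + \dvec \in \subsel\hreg(\new) + \pcone(\new)$ rearranges to $\subsel\hreg(\new) - \subsel\hreg(\point) \in \dvec - \pcone(\new)$.

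The main obstacle is really the reverse inclusion in (a); all other steps are routine applications of the Fermat rule and of the definition of the Bregman divergence. That inclusion hinges on the fact that any two subgradients of a proper convex function at a boundary point of its domain must agree on tangent directions into the domain, a classical but nontrivial fact that we would either cite from \cite{Roc70} or derive directly from strong convexity of $\hreg$ together with standard polarity between tangent and normal cones.
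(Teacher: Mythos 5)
The paper itself does not prove this lemma; it simply cites \cite{BecTeb03,JNT11,MZ19}, so there is no ``paper proof'' to compare against. Your part (b) is fine: the expansion of the prox objective, Fermat's rule, and the subsequent bookkeeping via part (a) are all correct, and so is the forward inclusion $\subsel\hreg(\point)+\pcone(\point)\subseteq\subd\hreg(\point)$ in part (a).

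However, your argument for the reverse inclusion in (a) does not work as stated, and this is the crux of the lemma. You claim that ``any two subgradients of $\hreg$ at $\point$ must coincide when paired with directions in $\tcone_{\points}(\point)$, since the one-sided directional derivative of $\hreg$ into $\points$ is uniquely determined by convexity.'' That inference is false: the directional derivative is the \emph{supremum} of $\braket{g}{d}$ over $g\in\subd\hreg(\point)$, not a common value. Take $\points=\R$ and $\hreg(\point)=\tfrac{1}{2}\point^{2}+\abs{\point}$; this is proper, lsc, strictly convex and $1$-strongly convex, yet $\subd\hreg(0)=[-1,1]$ while $\pcone(0)=\{0\}$, and the two subgradients $\pm1$ pair differently with the tangent direction $d=1$. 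Citing \cite[Thm~23.8]{Roc70} (a sum rule for subdifferentials) does not repair this, since you are not handed a decomposition of $\hreg$ into a smooth part plus $\delta_{\points}$. What rescues the statement is exactly the hypothesis your argument never uses: the \emph{continuity} of the selection $\subsel\hreg$ on $\proxdom$. In the example above no continuous selection exists, so it is not a Bregman regularizer in the paper's sense.

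The correct argument for the reverse inclusion goes via monotonicity plus continuity. Let $g\in\subd\hreg(\point)$ and take any $\pointalt\in\relint\points$; for $t\in(0,1)$ the point $\point_{t}=(1-t)\point+t\pointalt$ lies in $\relint\points\subseteq\proxdom$, so $\subsel\hreg(\point_{t})$ is defined and monotonicity of $\subd\hreg$ gives
\begin{equation*}
0\leq\braket{\subsel\hreg(\point_{t})-g}{\point_{t}-\point}
  = t\,\braket{\subsel\hreg(\point_{t})-g}{\pointalt-\point},
\end{equation*}
\ie $\braket{g-\subsel\hreg(\point_{t})}{\pointalt-\point}\leq0$. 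Sending $t\to0^{+}$ and using continuity of $\subsel\hreg$ yields $\braket{g-\subsel\hreg(\point)}{\pointalt-\point}\leq0$ for all $\pointalt\in\relint\points$, hence for all $\pointalt\in\points$ by density, which is precisely $g-\subsel\hreg(\point)\in\pcone(\point)$. Once (a) is established this way, your derivation of (b) goes through unchanged.
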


\begin{restatable}[$3$-point identity]{lemma}{threepoint}
\label{lem:threepoint}
For all $\base\in\points$ and all $\point,\new\in\proxdom$, we have:
\begin{equation}
\label{eq:threepoint}
\breg(\base,\new)
	= \breg(\base,\point)
		+ \breg(\point,\new)
		+ \braket{\subsel\hreg(\new) - \subsel\hreg(\point)}{\point - \base}
\end{equation}
\end{restatable}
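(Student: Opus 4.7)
The plan is to prove this identity by direct expansion of the three Bregman divergences involved, using only the definition $\breg(\base,\point) = \hreg(\base) - \hreg(\point) - \braket{\subsel\hreg(\point)}{\base - \point}$ from \cref{def:Bregman}. Since $\point, \new \in \proxdom$, the continuous selection $\subsel\hreg$ is defined at both points, so all three quantities $\breg(\base,\new)$, $\breg(\base,\point)$, $\breg(\point,\new)$ are meaningful, and the identity becomes a purely algebraic statement about their defining expressions.

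Concretely, I would first write out $\breg(\base,\point) + \breg(\point,\new)$ by expanding each term; the $\hreg(\point)$ contributions cancel, leaving $\hreg(\base) - \hreg(\new) - \braket{\subsel\hreg(\point)}{\base - \point} - \braket{\subsel\hreg(\new)}{\point - \new}$. Then I would subtract this from $\breg(\base,\new) = \hreg(\base) - \hreg(\new) - \braket{\subsel\hreg(\new)}{\base - \new}$, at which point the $\hreg(\base)$ and $\hreg(\new)$ terms cancel and only inner products remain.

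The remainder is a one-line manipulation: grouping the coefficients of $\subsel\hreg(\new)$ gives $\braket{\subsel\hreg(\new)}{-(\base - \new) + (\point - \new)} = \braket{\subsel\hreg(\new)}{\point - \base}$, and the coefficient of $\subsel\hreg(\point)$ contributes $\braket{\subsel\hreg(\point)}{\base - \point} = -\braket{\subsel\hreg(\point)}{\point - \base}$. Combining these yields exactly $\braket{\subsel\hreg(\new) - \subsel\hreg(\point)}{\point - \base}$, which is the claimed identity.

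There is no genuine obstacle here: the statement is a standard bookkeeping identity that holds for any function $\hreg$ admitting a selection from its subdifferential, with no appeal to strong convexity, Lipschitz properties, or any finer structure. The only minor care needed is in tracking signs when rewriting $(\base - \new)$ as $(\base - \point) + (\point - \new)$ and in verifying that the inner-product terms regroup into a single pairing against $\point - \base$. Given the simplicity, the proof will fit in a few displayed equations, and I would present it as a direct verification with one align environment carrying out the cancellation.
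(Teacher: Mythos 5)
Your proposal is correct, and the algebra checks out: after expanding the three divergences from \cref{def:Bregman}, the $\hreg(\point)$ terms cancel in $\breg(\base,\point)+\breg(\point,\new)$, then $\hreg(\base)$ and $\hreg(\new)$ cancel against $\breg(\base,\new)$, and the surviving pairings regroup into $\braket{\subsel\hreg(\new)-\subsel\hreg(\point)}{\point-\base}$. Note that the paper itself does not supply a proof of this lemma \textendash\ it is grouped with the other ``well-known'' Bregman facts and deferred to the cited references \textendash\ so your direct expansion is exactly the standard verification one would find there, and there is no competing approach to compare against.
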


\begin{restatable}[Non-expansiveness]{lemma}{proxlip}
\label{lem:proxlip}
For all $\point\in\proxdom$ and all $\dvec,\new[\dvec]\in\dpoints$ we have:
\begin{equation}
	\norm{\proxof{\point}{\new[\dvec]} - \proxof{\point}{\dvec}}
	\leq \dnorm{\new[\dvec] - \dvec}
\end{equation}
\end{restatable}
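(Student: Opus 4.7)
The plan is to combine the variational characterization of the prox-mapping (as in \cref{lem:mirror}) with the $1$-strong convexity of $\hreg$. Concretely, write $\new_{1} = \proxof{\point}{\dvec}$ and $\new_{2} = \proxof{\point}{\new[\dvec]}$. By \cref{lem:mirror}(b), there exist polar vectors $\dbase_{1} \in \pcone(\new_{1})$ and $\dbase_{2} \in \pcone(\new_{2})$ such that
\begin{equation*}
\subsel\hreg(\new_{1}) - \subsel\hreg(\point) = \dvec - \dbase_{1},
\qquad
\subsel\hreg(\new_{2}) - \subsel\hreg(\point) = \new[\dvec] - \dbase_{2}.
\end{equation*}

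Subtracting these two identities and pairing with $\new_{2} - \new_{1}$ yields
\begin{equation*}
\braket{\subsel\hreg(\new_{2}) - \subsel\hreg(\new_{1})}{\new_{2} - \new_{1}}
 = \braket{\new[\dvec] - \dvec}{\new_{2} - \new_{1}}
 + \braket{\dbase_{1} - \dbase_{2}}{\new_{2} - \new_{1}}.
\end{equation*}
The polar cone contributions satisfy $\braket{\dbase_{1}}{\new_{2}-\new_{1}} \leq 0$ and $\braket{\dbase_{2}}{\new_{1}-\new_{2}} \leq 0$ (by the defining inequality of $\pcone$ applied to $\new_{2}\in\points$ and $\new_{1}\in\points$ respectively), so the bracket $\braket{\dbase_{1} - \dbase_{2}}{\new_{2} - \new_{1}}$ is nonpositive and may be dropped.

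For the \ac{LHS}, the $1$-strong convexity requirement \eqref{eq:hstr} gives, after summing the inequality and its swap between $\new_{1}$ and $\new_{2}$, the monotonicity estimate $\braket{\subsel\hreg(\new_{2}) - \subsel\hreg(\new_{1})}{\new_{2} - \new_{1}} \geq \norm{\new_{2} - \new_{1}}^{2}$. Combining this with the previous display and applying Cauchy\textendash Schwarz to the right-hand side yields
\begin{equation*}
\norm{\new_{2} - \new_{1}}^{2}
 \leq \braket{\new[\dvec] - \dvec}{\new_{2} - \new_{1}}
 \leq \dnorm{\new[\dvec] - \dvec} \cdot \norm{\new_{2} - \new_{1}},
\end{equation*}
from which the claimed bound follows by dividing by $\norm{\new_{2}-\new_{1}}$ (the case $\new_{1}=\new_{2}$ is trivial).

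The only delicate step is bookkeeping the polar cone terms: one must remember that Lemma A.2(b) produces the \emph{correct} sign $\subsel\hreg(\new_{i}) - \subsel\hreg(\point) \in \dvec_{i} - \pcone(\new_{i})$ so that $\dbase_{i} \in \pcone(\new_{i})$ pairs correctly against $\new_{j}-\new_{i}$ for $j\neq i$. Once this sign convention is fixed, the rest is a direct consequence of monotonicity of $\subsel\hreg$ and Cauchy\textendash Schwarz.
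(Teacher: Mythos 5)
Your argument is correct and is the standard one (optimality conditions from \cref{lem:mirror}(b), monotonicity of $\subsel\hreg$ via $1$-strong convexity, then the dual-norm pairing bound), which is precisely the route taken in the references \cite{BecTeb03,JNT11,MZ19} that the paper cites in lieu of a proof. One small terminological nit: the final estimate $\braket{\new[\dvec]-\dvec}{\new_{2}-\new_{1}}\le\dnorm{\new[\dvec]-\dvec}\,\norm{\new_{2}-\new_{1}}$ is the definition of the dual norm (a H\"older-type bound) rather than Cauchy--Schwarz, and one should note in passing that $\new_{1},\new_{2}\in\proxdom$ (guaranteed by \cref{lem:mirror}(b)) so that \eqref{eq:hstr} can indeed be applied in both directions.
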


The next two results that we provide consider the evolution of the Bregman divergence before and after a prox step (or two);
they are both adapted from \cite[Proposition B.3]{MLZF+19}, with the added proviso that $\hreg$ is assumed to be $1$-strongly convex on $\nhdalt$ (as per \cref{rem:Bregman}).

\begin{lemma}
\label{lem:onestep-app}
Let $\new = \proxof{\point}{\dvec}$ for $\point\in\proxdom$, $\dvec\in\dpoints$ such that $\new$ is still in $\nhdalt$.
Then, for all $\base\in\points$, 
$\dbase\in\pcone(\base)$, we have:\!\!
\begin{subequations}
\begin{align}
\breg(\base,\new)
	&\leq \breg(\base,\point)
		+ \braket{\dvec - \dbase}{\new - \base}
		- \breg(\new,\point)
		\\
	&\leq \breg(\base,\point)
		+ \braket{\dvec - \dbase}{\point - \base}
		+ \tfrac{1}{2} \dnorm{\dvec - \dbase}^{2}\,.
\end{align}
\end{subequations}
\end{lemma}

\begin{proof}
Our proof follows \cite[Proposition B.3]{MLZF+19}, but with a slight modification to account for the extra term 
with $\dbase\in\pcone(\base)$.
The first step is to invoke the three-point identity \eqref{eq:threepoint} to write
\begin{equation}
\breg(\base,\point)
	= \breg(\base, \new)
		+ \breg(\new, \point)
		+ \braket{\subsel\hreg(\point) - \subsel\hreg(\new)}{\new - \base}.
\end{equation}
Then, after rearranging to isolate $\breg(\base,\new)$, we get
\begin{align}
\breg(\base,\new)
	&= \breg(\base, \point)
		- \breg(\new, \point)
		- \braket{\subsel\hreg(\point) - \subsel\hreg(\new)}{\new - \base}
	\notag\\
	&\leq \breg(\base,\point)
		- \breg(\new, \point)
		+ \braket{\dvec}{\new - \base}
\end{align}
where the inequality in the last line follows from \cref{lem:mirror}.
Hence, given that $\braket{\dbase}{\new - \base} \leq 0$ by the fact that $\dbase\in\pcone(\base)$, we readily obtain
\begin{equation}
\label{eq:onestep-proof}
\breg(\base, \new)
	\leq  \breg(\base, \point)
		- \breg(\new, \point)
		+ \braket{\dvec - \dbase}{\new - \base}\,.
\end{equation}

For the second inequality of the lemma, note that 
\begin{align}
\braket{\dvec - \dbase}{\new - \base}
	&= \braket{\dvec - \dbase}{\point - \base}
		+ \braket{\dvec - \dbase}{\new - \point}
	\notag\\
	&\leq \braket{\dvec - \dbase}{\point - \base}
		+ \tfrac{1}{2} \dnorm{\dvec - \dbase}^{2}
		+ \tfrac{1}{2} \norm{\new - \point}^{2}
	\notag\\
	&\leq \braket{\dvec - \dbase}{\point - \base}
		+ \tfrac{1}{2} \dnorm{\dvec - \dbase}^{2}
		+ \breg(\new,\point)
\end{align}
where the penultimate inequality follows directly from Young's inequality and the last one from \eqref{eq:Breg-lower} and the fact that $\point$ and $\new$ both lie in $\nhdalt$.
Our assertion is then obtained by combining this last bound with \eqref{eq:onestep-proof}.
\end{proof}

\begin{lemma}
\label{lem:twostep-app}
Let $\new_{i} = \proxof{\point}{\dvec_{i}}$ for some $\point\in \nhdalt \cap \proxdom$ and $\dvec_{i}\in\dpoints$ such that $\new_i \in \nhdalt$, $i=1,2$.
Then, for all $\base\in\points$ and all $\dbase\in\pcone(\base)$, we have:
\begin{equation}
\breg(\base,\new_{2})
	\leq \breg(\base,\point)
		+ \braket{\dvec_{2} - \dbase}{\new_{1} - \base}
		+ \tfrac{1}{2} \dnorm{\dvec_{2} - \dvec_{1} - \dbase}^{2}
		- \tfrac{1}{2} \norm{\new_{1} - \point}^{2}.
\end{equation}
\end{lemma}

\begin{proof}
Our proof follows \citep[Proposotion B.4]{MLZF+19}, again with a slight modification to account for the extra terms with 
$\dbase\in\pcone(\base)$.
Specifically, applying \cref{lem:onestep} with $\new_{2} = \mprox_\point(\dpoint_{2})$ and $\dbase \in \pcone(\base)$ gives
\begin{align}
\label{eq:twostep-proof}
\breg(\base,\new_{2})
	&\leq \breg(\base,\point)
		+ \braket{\dpoint_{2} - \dbase}{\new_{2} - \base}
		- \breg(\new_{2}, \point)
	\notag\\
	&\leq \breg(\base, \point)
		+ \braket{\dpoint_{2} - \dbase}{\new_{1} - \base}
		+ \braket{\dpoint_{2} - \dbase}{\new_{2} - \new_{1}}
		- \breg(\new_{2},\point)
\end{align}
To lower bound $\breg(\new_{2}, \point)$, we use again \cref{lem:onestep} with $\base \gets \new_{2}$ and $\new_{1} = \mprox_\point(\dpoint_{1})$.
This readily gives
\begin{equation}
\breg(\new_{2},\new_{1})
	\leq \breg(\new_{2},\point)
		+ \braket{\dpoint_{1}}{\new_{1} - \new_{2}}
		- \breg(\new_{1},\point)
\end{equation}
and hence,
after rearranging the above to isolate $\breg(\new_{2},\point)$ and substituting the resulting bound in \eqref{eq:twostep-proof}, we get
\begin{equation}
\breg(\base,\new_{2})
	\leq \breg(\base,\point)
		+ \braket{\dpoint_{2} - \dbase}{\new_{1} - \base}
		+ \braket{\dpoint_{2} - \dpoint_{1} - \dbase}{\new_{2} - \new_{1}} 
		- \breg(\new_{2},\new_{1})
		- \breg(\new_{1}, \point).
\end{equation}
Thus, by Young's inequality and the local strong convexity of $\hreg$, we finally obtain
\begin{align}
\breg(\base,\new_{2})
	&\leq \breg(\base, \point)
		+ \braket{\dpoint_{2} - \dbase}{\new_{1} - \base}
		+ \tfrac{1}{2}\dnorm{\dpoint_{2} - \dpoint_{1} - \dbase}^{2}
	\notag\\
	&\qquad
		+ \tfrac{1}{2}\norm{\new_{2} - \new_{1}}^{2}
		- \tfrac{1}{2}\norm{\new_{2}-\new_{1}}^{2}
		- \tfrac{1}{2}\norm{\new_{1}-\point}^{2}
	\notag\\
	&\leq \breg(\base,\point)
		+ \braket{\dpoint_{2} - \dbase}{\new_{1} - \base}
			+ \tfrac{1}{2}\dnorm{\dpoint_{2} - \dpoint_{1} - \dbase}^{2}
			- \tfrac{1}{2}\norm{\new_{1}-\point}^{2}
\end{align}
and our proof is complete.
\end{proof}

\subsection{Legendre exponent for interior points}

We now proceed to provide a more formal footint to our discussion in \Cref{sec:Legendre} regarding the fact that $\legof{\base}=0$ whenever $\base$ is an interior point.
The formal statement is as follows.

\begin{lemma}
\label{lem:Leg-proxdom}
Suppose that $\subsel\hreg$ is locally Lipschitz continuous.
Then $\legof{\base} = 0$ for all $\base \in \proxdom$;
in particular, $\legof{\base} = 0$ whenever $\base\in\relint\points$.
\end{lemma}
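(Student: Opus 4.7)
My plan is to extract a quadratic upper bound $\breg(\base,\point) \leq L\norm{\base-\point}^{2}$ directly from two applications of the subgradient inequality for $\hreg$, which will then force $\legof{\base} = 0$ via the definition \eqref{eq:Legendre}. Importantly, since both $\base$ and (for $\point$ near $\base$) $\point$ lie in $\proxdom$, the selections $\subsel\hreg(\base)$ and $\subsel\hreg(\point)$ are genuine subgradients at those points, and I will not need any integration along the segment $[\base,\point]$.

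Concretely, I would fix $\base \in \proxdom$ and, using the local Lipschitz hypothesis, pick a neighborhood $\nhd$ of $\base$ in $\proxdom$ and $L > 0$ such that $\dnorm{\subsel\hreg(\point) - \subsel\hreg(\base)} \leq L \norm{\point - \base}$ for all $\point \in \nhd$. The subgradient inequality applied at $\base$ gives $\hreg(\point) \geq \hreg(\base) + \braket{\subsel\hreg(\base)}{\point-\base}$ for every $\point \in \points$, which rearranges to $\hreg(\base) - \hreg(\point) \leq \braket{\subsel\hreg(\base)}{\base - \point}$. Substituting this into the definition \eqref{eq:Breg} of the Bregman divergence yields, for all $\point \in \nhd$,
\[
\breg(\base,\point)
    \leq \braket{\subsel\hreg(\base) - \subsel\hreg(\point)}{\base - \point}
    \leq \dnorm{\subsel\hreg(\base) - \subsel\hreg(\point)} \cdot \norm{\base - \point}
    \leq L\,\norm{\base - \point}^{2},
\]
where the middle step is Cauchy--Schwarz. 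Taking $\legexp = 0$ in \eqref{eq:Legendre}, the $\limsup$ under the infimum is then finite, so $\legof{\base} \leq 0$; combined with $\legof{\base} \in [0,1]$ by construction, equality $\legof{\base} = 0$ follows.

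For the ``in particular'' clause, I would invoke the classical fact that a proper, lower semi-continuous convex function with effective domain $\points$ has a non-empty subdifferential at every point of $\relint\points$ (see, \eg, \cite[Thm.~23.4]{Roc70}), so that $\relint\points \subseteq \proxdom$; the first part of the lemma then immediately applies to any $\base \in \relint\points$.

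The only subtlety I anticipate is purely about the scope of the hypothesis: the local Lipschitz condition on $\subsel\hreg$ has to hold on a neighborhood of $\base$ \emph{inside $\proxdom$}, so that $\subsel\hreg(\point)$ is actually defined for all $\point$ close to $\base$. Because the argument uses only pointwise subgradient inequalities at $\base$ and at $\point$ rather than an integral representation of $\hreg(\base) - \hreg(\point)$, I avoid the usual technical issue of having to guarantee that the whole segment $[\base,\point]$ lies in $\proxdom$.
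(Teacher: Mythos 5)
Your proof is correct and takes essentially the same route as the paper: you use the subgradient inequality at $\base$ to bound $\hreg(\base) - \hreg(\point)$, substitute into \eqref{eq:Breg} to get $\breg(\base,\point) \leq \braket{\subsel\hreg(\base)-\subsel\hreg(\point)}{\base-\point}$, and close with the dual-norm bound and the local Lipschitz constant. The remark about restricting $\point$ to a neighborhood inside $\proxdom$ is the same restriction the paper makes (``for all $\point\in\legnhd\cap\proxdom$''), and is automatically compatible with \cref{def:Legendre} since $\breg(\base,\point)$ is only defined there.
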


\begin{proof}
Fix some $\base \in \proxdom$ and suppose that $\subsel\hreg$ is locally Lipschitz continuous.
Then there exists a neighborhood $\legnhd$ of $\base$ in $\points$ and some $\legconst > 0$ such that
\begin{equation}
\label{eq:hLip}
\dnorm{\nabla \hreg(\base) - \nabla \hreg(\point)}
	\leq \legconst \norm{\base - \point}
	\quad
	\text{for all $\point \in \legnhd \cap \proxdom$}.
\end{equation}
Now, since $\nabla \hreg(\base) \in \subd \hreg(\base)$,
we also have
\begin{align}
\breg(\base,\point)
	= \hreg(\base) - \hreg(\point) - \braket{\nabla \hreg(\point)}{\base - \point}
	&\leq \braket{\nabla \hreg(\base) - \nabla \hreg(\point)}{\base - \point}
	\notag\\
	&\leq \dnorm{\nabla \hreg(\base) - \nabla \hreg(\point)} \norm{\base - \point}
	\leq \legconst \norm{\base - \point}^{2}
\end{align}
for all $\point\in\legnhd \cap \proxdom$.
This shows that \eqref{eq:Breg-local} holds with $\legexp=0$, \ie $\legof{\base}=0$.
\end{proof}

\subsection{A separation result}

We now proceed to prove \cref{lem:separation}, which we restate below for convenience:

\separation*


\begin{proof}
Our claim is trivial if $\coords = \actcoords$, so we will focus exclusively on the case $\coords \subsetneq \actcoords$.
The stated constant $\polycst = \polycst(\mat, \cvec, \sol)$ will then be obtained as the maximum of $1$ and the constants we obtain for each possible $\coords \subsetneq \actcoords$.
 
The proof consists in discussing whether there exists 
 $(\coef_\coord)_{\coord \in \actcoords\setminus\coords} \in (\R_{+})^{\actcoords\setminus \coords}$ \emph{not all zero} and $(\coefalt_\coord)_{\coord \in \coords} \in \R^{\coords}$ such that the inclusion
\begin{align}
\points
	\subseteq \left\{\point \in \R^\nCoords : \sum_{\coord \in \actcoords \setminus\coords} \coef_\coord \point_\coord =\sum_{\coord \in \coords}\coefalt_\coord \point_\coord \right\} \label{eq:inclusion}
\end{align}
holds.
\Cref*{itm:inactive} considers the case when such coefficients exist, while \cref*{itm:active} considers when this is not possible.

\para{\cref*{itm:inactive}}
Assume that there exists $(\coef_\coord)_{\coord \in \actcoords\setminus\coords} \in (\R_{+})^{\actcoords\setminus \coords}$ \emph{not all zero} and $(\coefalt_\coord)_{\coord \in \coords} \in \R^{\coords}$ such that \eqref{eq:inclusion} holds.
In this case, $\coords$ must be non-empty since otherwise $\points$ would be reduced to $\{0\}$ (see the first inclusion), violating the definition \eqref{eq:polyhedron} of $\points$.
In addition, there is some $\coord \in \actcoords \setminus \coords$ such that $\coef_\coord > 0$ and thus we have
\begin{align}
 \forall \point \in \points,\,
\point_\coord \leq \frac{\max\left(|\coef_\coordalt|: \coordalt \in \coords\right)}{\coef_\coord}\max(\point_\coordalt : \coordalt \in \coords)\,
\end{align}
which corresponds to the first case of the lemma.

\para{\cref*{itm:active}}
For all $(\coef_\coord)_{\coord \in \actcoords\setminus\coords} \in (\R_{+})^{\actcoords\setminus \coords}$ \emph{not all zero} and $(\coefalt_\coord)_{\coord \in \coords} \in \R^{\coords}$, \eqref{eq:inclusion} does not hold.
To interpret this situation, we use the fact that $\points$ is of the general polyhedral form \eqref{eq:polyhedron} so $\aff \points = \sol + \ker \mat$  and $\sol$ always satisfies $\sum_{\coord \in \actcoords \setminus\coords} \coef_\coord \sol_\coord =\sum_{\coord \in \coords}\coefalt_\coord \sol_\coord = 0$ so that
\begin{align}
\eqref{eq:inclusion}
	&\iff \aff \points \subset\!\left\{\point \in \R^\nCoords\!:\!\!\sum_{\coord \in \actcoords \setminus\coords} \coef_\coord \point_\coord =\sum_{\coord \in \coords}\coefalt_\coord \point_\coord \right\}
	\notag\\
	&\iff \ker \mat \subset\!\left\{\point \in \R^\nCoords\!:\!\!\sum_{\coord \in \actcoords \setminus\coords} \coef_\coord \point_\coord =\sum_{\coord \in \coords}\coefalt_\coord \point_\coord \right\}
	\notag\\
	&\iff \sum_{\coord \in \actcoords \setminus \coords} \coef_\coord \bvec_\coord - \sum_{\coord \in \coords} \coefalt_\coord \bvec_\coord \in \row(\mat).
\end{align}
Therefore, the fact that  \eqref{eq:inclusion} does not hold for all $(\coef_\coord)_{\coord \in \actcoords\setminus\coords} \in (\R_{+})^{\actcoords\setminus \coords}$ \emph{not all zero} and $(\coefalt_\coord)_{\coord \in \coords} \in \R^{\coords}$ means that,
the system,
\begin{align}
\sum_{\coord \in \actcoords \setminus \coords} \coef_\coord \bvec_\coord = \sum_{\coord \in \coords} \coefalt_\coord \bvec_\coord + \mat^{\top} \pvecalt\,,
\end{align}
with variables
$
(\coef_\coord)_{\coord \in \actcoords \setminus \coords} \in (\R_{+})^{\actcoords \setminus \coords} \text{not all zero, } (\coefalt_\coord)_{\coord \in\coords} \in \R^{\coords}, \pvecalt \in \R^{\nConstr},
$ has no solution.
Hence, by Motzkin's theorem on the alternative (see \eg \cite[\S1.4.2]{BN01})\footnote{With the notations of \cite[\S1.4.2]{BN01}, the lines of the matrix $S$ are made of the $\bvec_\coord$ for $\coord \in \actcoords \setminus \coords$ and the lines of the matrix $N$ are the $\bvec_\coord$ for $\coord \in \coords$, $-\bvec_\coord$ for $\coord \in \coords$, the lines of $\mat$ and their opposite.}, this means that the system
\begin{equation}
\begin{cases}
	\pvec_{\coord} > 0
		&\quad
		\text{for $\coord \in \actcoords \setminus \coords$}
		\\
	\pvec_{\coord} = 0
		&\quad
		\text{for $\coord \in \actcoords$}
		\\
	\mat\pvec = 0
\end{cases}
\end{equation}
admits a solution $\pvec \in \R^{\nCoords}$.
Rescaling 
$\pvec$ and setting $\polycst$ to $\max(\norm{\pvec}, \norm{\pvec}_\infty)$ then gives the second case.
\end{proof}

\section{Omitted calculations}
\label{app:ex}
In this appendix, we provide some computational details that were left out of the main text to streamline our presentation.

\begin{example}[name=Hellinger distance,continues=ex:Hell]
We proceed to compute the Taylor expansion of $\fixmap$ near $\sol = -1$ for the shifted operator $\vecfield(\point) = \point+1$.
Indeed, in this case, the fixed point operator $\fixmap$ is given by
\begin{align}
\label{eq:app-fixmap}
\fixmap(\point)
	= \proxof{\point}{-\step \vecfield(\point)}
	&= \proxof{\point}{-\step (\point+1)}
	\notag\\
	&= \frac{\point - \step(\point+1)\sqrt{1-\point^{2}}}{\sqrt{1-\point^{2} + (\point -\step(\point+1)\sqrt{1-\point^{2}})^{2}}}
	\notag\\
	&= \frac{\fixmapalt(\point)}{\sqrt{1-\point^{2} +\fixmapalt(\point)^{2}}},
\end{align}
with $\fixmapalt(\point) = \point - \step(\point+1)\sqrt{1-\point^{2}}$.
Now, the behavior of $\fixmapalt$ near $\sol = -1$ can be approximated as
\begin{align}
\label{eq:app-fixmapalt}
\fixmapalt(\point)
	&=\point - \step(\point+1)^{3/2}(1-\point)^{1/2}
	\notag\\
	&= -1 + (\point+1) - \step(\point+1)^{3/2}(2 - (\point+1))^{1/2}
	\notag\\
	&= -1 + (\point+1) - \sqrt 2 \step(\point+1)^{3/2}\left(1 - \tfrac{1}{4}\parens*{\point+1} + o(\point+1)\right)
	\notag\\
	&= -1 + (\point+1) - \sqrt 2 \step(\point+1)^{3/2} + \tfrac{\sqrt 2 \step}{4}(\point+1)^{5/2} + o \parens*{(\point+1)^{5/2}}\,.
\end{align}
Another Taylor expansion then yields
\begin{align}
\fixmapalt(\point)^{2}
	&= \parens*{1 - (\point+1) + \sqrt 2 \step(\point+1)^{3/2} - \tfrac{\sqrt 2 \step}{4}(\point+1)^{5/2} + o \parens*{(\point+1)^{5/2}}}^{2}
	\notag\\
	&= 1 - 2(\point+1) + 2\sqrt 2 \step(\point+1)^{3/2} + (\point+1)^{2} - \tfrac{\sqrt 2 \step}{2}(\point+1)^{5/2} + o \parens*{(\point+1)^{5/2}}
\end{align}
so the denominator of \cref{eq:app-fixmap} becomes
\begin{align}
\sqrt{1-\point^{2} +\fixmapalt(\point)^{2}}
	&= \parens*{(\point+1)(2 - (\point+1) + \fixmapalt(\point)^{2}}^{2}
	\notag\\
	&= \parens*{1 + 2\sqrt 2 \step(\point+1)^{3/2} - \tfrac{\sqrt 2 \step}{2}(\point+1)^{5/2} + o \parens*{(\point+1)^{5/2}}}^{2}
	\notag\\
	&= 1 + \sqrt 2 \step(\point+1)^{3/2} - \tfrac{\sqrt 2 \step}{4}(\point+1)^{5/2} + o \parens*{(\point+1)^{5/2}}\,.
\end{align}
Thus, plugging this expansion and \cref{eq:app-fixmapalt} into \cref{eq:app-fixmap} gives
\begin{align}
\fixmap(\point)
	&= \frac
		{-1 + (\point+1) - \sqrt 2 \step(\point+1)^{3/2} + \tfrac{\sqrt 2 \step}{4}(\point+1)^{5/2} + o \parens*{(\point+1)^{5/2}}}
	{1 + \sqrt 2 \step(\point+1)^{3/2} - \tfrac{\sqrt 2 \step}{4}(\point+1)^{5/2} + o \parens*{(\point+1)^{5/2}}}
	\notag\\
	&= \parens*{
 -1 + (\point+1) - \sqrt 2 \step(\point+1)^{3/2} + \tfrac{\sqrt 2 \step}{4}(\point+1)^{5/2} + o \parens*{(\point+1)^{5/2}}
 }
	\notag\\
	&\qquad\times \parens*{
1 - \sqrt 2 \step(\point+1)^{3/2} + \frac{\sqrt 2 \step}{4}(\point+1)^{5/2} + o \parens*{(\point+1)^{5/2}}
 }
	\notag\\
&=
		-1
		+ (\point+1)
		- 2\sqrt{2}\step (\point+1)^{5/2} + o\parens*{(\point+1)^{5/2}}\,,
\end{align}
which gives our assertion when $\sol = -1$.
\hfill
\endenv
\end{example}

\begin{example}[name=Three-dimensional simplex,continues=ex:simplex-2d]
We conclude our treatment of the simplex by showing that $\state_{2,\run} \sim \state_{2,\run} / \state_{3,\run} = \Omega(1/\run)$ if $\slack_{2} = 0$ but $\slack_{1} > 0$.
To begin with, we have $\vecfield_{2}(\curr) = \state_{2,\run} = o(1)$ so, arguing as in the first part of the example, we readily get
\begin{equation}
\frac{\state_{1,\run+1}}{\state_{2,\run+1}}
	= \frac{\state_{1,\run}}{\state_{2,\run}}
		\exp(-\step\slack_{1} + o(1)),
\end{equation}
so $\state_{1,\run} / \state_{2,\run}$ converges to $0$ at a geometric rate.
Accordingly, the quantity 
$\state_{3,\run} / \state_{2,\run}$ 
is bounded as
\begin{align}
\frac{\state_{3,\run+1}}{\state_{2,\run+1}}
	&= \frac{\state_{3,\run}}{\state_{2,\run}}
		\exp \parens*{\step \vecfield_{2}(\curr) - \step \vecfield_{3}(\curr)}
	= \frac{\state_{3,\run}}{\state_{2,\run}}
		\exp \parens*{\step \state_{2,\run} - \step (\state_{3,\run} - 1)}
	\notag\\
    &= \frac{\state_{3,\run}}{\state_{2,\run}}
		\exp \parens*{2\step \state_{2,\run} + \step \state_{1,\run}}
	\leq
		\frac{\state_{3,\run}}{\state_{2,\run}}
		\exp \parens*{2\step \frac{\state_{2,\run}}{\state_{3,\run}} + \step \state_{1,\run}}
\end{align}
Now, since both $\frac{\state_{2,\run}}{\state_{3,\run}}$ and $\state_{1,\run}$ go to zero, 
\begin{align}
\frac{\state_{3,\run+1}}{\state_{2,\run+1}}
	&\leq \frac{\state_{3,\run}}{\state_{2,\run}}
		\parens*{1
			+ 2\step \frac{\state_{3,\run}}{\state_{2,\run}}
			+ \step \state_{1,\run} 
			+ o\parens*{2\frac{\state_{3,\run}}{\state_{2,\run}}
			+ \state_{1,\run}}}
	\notag\\
&=
		\frac{\state_{3,\run}}{\state_{2,\run}}
			+2\step
			+o \parens*{1}\,.
\end{align}
since $\state_{1,\run}/\state_{2,\run}$ vanishes as $\run\to\infty$.
Hence, after telescoping, we conclude that $\frac{\state_{3,\run}}{\state_{2,\run}}
	\leq 2 \step \run + o(\run),$ which in turn shows that $\state_{2,\run} \sim \state_{2,\run} / \state_{3,\run} = \Omega(1/\run)$, as claimed.
\hfill
\endenv
\end{example}

\normalsize
\bibliographystyle{siamplain}
\bibliography{bibtex/IEEEabrv,bibtex/Bibliography-PM}

\begin{thebibliography}{10}

\bibitem{ABB04}
{\sc F.~Alvarez, J.~Bolte, and O.~Brahic}, {\em Hessian {Riemannian} gradient
  flows in convex programming}, SIAM Journal on Control and Optimization, 43
  (2004), pp.~477--501.

\bibitem{ABM19}
{\sc K.~Antonakopoulos, E.~V. Belmega, and P.~Mertikopoulos}, {\em An adaptive
  mirror-prox algorithm for variational inequalities with singular operators},
  in NeurIPS '19: Proceedings of the 33rd International Conference on Neural
  Information Processing Systems, 2019.

\bibitem{ABM20}
{\sc K.~Antonakopoulos, E.~V. Belmega, and P.~Mertikopoulos}, {\em Online and
  stochastic optimization beyond {Lipschitz} continuity: {A Riemannian}
  approach}, in ICLR '20: Proceedings of the 2020 International Conference on
  Learning Representations, 2020.

\bibitem{ABM21}
{\sc K.~Antonakopoulos, E.~V. Belmega, and P.~Mertikopoulos}, {\em Adaptive
  extra-gradient methods for min-max optimization and games}, in ICLR '21:
  Proceedings of the 2021 International Conference on Learning Representations,
  2021.

\bibitem{AM21}
{\sc K.~Antonakopoulos and P.~Mertikopoulos}, {\em Adaptive first-order methods
  revisited: {Convex} optimization without {Lipschitz} requirements}, in
  NeurIPS '21: Proceedings of the 35th International Conference on Neural
  Information Processing Systems, 2021.

\bibitem{ACBFS02}
{\sc P.~Auer, N.~Cesa-Bianchi, Y.~Freund, and R.~E. Schapire}, {\em The
  nonstochastic multiarmed bandit problem}, SIAM Journal on Computing, 32
  (2002), pp.~48--77.

\bibitem{AT05}
{\sc A.~Auslender and M.~Teboulle}, {\em Interior projection-like methods for
  monotone variational inequalities}, Mathematical Programming, 104 (2005),
  pp.~39--68.

\bibitem{AIMM21}
{\sc W.~Azizian, F.~Iutzeler, J.~Malick, and P.~Mertikopoulos}, {\em The
  last-iterate convergence rate of optimistic mirror descent in stochastic
  variational inequalities}, in COLT '21: Proceedings of the 34th Annual
  Conference on Learning Theory, 2021.

\bibitem{BBT17}
{\sc H.~H. Bauschke, J.~Bolte, and M.~Teboulle}, {\em A descent lemma beyond
  {Lipschitz} gradient continuity: {First}-order methods revisited and
  applications}, Mathematics of Operations Research, 42 (2017), pp.~330--348.

\bibitem{BecTeb03}
{\sc A.~Beck and M.~Teboulle}, {\em Mirror descent and nonlinear projected
  subgradient methods for convex optimization}, Operations Research Letters, 31
  (2003), pp.~167--175.

\bibitem{BN01}
{\sc A.~Ben-Tal and A.~S. Nemirovski}, {\em Lectures on modern convex
  optimization: Analysis, algorithms, and engineering applications}, MOS-SIAM
  Series on Optimization, SIAM, 2001.

\bibitem{BDX11}
{\sc B.~Birnbaum, N.~R. Devanur, and L.~Xiao}, {\em Distributed algorithms via
  gradient descent for {Fisher} markets}, in EC' 11: Proceedings of the 12th
  ACM Conference on Electronic Commerce, 2011.

\bibitem{Bre67}
{\sc L.~M. Bregman}, {\em The relaxation method of finding the common point of
  convex sets and its application to the solution of problems in convex
  programming}, USSR Computational Mathematics and Mathematical Physics, 7
  (1967), pp.~200--217.

\bibitem{Bub15}
{\sc S.~Bubeck}, {\em Convex optimization: {Algorithms} and complexity},
  Foundations and Trends in Machine Learning, 8 (2015), pp.~231--358.

\bibitem{CT93}
{\sc G.~Chen and M.~Teboulle}, {\em Convergence analysis of a proximal-like
  minimization algorithm using {Bregman} functions}, SIAM Journal on
  Optimization, 3 (1993), pp.~538--543.

\bibitem{CYLM+12}
{\sc C.-K. Chiang, T.~Yang, C.-J. Lee, M.~Mahdavi, C.-J. Lu, R.~Jin, and
  S.~Zhu}, {\em Online optimization with gradual variations}, in COLT '12:
  Proceedings of the 25th Annual Conference on Learning Theory, 2012.

\bibitem{DMSV23}
{\sc B.~Duvocelle, P.~Mertikopoulos, M.~Staudigl, and D.~Vermeulen}, {\em
  Multi-agent online learning in time-varying games}, Mathematics of Operations
  Research, 48 (2023), pp.~914--941.

\bibitem{FP03}
{\sc F.~Facchinei and J.-S. Pang}, {\em Finite-Dimensional Variational
  Inequalities and Complementarity Problems}, Springer Series in Operations
  Research, Springer, 2003.

\bibitem{FT91}
{\sc D.~Fudenberg and J.~Tirole}, {\em Game Theory}, The MIT Press, 1991.

\bibitem{GBVV+19}
{\sc G.~Gidel, H.~Berard, G.~Vignoud, P.~Vincent, and S.~Lacoste-Julien}, {\em
  A variational inequality perspective on generative adversarial networks}, in
  ICLR '19: Proceedings of the 2019 International Conference on Learning
  Representations, 2019.

\bibitem{HUL01}
{\sc J.-B. Hiriart-Urruty and C.~Lemar{\'e}chal}, {\em Fundamentals of Convex
  Analysis}, Springer, Berlin, 2001.

\bibitem{HIMM19}
{\sc Y.-G. Hsieh, F.~Iutzeler, J.~Malick, and P.~Mertikopoulos}, {\em On the
  convergence of single-call stochastic extra-gradient methods}, in NeurIPS
  '19: Proceedings of the 33rd International Conference on Neural Information
  Processing Systems, 2019, pp.~6936--6946.

\bibitem{JNT11}
{\sc A.~Juditsky, A.~S. Nemirovski, and C.~Tauvel}, {\em Solving variational
  inequalities with stochastic mirror-prox algorithm}, Stochastic Systems, 1
  (2011), pp.~17--58.

\bibitem{KSST12}
{\sc S.~M. Kakade, S.~Shalev-Shwartz, and A.~Tewari}, {\em Regularization
  techniques for learning with matrices}, The Journal of Machine Learning
  Research, 13 (2012), pp.~1865--1890.

\bibitem{Kiw97}
{\sc K.~C. Kiwiel}, {\em Proximal minimization methods with generalized
  {Bregman} functions}, SIAM Journal on Control and Optimization, 35 (1997),
  pp.~1142--1168.

\bibitem{Kor76}
{\sc G.~M. Korpelevich}, {\em The extragradient method for finding saddle
  points and other problems}, {\`E}konom. i Mat. Metody, 12 (1976),
  pp.~747--756.

\bibitem{LFN18}
{\sc H.~Lu, R.~M. Freund, and Y.~Nesterov}, {\em Relatively-smooth convex
  optimization by first-order methods and applications}, SIAM Journal on
  Optimization, 28 (2018), pp.~333--354.

\bibitem{Mal15}
{\sc Y.~Malitsky}, {\em Projected reflected gradient methods for monotone
  variational inequalities}, SIAM Journal on Optimization, 25 (2015),
  pp.~502--520.

\bibitem{MHC24}
{\sc P.~Mertikopoulos, Y.-P. Hsieh, and V.~Cevher}, {\em A unified stochastic
  approximation framework for learning in games}, Mathematical Programming, 203
  (2024), pp.~559--609.

\bibitem{MLZF+19}
{\sc P.~Mertikopoulos, B.~Lecouat, H.~Zenati, C.-S. Foo, V.~Chandrasekhar, and
  G.~Piliouras}, {\em Optimistic mirror descent in saddle-point problems:
  {Going} the extra (gradient) mile}, in ICLR '19: Proceedings of the 2019
  International Conference on Learning Representations, 2019.

\bibitem{MS16}
{\sc P.~Mertikopoulos and W.~H. Sandholm}, {\em Learning in games via
  reinforcement and regularization}, Mathematics of Operations Research, 41
  (2016), pp.~1297--1324.

\bibitem{MZ19}
{\sc P.~Mertikopoulos and Z.~Zhou}, {\em Learning in games with continuous
  action sets and unknown payoff functions}, Mathematical Programming, 173
  (2019), pp.~465--507.

\bibitem{MOP20}
{\sc A.~Mokhtari, A.~Ozdaglar, and S.~Pattathil}, {\em A unified analysis of
  extra-gradient and optimistic gradient methods for saddle point problems:
  proximal point approach}, in AISTATS '20: Proceedings of the 23rd
  International Conference on Artificial Intelligence and Statistics, 2020.

\bibitem{Nem04}
{\sc A.~S. Nemirovski}, {\em Prox-method with rate of convergence ${O}(1/t)$
  for variational inequalities with {Lipschitz} continuous monotone operators
  and smooth convex-concave saddle point problems}, SIAM Journal on
  Optimization, 15 (2004), pp.~229--251.

\bibitem{NJLS09}
{\sc A.~S. Nemirovski, A.~Juditsky, G.~Lan, and A.~Shapiro}, {\em Robust
  stochastic approximation approach to stochastic programming}, SIAM Journal on
  Optimization, 19 (2009), pp.~1574--1609.

\bibitem{NY83}
{\sc A.~S. Nemirovski and D.~B. Yudin}, {\em Problem Complexity and Method
  Efficiency in Optimization}, Wiley, New York, NY, 1983.

\bibitem{Nes04}
{\sc Y.~Nesterov}, {\em Introductory Lectures on Convex Optimization: A Basic
  Course}, no.~87 in Applied Optimization, Kluwer Academic Publishers, 2004.

\bibitem{Nes09}
{\sc Y.~Nesterov}, {\em Primal-dual subgradient methods for convex problems},
  Mathematical Programming, 120 (2009), pp.~221--259.

\bibitem{pauwels2023nature}
{\sc E.~Pauwels}, {\em On the nature of bregman functions}, arXiv preprint
  arXiv:2302.02689,  (2023).

\bibitem{Pol87}
{\sc B.~T. Polyak}, {\em Introduction to Optimization}, Optimization Software,
  New York, NY, USA, 1987.

\bibitem{Pop80}
{\sc L.~D. Popov}, {\em A modification of the {Arrow}\textendash{Hurwicz}
  method for search of saddle points}, Mathematical Notes of the Academy of
  Sciences of the USSR, 28 (1980), pp.~845--848.

\bibitem{RS13-NIPS}
{\sc A.~Rakhlin and K.~Sridharan}, {\em Optimization, learning, and games with
  predictable sequences}, in NIPS '13: Proceedings of the 27th International
  Conference on Neural Information Processing Systems, 2013.

\bibitem{Roc70}
{\sc R.~T. Rockafellar}, {\em Convex Analysis}, Princeton University Press,
  Princeton, NJ, 1970.

\bibitem{SS11}
{\sc S.~Shalev-Shwartz}, {\em Online learning and online convex optimization},
  Foundations and Trends in Machine Learning, 4 (2011), pp.~107--194.

\bibitem{SV15}
{\sc E.~Solan and N.~Vieille}, {\em Stochastic games}, Proceedings of the
  National Academy of Sciences, 112 (2015), pp.~13743--13746.

\bibitem{Tsa88}
{\sc C.~Tsallis}, {\em Possible generalization of {Boltzmann}\textendash
  {Gibbs} statistics}, Journal of Statistical Physics, 52 (1988), pp.~479--487.

\bibitem{TRW05}
{\sc K.~Tsuda, G.~R{\"a}tsch, and M.~K. Warmuth}, {\em Matrix exponentiated
  gradient updates for on-line learning and {Bregman} projection}, Journal of
  Machine Learning Research, 6 (2005), pp.~995--1018.

\bibitem{VMF86}
{\sc R.~J. Vanderbei, M.~S. Meketon, and B.~A. Freedman}, {\em A modification
  of {Karmarkar}'s linear programming algorithm}, Algorithmica, 1 (1986),
  pp.~395--407.

\bibitem{WLZL21}
{\sc C.-Y. Wei, C.-W. Lee, M.~Zhang, and H.~Luo}, {\em Linear last-iterate
  convergence in constrained saddle-point optimization}, in ICLR '21:
  Proceedings of the 2021 International Conference on Learning Representations,
  2021.

\end{thebibliography}

\end{document}